\def\version{17/12/2014 version 7
\hfill\href{http://arxiv.org/abs/1309.2323}{arXiv:1309.2323} \\
{\small \textsf{To appear in Publications of the Research 
Institute for Mathematical Sciences, Kyoto University}}}
\def\PO{\text{\pigpenfont R}}
\theoremstyle{plain}
\newtheorem{thm}{Theorem}[section]
\newtheorem{lem}[thm]{Lemma}
\newtheorem{prop}[thm]{Proposition}
\newtheorem{cor}[thm]{Corollary}
\theoremstyle{definition}
\newtheorem{defn}[thm]{Definition}
\numberwithin{equation}{section}
\def\ie{\emph{i.e.}}
\def\:{\colon}
\def\.{\cdot}
\def\<{\left\langle}
\def\>{\right\rangle}
\def\({\left(}
\def\){\right)}
\def\ph#1{\phantom{#1}}
\def\epsilon{\varepsilon}
\def\leq{\leqslant}
\def\geq{\geqslant}
\def\IFF{\quad\Longleftrightarrow\quad}
\def\tilde#1{\widetilde{#1}}
\def\iso{\cong}
\def\CP{\mathbb{C}\mathrm{P}}
\def\F{\mathbb{F}}
\def\N{\mathbb{N}}
\def\R{\mathbb{R}}
\def\RP{\R\mathrm{P}}
\def\Z{\mathbb{Z}}
\DeclareMathOperator{\Ext}{Ext}
\def\id{\mathrm{id}}
\def\op{\mathrm{op}}
\def\phi{\varphi}
\DeclareMathOperator{\Sq}{Sq}
\DeclareMathOperator{\dlQ}{Q}
\def\tQ{\tilde{\dlQ}}
\DeclareMathOperator{\q}{q}
\def\Kriz{{K\v{r}\'{i}\v{z}}}
\DeclareMathOperator{\exc}{exc}
\def\tpsi{\tilde{\psi}}
\def\tPsi{\tilde{\Psi}}
\def\btau{\bar{\tau}}
\def\Comod{\mathbf{Comod}}
\def\DLV{\mathbf{Vect}^{\mathrm{DL}}}
\def\HA{\mathbf{HopfAlg}}
\begin{document}
\title[Power operations and coactions]
{Power operations and coactions in highly commutative
homology theories}
\author{Andrew Baker}
\address{School of Mathematics \& Statistics,
University of Glasgow, Glasgow G12 8QW, Scotland}
\email{a.baker@maths.gla.ac.uk}
\urladdr{http://www.maths.gla.ac.uk/$\sim$ajb}
\subjclass[2010]{Primary 55S12; Secondary 55P42, 55S10}
\keywords{Power operations; coactions; $E_\infty$ and
$H_\infty$ ring spectra}
\thanks{I would like to thank the following for their
interest and helpful comments: Rolf Hoyer, Nick Kuhn,
Tyler Lawson, Peter May, Niko Naumann, Geoffrey Powell,
Birgit Richter, John Rognes and Markus Szymik; also
the referee deserves special thanks for being so
thorough and detecting many errors in formulae. \\[10pt]
This paper is dedicated to the memory of my friend
\textbf{Goro Nishida} (1943--2014), whose pioneering
work on power operations inspired it.
%\begin{CJK}{UTF8}{min}
%The word konnichiwa (こんにちは in hiragana) is a
%formal daytime greeting.
%\end{CJK}
}
\date{\version}
\begin{abstract}
Power operations in the homology of infinite loop spaces,
and $H_\infty$ or $E_\infty$ ring spectra have a long
history in Algebraic Topology. In the case of ordinary
mod~$p$ homology for a prime~$p$, the power operations of
Kudo, Araki, Dyer and Lashof interact with Steenrod operations
via the Nishida relations, but for many purposes this leads 
to complicated calculations once iterated applications of 
these functions are required. On the other hand, the homology 
coaction turns out to provide tractable formulae better suited 
to exploiting multiplicative structure.

We show how to derive suitable formulae for the interaction
between power operations and homology coactions in a wide
class of examples; our approach makes crucial use of modern
frameworks for spectra with well behaved smash products. In
the case of mod~$p$ homology, our formulae extend those of
Bisson and Joyal to odd primes. We also show how to exploit
our results in sample calculations, and produce some apparently
new formulae for the Dyer-Lashof action on the dual Steenrod
algebra.
\end{abstract}

\maketitle

\tableofcontents

\section*{Introduction}

In this note we study the interaction between coactions
over homology Hopf algebroids (such as the Steenrod
algebra for a prime~$p$) and power operations (such as
Dyer-Lashof operations). Some of our results are surely
known, but we are only aware of partial references such
as~\cite{TB&AJ:Nishida,TB&AJ:Qrings} which only deal
with the case of ordinary mod~$2$ homology. In any case,
our approach to understanding this relationship involves
a modern perspective based on a symmetric monoidal
category of spectra with good properties such as that
of~\cite{EKMM}.

The examples we discuss in detail are based on ordinary
mod~$p$ homology for a prime~$p$ and the power operations
originally by Kudo, Araki, Dyer and Lashof, then generalised
by May \emph{et al} \cite{Dyer&Lashof,Kudo&Araki,LNM1176},
usually rather unhistorically referred to as Dyer-Lashof
operations. Studying the interaction between the coaction
and the Dyer-Lashof operations amounts to studying dual
versions of the classical Nishida relations~\cite{Nishida}.
We use knowledge of the coaction of the dual Steenrod
algebra $\mathcal{A}(p)_*$ to investigate the homology
of commutative $S$-algebras $R$ where $\pi_0(R)$ has
characteristic~$p$. Of course such questions were studied
by Steinberger~\cite[chapter~III]{LNM1176}. However, our
approach offers some clarification of the algebra involved
in the Dyer-Lashof action on the dual Steenrod algebra
itself, relating it to work of Kochman~\cite{SOK:DLops}
(see also~\cite{SP:DLops});  our detailed knowledge of
the homology also allows us to give a refined version 
of Steinberger's splitting result giving more information
on the multiplicative structure.

The results of this paper have been used in joint work
with Rolf Hoyer and some were an outcome of discussions
with him.

\subsection*{Notation, etc}
We will use the \emph{floor} and \emph{ceiling} functions
$\lfloor-\rfloor,\,\lceil-\rceil\:\R\to\Z$ taking values
\[
\lfloor x\rfloor = \max\{n\in\Z:n\leq x\},
\quad
\lceil x\rceil = \min\{n\in\Z:n\geq x\}.
\]
In particular, for $x\in\Z$ we have
$\lfloor x\rfloor=\lceil x\rceil=x$, while if $x\notin\Z$,
then $\lceil x\rceil=\lfloor x\rfloor + 1$.

When working with power series $f(t)$ in an indeterminate
$t$, $[f(t)]_{t^n}$ will denote the coefficient of $t^n$
in $f(t)$.

\subsection*{Bimodules}
We will often consider bimodules. If
$\mathrm{R},\mathrm{R}',\mathrm{R}''$ are three rings,
$\mathrm{M}$ is an $\mathrm{R}$-$\mathrm{R}'$-bimodule,
and $\mathrm{N}$ is an $\mathrm{R}'$-$\mathrm{R}''$-bimodule,
then we will denote the tensor product over $\mathrm{R}'$
by $\mathrm{M}\boxtimes_{\mathrm{R}'}\mathrm{N}$. We will
reserve $\otimes_\mathrm{R}$ for the situation where
$\mathrm{R}$ is commutative and $\mathrm{U},\mathrm{V}$
are two left $\mathrm{R}$-modules and denote their tensor
product by $\mathrm{U}\otimes_\mathrm{R}\mathrm{V}$. We
will sometimes consider a \emph{left} $\mathrm{R}_*$-module
$\mathrm{M}_*$ over a graded commutative ring $\mathrm{R}_*$
as having a canonical \emph{right} $\mathrm{R}_*$-module
structure given by
\[
m\. r = (-1)^{|r|\,|m|}rm,
\]
for homogeneous elements $r\in \mathrm{R}_{|r|}$ and
$m\in \mathrm{M}_{|m|}$.

\subsection*{Bigebroids and comodules}

Suppose that $\mathrm{A},\mathrm{B},\mathrm{H}$
are commutative (graded) rings and that
\[
A \xrightarrow{\eta_\mathrm{A}} \mathrm{H}
     \xleftarrow{\eta_\mathrm{B}} \mathrm{B}
\]
are ring homomorphisms. We use these to define
a left $\mathrm{A}$-module structure and a right
$\mathrm{B}$-module structure on $\mathrm{H}$.
Given a right $\mathrm{A}$-module $\mathrm{M}$
and a left $\mathrm{B}$-module $\mathrm{N}$,
we can define the bimodule tensor products
\[
\mathrm{M}\boxtimes_\mathrm{A}\mathrm{H},
\quad
\mathrm{H}\boxtimes_\mathrm{B}\mathrm{N},
\quad
\mathrm{M}\boxtimes_\mathrm{A}\mathrm{H}
           \boxtimes_\mathrm{B}\mathrm{N}.
\]
If $\mathrm{R}$ is a commutative graded ring, then
its \emph{opposite ring} has as its underlying set
$\mathrm{R}^\op = \mathrm{R}$ and multiplication of
homogeneous elements given by
\[
x^\op y^\op = (\pm)(yx)^\op,
\]
where the sign is determined in the usual way in terms
of the degrees of $x,y$. The opposite ring $\mathrm{H}^\op$
admits a right $\mathrm{A}^\op$-module structure and a
left $\mathrm{B}^\op$-module structure and there is a
ring isomorphism
\[
\mathrm{R} \xrightarrow{\iso} \mathrm{R}^\op;
\quad
x \leftrightarrow (\pm)x^\op
\]
which interchange the two pairs of module structures.

\part{Power operations and coactions}\label{part:1}

\section{Extended powers and power operations}\label{sec:ExtPow}

In this section we give some general observations
on extended powers. We will work in the category
$\mathscr{M}_S$ of $S$-modules of~\cite{EKMM} and
write $\wedge$ for $\wedge_S$. For an $S$-module~$M$,
\[
M^{\wedge n} = M\wedge\cdots\wedge M.
\]
For an $S$-module $N$ with a left $\Sigma_n$-action
we will denote the half-smash product by
$E\Sigma_n\ltimes_{\Sigma_n}N$. In particular we will
write
\[
D_nM = E\Sigma_n\ltimes_{\Sigma_n}M^{\wedge n}
\]
for the extended power, and when $G\leq\Sigma_n$,
we will sometimes set
\[
D_GM = E\Sigma_n\ltimes_{G}M^{\wedge n}.
\]
If $M$ is cofibrant then by~\cite[theorem~III.5.1]{EKMM},
the projection of $E\Sigma_n$ to a point induces
a weak equivalence
\[
D_nM = E\Sigma_n\ltimes_{\Sigma_n}M^{\wedge n}
    \xrightarrow{\sim} M^{\wedge n}/\Sigma_n.
\]

More generally, if $R$ is a cofibrant commutative
$S$-algebra, then in the category $\mathscr{M}_R$
of $R$-modules, for an $R$-module $N$ we can define
\[
D^R_nN = E\Sigma_n\ltimes_{\Sigma_n}N^{\wedge_R n},
\]
and if $N$ is a cofibrant $R$-module, the natural
map gives a weak equivalence
\[
D^R_nN = E\Sigma_n\ltimes_{\Sigma_n}N^{\wedge_R n}
           \xrightarrow{\sim} N^{\wedge_R n}/\Sigma_n.
\]
If $M\in\mathscr{M}_S$, there is an isomorphism
\begin{equation}\label{eqn:DnR-RDn}
	R\wedge D_nM \iso D^R_n(R\wedge M).
\end{equation}

Now we recall the definition of power operations.
We will do this in a general setting, for three
commutative $S$-algebras $A,B,E$ (actually, it is
enough to assume that $E$ is an $H_\infty$ ring
spectrum). There is a map $\mu_n\:D_nE \to E$
which induces a diagram of $A$-module morphisms.
\[
\xymatrix{
A\wedge D_nE \ar[rr]^{\;I\wedge\mu_n\;}\ar[dr] && A\wedge E \\
& D^A_n(A\wedge E) \ar[ur]&
}
%A\wedge D_nE \xrightarrow{\;I\wedge\mu_n\;} A\wedge E.
\]
If $x\:S^m\to A\wedge E$, then the composition of solid
arrows in the commutative diagram
\begin{equation}\label{eq:Theta^e-composition}
\xymatrix{
&&&S^k \ar@{.>}@/_10pt/[dlll]_(.5)e\ar@{.>}@/^10pt/[drrr]^(.5){\Theta^e(x)}&&& \\
A\wedge D_nS^m \ar[rr]^(.45){\id\wedge D_nx} \ar@/_27pt/[rrrrrr]_{\tilde{x}}
   && A\wedge D_n(A\wedge E) \ar[rr]^{I\wedge\mu_n} &&
 A\wedge A\wedge E \ar[rr]^(.55){\mathrm{mult}\wedge\id}
              && A\wedge E \\
              &&&&&&
}
\end{equation}
defines a power operation
\begin{equation}\label{eq:Theta^e}
\Theta^e\:A_m(E)\to A_k(E);
\quad
\Theta^e(x) = \tilde{x}_*e
\end{equation}
for each element $e\in A_k(D_nS^m)=\pi_k(A\wedge D_nS^m)$.

\section{Generalised coactions}\label{sec:Coactions}

For any $S$-module $X$, we can use the unit $S\to B$
and switch maps to induce the horizontal morphisms in
the following commutative diagram.
\begin{equation}\label{eq:B-Unit}
\xymatrix{
A\wedge X \ar[r]^(.4){\iso}\ar[d]_{\mathrm{switch}}
  & A\wedge S\wedge X \ar[r]^{\mathrm{unit}}\ar[d]_{\mathrm{switch}}
  & A\wedge B\wedge X\ar[d]^{\mathrm{switch}} \\
X\wedge A \ar[r]^(.4){\iso} & S\wedge X \wedge A \ar[r]^{\mathrm{unit}}
                        & B\wedge X\wedge A
}
\end{equation}
On applying homotopy $\pi_*(-)$ we obtain maps similar
to coactions, at least when suitable flatness conditions
hold.

We make an algebraic assumption: the left $B_*$-module
$B_*(A)=\pi_*(B\wedge A)$ is flat. Then on passing to
homotopy groups we find that there is an isomorphism
of left $B_*$-modules
\[
B_*(X\wedge A) \xrightarrow{\;\iso\;} B_*(X)\otimes_{B_*} B_*(A),
\]
and an isomorphism
\[
A_*(B\wedge X) \xrightarrow{\;\iso\;} A_*(B)\boxtimes_{B_*}B_*(X).
\]
The rightmost switch map induces an isomorphism
\[
A_*(B\wedge X) \xrightarrow{\;\iso\;} B_*(X)\otimes_{B_*} B_*(A)
\]
which converts the left $A_*$-module structure to a right
module structure. These ingredients give the following
commutative diagram.
\[
\xymatrix{
 & \ar@/_15pt/[ld] A_*(X) \ar@/^15pt/[rd] &  \\
 A_*(B)\boxtimes_{B_*}B_*(X)
   &\ar[l]_(.4){\iso} A_*(B\wedge X)\ar[r]^(.4){\iso}
   & B_*(X)\otimes_{B_*} B_*(A)
}
\]
If $A=B$, then in
\begin{equation}\label{eq:Coaction-twist}
\xymatrix{
 & \ar@/_15pt/[ld]_{\psi} A_*(X) \ar@/^15pt/[rd]^{\tpsi} &  \\
 A_*(A)\boxtimes_{A_*}A_*(X)
   &\ar[l]_(.4){\iso} A_*(A\wedge X)\ar[r]^(.4){\iso}
   & A_*(X)\otimes_{A_*} A_*(A)
}
\end{equation}
the homomorphism $\psi$ is the usual left $A_*(A)$-coaction
on $A_*(X)$, while $\tpsi$ is obtained by composing
$\psi$ with the antipode of the Hopf algebroid $A_*(A)$ and
a switch map. In fact $\tpsi$ is a right coaction
making $A_*(X)$ into a right $A_*(A)$-comodule. If we also
take $E=A$, then for each $e\in A_k(D_nS^m)$ there is a
power operation $\Theta^e$ as in~\eqref{eq:Theta^e}, but
also another obtained by interchanging the r\^oles of the
two factors of $A$,
\begin{equation}\label{eq:twist-Theta^e}
\tilde{\Theta}^e = \chi \Theta^e \chi,
\end{equation}
where $\chi\:A_*(A)\to A_*(A)$ is the antipode induced
by the switch map on $A\wedge A$.

The unit $S\to B$ induces the downward morphisms in
the following commutative diagram.
\begin{equation}\label{eq:CoactionDiag}
\xymatrix@C=0.75cm@R=1.0cm{
  A\wedge S\wedge D_n(A\wedge E)\ar[r]^(.5){\iso}
    & A\wedge D_n(S\wedge A\wedge E) \ar[r]\ar[ddd]
    & A\wedge(S\wedge A\wedge E) \ar[d]_{\iso}\ar[r]
    & A\wedge S\wedge E\ar@/^40pt/[ddd] \\
  A\wedge S\wedge D_nS^m \ar[u]^{\id\wedge\id\wedge D_nx}\ar[d]
    && S\wedge(A\wedge A)\wedge E\ar[d]\ar[r]
    & S\wedge A\wedge E\ar[d]\ar[u]^{\iso} \\
  A\wedge B\wedge D_nS^m \ar[d]_{\id\wedge\id\wedge D_nx}
    && B\wedge(A\wedge A)\wedge E\ar[r]
    & B\wedge A\wedge E\ar[d]_{\iso} \\
  A\wedge B\wedge D_n(A\wedge E)\ar[r]^(.5){\iso}
    & A\wedge D_n^B(B\wedge(A\wedge E)) \ar[r]
    & A\wedge B\wedge(A\wedge E)\ar[u]^{\iso}\ar[r]
    & A\wedge B\wedge E
}
\end{equation}
On applying $\pi_*(-)$ to this diagram we obtain
an algebraic analogue.
\begin{equation}\label{eq:CoactionDiag-pi*}
\xymatrix@C=0.75cm@R=1.0cm{
  A_*(S\wedge D_n(A\wedge E))\ar[r]^(.5){\iso}
    & A_*(D_n(S\wedge A\wedge E)) \ar[r]\ar[ddd]
    & A_*(S\wedge A\wedge E) \ar[d]_{\iso}\ar[r]
    & A_*(S\wedge E)\ar@/^40pt/[ddd] \\
  A_*(S\wedge D_nS^m) \ar[u]^{(\id\wedge D_nx)_*}\ar[d]
    && S_*((A\wedge A)\wedge E)\ar[d]\ar[r]
    & S_*(A\wedge E)\ar[d]\ar[u]^{\iso} \\
  A_*(B\wedge D_nS^m) \ar[d]_{(\id\wedge D_nx)_*}
    && B_*((A\wedge A)\wedge E)\ar[r]
    & B_*(A\wedge E)\ar[d]_{\iso} \\
  A_*(B\wedge D_n(A\wedge E))\ar[r]^(.5){\iso}
    & A_*(D_n^B(B\wedge(A\wedge E))) \ar[r]
    & A_*(B\wedge(A\wedge E))\ar[u]^{\iso}\ar[r]
    & A_*(B\wedge E)
}
\end{equation}

When $B=A$ and $A_*(A)$ is $A_*$-flat, $(A_*,A_*(A))$ has
the structure of a Hopf algebroid. For any spectrum~$X$,
the unit $S\to A$ induces a map
\[
A\wedge X\xrightarrow{\;\iso\;}
            A\wedge S \wedge X \to A\wedge A \wedge X,
\]
and there is a left coaction
\[
\psi\:A_*(X) \to A_*(A)\boxtimes_{A_*}A_*(X)
\]
which fits into the following commutative diagram.
\[
\xymatrix{
& A_*(A\wedge X)\ar[dd]^{\iso} \\
A_*(X) \iso A_*(S\wedge X)\ar@/^15pt/[ur]\ar@/_15pt/[dr]_\psi &  \\
& A_*(A)\boxtimes_{A_*}A_*(X)
}
\]
In this situation,~\eqref{eq:CoactionDiag-pi*}
can be used to study the $A_*(A)$-coaction and
its relationship with power operations defined
above. Taking an element
\[
e\in A_*(D_nS^m)\iso A_*(S\wedge D_nS^m)
\]
and chasing it upwards to the right and downwards
to
\[
A_*(A\wedge E)\iso A_*(A)\boxtimes_{A_*}A_*(E)
\]
and then comparing the result with that obtained
by going downwards to the right,
\[
\xymatrix{
&&& A_*(E)\ar[r]^(.46){\iso}\ar[d]^{\psi}\ar@/_58pt/[dd]_{\tpsi}
    & A_*(S\wedge E) \\
A_*(D_nS^m)\ar@/^15pt/[urrr]\ar[drrr]\ar[d]_{\tpsi}
&&& A_*(A)\boxtimes_{A_*}A_*(E)\ar[r]^(.57){\iso}\ar[d]^{\iso}
  & A_*(A\wedge E)             \\
A_*(D_nS^m)\otimes_{A_*}A_*(A)\ar[rrr] &&&
  A_*(E)\otimes_{A_*}A_*(A)\ar[r]^(.57){\iso}
     & A_*(E\wedge A)
}
\]
we obtain the following important formula,
\begin{equation}\label{eq:Coaction-Powop}
\tpsi(\Theta^e(x)) =
 \sum_i(1\otimes\chi(\theta_i))\Theta^{e_i}(\tpsi x),
\end{equation}
where $\psi(e) = \sum_i\theta_i\otimes e_i$.

\section{Further generalisations}\label{sec:Coactions-modules}

The situation of the previous sections can be generalised
somewhat. Suppose that $M$ is a right $A$-module. Then we
can replace the element of $A_k(D_nS^m)$ with $e\in M_k(D_nS^m)$
and use the composition
\begin{equation}\label{eq:Theta^e-composition-module}
\xymatrix{
M\wedge D_nS^m \ar[rr]_(.45){\id\wedge D_nx} \ar@/^20pt/[rrrrrr]^{\tilde{x}}
   && M\wedge D_n(A\wedge E) \ar[rr]_{I\wedge\mu_n} &&
 M\wedge A\wedge E \ar[rr]_(.55){\mathrm{mult}\wedge\id}
              && M\wedge E
}
\end{equation}
to define a power operation
\begin{equation}\label{eq:Theta^e-module}
\Theta^e\:A_m(E)\to M_k(E);
\quad
\Theta^e(x) = \tilde{x}_*e
\end{equation}
analogous to that of~\eqref{eq:Theta^e}.

In order to get a sensible notion of left coaction
$M_*(X)\to M_*(B)\boxtimes_{B_*}B_*(X)$ leading to
analogues of the formulae above in good situations,
it is necessary to assume that $B_*(A)$ is $B_*$-flat,
and also that one of the following conditions holds:
\begin{itemize}
\item
$A_*(X)$ is $A_*$-flat;
\item
$M_*$ is $A_*$-flat as a right $A_*$-module.
\end{itemize}
When $B=A$, the assumptions that $A_*(A)$ is flat
as a left or right $A_*$-module are equivalent,
and in the most important cases in algebraic
topology this holds for any $M$. We leave the
interested reader to work out the details. Such
operations are likely to be hard to work with
unless~$M$ has suitable multiplicative structure
(e.g., it is a commutative $A$-algebra).

One important class of examples is that where
$A=B=E_n$, the $n$-th Lubin-Tate spectrum for
a prime~$p$, and $M=K_n$, the $n$-th Morava
$K$-theory. In this case, $K_n$ is an $E_n$
ring spectrum (not homotopy commutative if~$p=2$);
more generally, we could take $M=E_n\wedge W$,
where~$W$ is a generalised Moore spectrum as
in~\cite{DGD&TL:proRingObj}. The work of the latter 
suggests defining power operations using pro-systems 
of such operations; this is presumably related to 
the work of McClure~\cite[chapter~IX]{LNM1176} on 
power operations in $K$-theory.

\part{Eilenberg-Mac~Lane spectra and Dyer-Lashof operations}
\label{part:2}

\section{Eilenberg-Mac~Lane spectra and the dual
Steenrod algebra}\label{sec:EM}

In this section we discuss the important case of the
Eilenberg-Mac~Lane spectrum for a prime~$p$ and take
$A=B=H=H\F_p$. The dual Steenrod algebra
$\mathcal{A}_*=\mathcal{A}(p)_*=H_*(H)$ is actually
a Hopf algebra over $\pi_*(H)=\F_p$ since the two
unit homomorphisms coincide. We will usually write
$\otimes=\otimes_{\F_p}$ in place of $\boxtimes_{\F_p}$
as there is no danger of confusion. The above isomorphism
\[
H_*(H)\boxtimes_{\F_p}H_*(X)
     \xrightarrow{\;\iso\;}
         H_*(X)\otimes_{\F_p}H_*(H)
\]
coincides with the composition
$\mathrm{switch}\circ(\chi\otimes I)$, and
\[
\tpsi = \mathrm{switch}\circ(\chi\otimes I)\circ\psi.
\]
On a basic tensor $\alpha\otimes x\in H_*(H)\otimes H_*(X)$
this gives
\[
\alpha\otimes x \longleftrightarrow
     (-1)^{|\alpha|\.|x|}x\otimes\chi(\alpha).
\]

The Steenrod algebra $\mathcal{A}^*$ is the $\F_p$-linear
dual of $\mathcal{A}_*$ with associated dual pairing
\[
\braket{-|-}\:\mathcal{A}^*\otimes\mathcal{A}_*\to\F_p.
\]
This gives rise to a \emph{right}\/ action of $\mathcal{A}^*$
on a \emph{left}\/ $\mathcal{A}_*$-comodule $M_*$ by
\[
a_* x = x\cdot a = (-1)^{|a|\,|x|}\sum_i\braket{a|\gamma_i}x_i,
\]
where $a\in\mathcal{A}^*$, $x\in M_*$ and
$\psi x = \sum_i\gamma_i\otimes x_i$. There is also a
dual pairing
\[
\braket{-|-}\:\mathcal{A}_*\otimes\mathcal{A}^*\to\F_p
\]
defined by
\[
\braket{\alpha|a} = (-1)^{|\alpha|\.|a|} \braket{a|\chi\alpha},
\]
giving an alternative formulation of the right action
as
\[
a_* x = \sum_i\braket{\gamma_i'|a}x_i,
\]
where $\tpsi x = \sum_i x_i\otimes\gamma_i'$.

\subsection{The case $p=2$}\label{subsec:EM-2}

When $A=B=H=H\F_2$, the dual Steenrod algebra is
\[
\mathcal{A}_* = \F_2[\xi_r:r\geq1] = \F_2[\zeta_r:r\geq1],
\]
where the Milnor generator $\xi_r\in\mathcal{A}_{2^r-1}$
is defined to be the image of the generator of
$H_{2^r-1}(\RP^\infty)$ under the homomorphism induced
by the canonical map $\RP^\infty\to\Sigma H\F_2$, and
$\zeta_r=\chi(\xi_r)$ is its conjugate; by convention
$\xi_0=\zeta_0=1$. We will make use of the generating
series
\[
\xi(t) = t + \sum_{r\geq1}\xi_rt^{2^r},
\quad
\zeta(t) = t + \sum_{r\geq1}\zeta_rt^{2^r}
\]
which are composition inverses, i.e.,
$\zeta(\xi(t)) = t = \xi(\zeta(t))$.

We have
\[
H_{2m+r}(D_2S^m) =
\begin{cases}
\F_2 &\text{if $r\geq0$}, \\
\;0 &\text{otherwise},
\end{cases}
\]
and the generator in degree $r+2m$ gives the operation
$\dlQ_r = \dlQ^{r+m}$. We write $\tQ_r = \tQ^{r+m}$
for the twisted version of these as
in~\eqref{eq:twist-Theta^e}, so
\[
\tQ_r = \chi \dlQ_r\chi = \chi \dlQ^{r+m} \chi = \tQ^{r+m}.
\]

\begin{thm}\label{thm:Coaction-DL-2}
Let $x\in H_m(E)$ and $\psi(x) = \sum_i\alpha_i\otimes x_i$.
Then
\[
\sum_{m\leq r}\psi(\dlQ^rx)t^r =
\sum_{m\leq k}\sum_{0\leq j\leq k}\sum_i
           \xi(t)^k\tQ^j\alpha_i\otimes \dlQ^{k-j}x_i,
\]
or equivalently
\[
\psi(\dlQ^rx) =
\sum_{m\leq k}\sum_{0\leq j\leq k}\sum_i
\left[\xi(t)^k\right]_{t^r}\tQ^j\alpha_i\otimes\dlQ^{k-j}x_i.
\]
\end{thm}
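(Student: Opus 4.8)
The plan is to deduce the formula from the general identity \eqref{eq:Coaction-Powop} by supplying two computational inputs: the $\mathcal{A}_*$-coaction on $H_*(D_2S^m)$, and the Cartan formula for Dyer--Lashof operations on a smash product. Recall that, for $x\in H_m(E)$, the operation $\dlQ^r=\dlQ_{r-m}$ is the power operation $\Theta^{e_{r-m}}$ attached to the generator $e_s\in H_{2m+s}(D_2S^m)$, and that \eqref{eq:Coaction-Powop} reads $\tpsi(\Theta^e(x))=\sum_i(1\otimes\chi\theta_i)\,\Theta^{e_i}(\tpsi x)$ with $\psi(e)=\sum_i\theta_i\otimes e_i$, the $\Theta^{e_i}$ on the right being the power operation for the $H_\infty$ ring spectrum $E\wedge H$ evaluated on $\tpsi x\in H_m(E\wedge H)$. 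Since the Theorem is stated for $\psi$ rather than $\tpsi$, I would first record the elementary translation: from $\tpsi=\mathrm{switch}\circ(\chi\otimes I)\circ\psi$ (Section~\ref{sec:EM}) and $\chi^2=\id$ on the Hopf algebra $\mathcal{A}_*$, one has $\psi=(\chi\otimes I)\circ\mathrm{switch}\circ\tpsi$, so that it suffices to evaluate the right-hand side of \eqref{eq:Coaction-Powop} for $e=e_s$ and then apply $(\chi\otimes I)\circ\mathrm{switch}$.

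The essential ingredient is a closed form for the coaction $\psi(e_s)\in\mathcal{A}_*\otimes H_*(D_2S^m)$. I would obtain it from the identification of $D_2S^m$ (for $m\geq0$) with a suspended stunted projective space: the $\Sigma_2$-sphere $S^m\wedge S^m$ is the representation sphere $S^{m\rho}\simeq S^m\wedge S^{m\sigma}$, with $\rho$ and $\sigma$ the regular and sign representations of $\Sigma_2$, so $D_2S^m\simeq\Sigma^m\,\mathrm{Th}(m\gamma\to B\Sigma_2)$; and since the total Stiefel--Whitney class $w(m\gamma)=(1+u)^m$ matches the binomial coefficients in $\Sq^iu^n=\binom{n}{i}u^{n+i}$, the Thom isomorphism identifies this Thom spectrum with the honest quotient $\RP^\infty/\RP^{m-1}$ compatibly with the $\mathcal{A}_*$-coactions. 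Hence $H_*(D_2S^m)$ is $\Sigma^m$ of a quotient comodule of $H_*(\RP^\infty)$, whose comodule structure is dual to the $\mathcal{A}^*$-action on $\F_2[u]$ recalled above; in generating-function form this gives
\[
\sum_{s\geq0}\psi(e_s)\,t^{m+s}=\sum_{\ell\geq0}\xi(t)^{m+\ell}\otimes e_\ell,
\qquad\text{equivalently}\qquad
\psi(e_s)=\sum_{\ell\geq0}\bigl[\xi(t)^{m+\ell}\bigr]_{t^{m+s}}\otimes e_\ell .
\]
For $m=0$ this is the familiar $\psi(\beta_n)=\sum_\ell[\xi(t)^\ell]_{t^n}\otimes\beta_\ell$ on $H_*(\RP^\infty)$, which one checks directly on low-degree classes.

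Substituting into \eqref{eq:Coaction-Powop} with $e=e_s$ and $\Theta^{e_s}=\dlQ^{s+m}$ yields
\[
\tpsi(\dlQ^{s+m}x)=\sum_{\ell\geq0}\bigl(1\otimes\chi\bigl[\xi(t)^{m+\ell}\bigr]_{t^{m+s}}\bigr)\,\dlQ^{m+\ell}(\tpsi x).
\]
Writing $\tpsi x=\sum_i x_i\otimes\chi\alpha_i$, I would expand $\dlQ^{m+\ell}(\tpsi x)$ using the Künneth isomorphism $H_*(E\wedge H)\iso H_*(E)\otimes\mathcal{A}_*$ — whose second tensor factor carries the Dyer--Lashof action on the dual Steenrod algebra — together with the Cartan formula, obtaining $\dlQ^{m+\ell}(\tpsi x)=\sum_i\sum_{a+b=m+\ell}\dlQ^a x_i\otimes\dlQ^b(\chi\alpha_i)$. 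Applying $(\chi\otimes I)\circ\mathrm{switch}$, and using that $\chi$ is an automorphism of the commutative Hopf algebra $\mathcal{A}_*$ with $\chi^2=\id$, so that $\chi\bigl(\chi(c)\cdot\dlQ^b(\chi\alpha_i)\bigr)=\tQ^b\alpha_i\cdot c$ because $\tQ^b=\chi\dlQ^b\chi$, and finally reindexing by $k=m+\ell$ and $j=b$, turns the identity into
\[
\psi(\dlQ^rx)=\sum_{m\leq k}\sum_{0\leq j\leq k}\sum_i\bigl[\xi(t)^k\bigr]_{t^r}\,\tQ^j\alpha_i\otimes\dlQ^{k-j}x_i ;
\]
multiplying by $t^r$ and summing over $r\geq m$ (since $\xi(t)^k$ is divisible by $t^k$ and $k\geq m$) produces the generating-function version.

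The substantive step is the coaction computation of the second paragraph; in particular the exponent shift by $m$ there — coming from the $m$-fold suspension, that is, the Thom class of $m\gamma$ — is precisely what makes $\xi(t)^k$, rather than $\xi(t)^{k-m}$, appear in the final answer. The remainder is bookkeeping: keeping track of the several occurrences of the antipode $\chi$ in passing between $\psi$ and $\tpsi$, and remembering that the Cartan expansion takes place in $H_*(E\wedge H)$ and not in $H_*(E)$. At $p=2$ there are no signs; in the odd-primary analogue they reappear and make the corresponding argument more delicate.
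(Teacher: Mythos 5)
Your proposal is correct and follows essentially the same route as the paper: identify $D_2S^m$ with the suspended Thom spectrum $\Sigma^m\RP^\infty_m$, read off the $\mathcal{A}_*$-coaction on its homology via the Thom isomorphism as $\sum_r t^{r+m}\psi\bar{e}_{r+m}=\sum_s\xi(t)^{s+m}\otimes\bar{e}_{s+m}$, and feed this into \eqref{eq:Coaction-Powop} to obtain $\sum_{m\leq r}\tpsi(\dlQ^rx)t^r=\sum_{m\leq k}\dlQ^k(\tpsi x)(1\otimes\zeta(t)^k)$. The only difference is that you spell out the final bookkeeping (the Cartan expansion in $H_*(E\wedge H)$ and the translation between $\tpsi$ and $\psi$ via the antipode and switch) that the paper leaves implicit, and these steps are carried out correctly.
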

\begin{proof}
We recall that for $m\in\Z$, there is a weak
equivalence
\[
D_2S^m \xrightarrow{\;\sim\;}\Sigma^m\RP^\infty_m,
\]
where $\RP^\infty_m$ is the Thom spectrum of the
virtual bundle $m\lambda$, and $\lambda\downarrow\RP^\infty$
is the canonical real line bundle associated to
the real sign representation of $\Sigma_2$. When
$m\geq0$,
\[
\RP^\infty_m = \RP^\infty/\RP^{m-1}.
\]
Writing $\bar{e}_{r+m}$ ($r\geq0$) for the image
of the generator $e_r\in H_r(\RP^\infty)$ under
the Thom isomorphism
\[
H_*(\RP^\infty)\iso H_{*+m}(\RP^\infty_m),
\]
the coaction is given by
\[
\sum_{r\geq0} t^{r+m} \psi\bar{e}_{r+m} =
\sum_{s\geq0} \xi(t)^{s+m}\otimes\bar{e}_{s+m}.
\]
Under the composition of the isomorphisms
\[
H_*(D_2S^m)
  \xrightarrow{\;\iso\;} H_*(\Sigma^m\RP^\infty_m)
  \xrightarrow{\;\iso\;} H_{*-m}(\RP^\infty_m)
\]
induced by the above equivalence, the following
elements
\[
e_r\otimes x_m^{\otimes2}
 \longleftrightarrow \bar{e}_{r+m},
\]
correspond, where $x_m\in H_m(S^m)$ is the generator.
Now the result follows from~\eqref{eq:Coaction-Powop},
which gives the following in terms of the right coaction
$\tpsi$,
\[
\sum_{m\leq r}\tpsi(\dlQ^rx)t^r =
\sum_{m\leq k} \dlQ^k(\tpsi x)(1\otimes\zeta(t)^k).
\qedhere
\]
\end{proof}

We will sometimes use generating functions to express
such formulae. For example, we have the series
\begin{align*}
\dlQ_t &= \sum_{r\in\Z}\dlQ^r \, t^r, \\
\intertext{and on substituting $\zeta(t)$ for $t$,}
\dlQ_{\zeta(t)} &= \sum_{r\in\Z}\dlQ^r \,\zeta(t)^r.
\end{align*}
Then
\begin{equation}\label{eq:tpsiQ_t-2}
\tpsi\dlQ_tx = \sum_{|x|\leq r}\tpsi(\dlQ^rx)\,t^r
= \sum_{|x|\leq r}\dlQ^r(\tpsi x)\,\zeta(t)^r
= \dlQ_{\zeta(t)}(\tpsi x).
\end{equation}

The following formulae for Dyer-Lashof operations
at the prime~$2$ are due to
Steinberger~\cite[theorem~III.2.2]{LNM1176}.
\begin{thm}\label{thm:Steinberger2.2}
For $r,s\geq1$,
\begin{align*}
\dlQ^{2^s-2}\zeta_1 &= \zeta_s, \\
\dlQ^r\zeta_1 &\neq 0, \\
\dlQ^r\zeta_s &=
\begin{cases}
\dlQ^{r+2^s-2}\zeta_1 & \text{\rm if $r\equiv0,-1\pmod{2^s}$}, \\
\ph{abcde}0 & \text{\rm otherwise}.
\end{cases}
\end{align*}
\end{thm}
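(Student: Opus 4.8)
The plan is to deduce all three statements from Theorem~\ref{thm:Coaction-DL-2}, applied with $E=H=H\F_2$ and $x=\zeta_1\in H_1(H)$, together with the Hopf-algebra structure of $\mathcal{A}_*$ and the instability/multiplicativity properties of the Dyer--Lashof operations on $H_*(H)$. First I would record the coaction on $\zeta_1$: since $\mathcal{A}_*$ is a Hopf \emph{algebra} with the two units coinciding, the left coaction $\psi\colon H_*(H)\to\mathcal{A}_*\otimes H_*(H)$ is the coproduct, and on the primitive $\zeta_1=\xi_1$ we have $\psi(\zeta_1)=\zeta_1\otimes1+1\otimes\zeta_1$. (I would double-check the side conventions, but the point is that $\zeta_1$ is primitive.) Substituting this into the generating-function form of Theorem~\ref{thm:Coaction-DL-2} turns the left-hand side $\sum_{1\le r}\psi(\dlQ^r\zeta_1)t^r$ into an explicit sum in $\xi(t)$ with coefficients $\tQ^j(\zeta_1)$ or $\tQ^j(1)$ and $\dlQ^{k-j}(\zeta_1)$ or $\dlQ^{k-j}(1)$; since $\dlQ^i1=0$ for $i>0$ and $\dlQ^01=1$, $\tQ^i1=0$ for $i\ne0$, only a few terms survive, and I expect to read off that $\psi(\dlQ^r\zeta_1)$ is again primitive modulo lower terms, forcing $\dlQ^r\zeta_1$ to be (a polynomial generator-detecting) element whose leading Milnor term is $\zeta_{s}$ when $r=2^s-2$. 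Computing the coefficient $[\xi(t)^k]_{t^r}$ for $k=r$, i.e. the constant term $1$, and for $k$ slightly larger, pins down $\dlQ^{2^s-2}\zeta_1=\zeta_s$ once one knows the only element of $H_{2^s-1}(H)$ with the right coaction is $\zeta_s$; that last uniqueness is where I would use that $\mathcal{A}_*$ is polynomial on the $\zeta_r$ so the primitives in each degree $2^s-1$ are spanned by $\zeta_s$ (plus decomposables killed by the reduced coproduct, which I rule out by a connectivity/degree count).

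Next, for $\dlQ^r\zeta_1\ne0$ for all $r\ge1$: I would argue that if some $\dlQ^r\zeta_1$ vanished, then feeding $0$ back through the primitive-coaction computation above would force the generating series identity $\sum\psi(\dlQ^r\zeta_1)t^r=(\text{something involving }\xi(t))$ to lose a term, contradicting the fact that the right-hand side of Theorem~\ref{thm:Coaction-DL-2}, evaluated on the primitive $\zeta_1$, has a nonzero $t^r$-coefficient for every $r\ge1$ (because $\xi(t)=t+\cdots$ is invertible, so $\xi(t)^k$ contributes in all degrees $\ge k$). Concretely, the coefficient of $t^r$ on the right involves $\zeta_r=\chi(\xi_r)$ with coefficient $[\xi(t)^1]_{t^r}$-type terms which are nonzero; I would make this precise by extracting the "top" Milnor monomial.

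Finally, for the general relation $\dlQ^r\zeta_s$: here I would use the Nishida relations' dual, which in the form we have is exactly Theorem~\ref{thm:Coaction-DL-2}, combined with the first statement $\zeta_s=\dlQ^{2^s-2}\zeta_1$ and the iterated-operation bookkeeping. The strategy: write $\dlQ^r\zeta_s=\dlQ^r\dlQ^{2^s-2}\zeta_1$ and use the Adem relations for Dyer--Lashof operations together with the coaction formula to see that $\dlQ^r\zeta_s$ either lands in the span of $\dlQ^{r+2^s-2}\zeta_1$ or is zero. The cleanest route is probably to compute $\psi(\dlQ^r\zeta_s)$ via Theorem~\ref{thm:Coaction-DL-2} applied to $x=\zeta_s$ (whose coaction $\psi(\zeta_s)=\sum_{i}\zeta_i\otimes\zeta_{s-i}^{2^i}$ is the standard one), show the result agrees with $\psi(\dlQ^{r+2^s-2}\zeta_1)$ precisely when $r\equiv 0,-1\pmod{2^s}$, and then invoke uniqueness (detection by the coaction, since $H_*(H)$ is a polynomial algebra and the relevant graded pieces are small) to upgrade the equality of coactions to equality of elements; in the remaining residue classes the coaction computation yields something not in the image, and by the same detection principle the element must vanish. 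The main obstacle I anticipate is the combinatorics of extracting $[\xi(t)^k]_{t^r}$ and matching it against the Adem-reduced form of $\dlQ^r\dlQ^{2^s-2}$: the congruence conditions $r\equiv 0,-1\pmod{2^s}$ must fall out of a binomial-coefficient-mod-$2$ (Lucas/Kummer) analysis of those coefficients, and getting the bookkeeping of the twisted operations $\tQ^j$ exactly right — rather than off by a conjugation or a shift — is the delicate point; I would cross-check the $s=1$ case by hand against the known value $\dlQ^r\zeta_1$ before trusting the general pattern.
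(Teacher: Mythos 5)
There is a genuine gap: your argument is circular. Theorem~\ref{thm:Coaction-DL-2} applied with $E=H$ expresses the coproduct $\psi(\dlQ^r x)=\Delta(\dlQ^r x)$ of an element $x\in H_*(H)=\mathcal{A}_*$ in terms of the quantities $\tQ^j\alpha_i$ and $\dlQ^{k-j}x_i$, where both the $\alpha_i$ and the $x_i$ lie in $\mathcal{A}_*$. For $x=\zeta_1$ (which is indeed primitive) it reads
\[
\Delta(\dlQ^r\zeta_1)=\sum_{1\le k\le r}\bigl[\xi(t)^k\bigr]_{t^r}\bigl(\chi(\dlQ^k\zeta_1)\otimes1+1\otimes\dlQ^k\zeta_1\bigr),
\]
which is a consistency condition among the unknowns $\dlQ^k\zeta_1$, not a computation of them. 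If you try to solve it inductively, the $k=r$ term contributes $\chi(y)\otimes1+1\otimes y$ with $y=\dlQ^r\zeta_1$, so the equation determines $y$ only modulo the kernel of $y\mapsto\Delta(y)+\chi(y)\otimes1+1\otimes y$, and that kernel is the space of $\chi$-invariant primitives, spanned by $\xi_1^{2^i}$ in degree $2^i$ and zero elsewhere. This fails exactly where it matters: $\dlQ^3\zeta_1=\xi_1^4$ lies in that kernel, so the consistency condition is equally compatible with $\dlQ^3\zeta_1=0$, and the nonvanishing statement $\dlQ^r\zeta_1\neq0$ cannot be extracted this way. (Your supporting claim that the primitives of $\mathcal{A}_*$ in degree $2^s-1$ are spanned by $\zeta_s$ is also false: $\zeta_s$ is not primitive for $s\ge2$, and $P(\mathcal{A}(2)_*)$ is concentrated in degrees $2^i$.) The same circularity afflicts the third formula, since $\psi(\zeta_s)=\sum_i\zeta_i\otimes\zeta_{s-i}^{2^i}$ feeds further unknown values $\tQ^j\zeta_i$ and $\dlQ^{k-j}\zeta_{s-i}^{2^i}$ into the right-hand side. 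Some external input on the Dyer--Lashof action is unavoidable.

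For comparison: the paper does not prove Theorem~\ref{thm:Steinberger2.2} where it is stated; it is quoted from Steinberger~\cite[theorem~III.2.2]{LNM1176}. It is, however, rederived in Section~\ref{sec:DL-A*} from exactly the kind of independent input your proposal lacks, namely Kochman's computation of the Dyer--Lashof action on $H_*(BO;\F_2)$ transported along $MO\to H\F_2$: one obtains $\dlQ^r\mathrm{N}_n(\xi)=\binom{r-1}{n-1}\mathrm{N}_{n+r}(\xi)$, combines this with $\mathrm{N}_{2^s-1}(\xi)=-\zeta_s$ from~\eqref{eq:Nn(xi)}, the mod~$2$ binomial analysis~\eqref{eq:Qr-SOK-2}, and Lemma~\ref{lem:Nnnot0}. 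In that derivation the coaction formula of Theorem~\ref{thm:Coaction-DL-2} serves as a compatibility check (and later as a computational tool once the action on $\mathcal{A}_*$ is known), not as the source of the computation.
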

\begin{cor}\label{cor:Steinberger2.2}
For $s,t\geq1$,
\begin{align*}
\tQ^{2^s-2}\xi_1 &= \xi_s, \\
\tQ^r\xi_1 &\neq 0, \\
\tQ^r\xi_s &=
\begin{cases}
\tQ^{r+2^s-2}\xi_1 & \text{\rm if $r\equiv0,-1\pmod{2^s}$}, \\
\ph{abcde}0 & \text{\rm otherwise}.
\end{cases}
\end{align*}
\end{cor}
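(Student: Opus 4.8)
The plan is to deduce everything directly from Theorem~\ref{thm:Steinberger2.2} by transporting it through the antipode $\chi$ of the Hopf algebra $\mathcal{A}_*$. Since $\mathcal{A}_*$ is commutative, $\chi$ is a graded ring automorphism and an involution, so $\chi(\xi_r)=\zeta_r$ and $\chi(\zeta_r)=\xi_r$ for every $r\geq0$. By the definition~\eqref{eq:twist-Theta^e} of the twisted operations, together with the identification $\tQ_r=\chi\dlQ_r\chi$ recorded above, the twisted Dyer-Lashof operations acting on $H_*(H)=\mathcal{A}_*$ satisfy $\tQ^r=\chi\circ\dlQ^r\circ\chi$; hence for all $r$ and all $s\geq1$,
\[
\tQ^r\xi_s \;=\; \chi\bigl(\dlQ^r(\chi\xi_s)\bigr) \;=\; \chi\bigl(\dlQ^r\zeta_s\bigr).
\]

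Now I would simply apply $\chi$ to each of Steinberger's three formulae. From $\dlQ^{2^s-2}\zeta_1=\zeta_s$ one gets $\tQ^{2^s-2}\xi_1=\chi(\zeta_s)=\xi_s$, the first formula. From $\dlQ^r\zeta_1\neq0$ and the injectivity of $\chi$ one gets $\tQ^r\xi_1=\chi(\dlQ^r\zeta_1)\neq0$, the second. For the third, Steinberger's case division for $\dlQ^r\zeta_s$ gives, when $r\equiv0,-1\pmod{2^s}$,
\[
\tQ^r\xi_s \;=\; \chi\bigl(\dlQ^r\zeta_s\bigr) \;=\; \chi\bigl(\dlQ^{r+2^s-2}\zeta_1\bigr) \;=\; \tQ^{r+2^s-2}\xi_1,
\]
the last equality being the displayed identity above with $s$ replaced by $1$ and $r$ by $r+2^s-2$; and when $r\not\equiv0,-1\pmod{2^s}$ it gives $\tQ^r\xi_s=\chi(0)=0$.

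This is essentially a formal translation, so there is no substantive obstacle; the only point needing a moment's attention is the involutivity of $\chi$ (equivalently, that $\chi$ interchanges $\xi_s$ and $\zeta_s$), which is immediate from $\zeta_s=\chi(\xi_s)$ and $\chi^2=\id$ on the commutative Hopf algebra $\mathcal{A}_*$. One could alternatively run the whole argument in generating-series form, applying $\chi$ to a series version of Theorem~\ref{thm:Steinberger2.2}, but the term-by-term version above seems cleanest.
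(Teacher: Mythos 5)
Your argument is correct and is precisely the (implicit) reasoning behind the paper's corollary, which is stated without proof immediately after Theorem~\ref{thm:Steinberger2.2}: one conjugates each of Steinberger's formulae by the antipode $\chi$, using $\tQ^r=\chi\circ\dlQ^r\circ\chi$ from~\eqref{eq:twist-Theta^e} and the fact that $\chi$ is an involutive algebra automorphism interchanging $\xi_r$ and $\zeta_r$. Nothing further is needed.
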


For later use we record a result that may be
known but we know of no reference.
\begin{lem}\label{lem:DL-xi}
For $s\geq1$,
\begin{equation}\label{eq:DL-xi}
\dlQ^{2^s}\xi_s = \xi_{s+1} + \xi_1\xi_s^2.
\end{equation}
\end{lem}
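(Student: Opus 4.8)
The plan is to combine the coaction formula for Dyer--Lashof operations (Theorem~\ref{thm:Coaction-DL-2}, applied to $E=H$) with the known structure of $\mathcal{A}_*$ and Steinberger's computations as reformulated in Corollary~\ref{cor:Steinberger2.2}. First I would apply Theorem~\ref{thm:Coaction-DL-2} to the element $x=\xi_s\in H_{2^s-1}(H)$. I need the coaction $\psi(\xi_s)$; since $\xi(t)$ and $\zeta(t)$ are composition inverses and the coproduct on the $\xi_r$ is the classical Milnor one, $\psi(\xi_s)=\sum_{i+j=s}\xi_i^{2^j}\otimes\xi_j$ (with $\xi_0=1$), so the terms $\alpha_i\otimes x_i$ are known explicitly. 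Plugging into the generating-series form of Theorem~\ref{thm:Coaction-DL-2} expresses $\sum_r\psi(\dlQ^r\xi_s)t^r$ as a sum over $k\geq 2^s-1$, $0\leq j\leq k$, and $i$ of $\xi(t)^k\,\tQ^j(\xi_i^{2^j})\otimes\dlQ^{k-j}\xi_j$; extracting the coefficient of $t^{2^s}$ on the left isolates $\psi(\dlQ^{2^s}\xi_s)$, and the claim \eqref{eq:DL-xi} is then equivalent to checking that this equals $\psi(\xi_{s+1}+\xi_1\xi_s^2)$, i.e.\ $\sum_{i+j=s+1}\xi_i^{2^j}\otimes\xi_j + \psi(\xi_1)\psi(\xi_s)^2$.

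The alternative, and I think cleaner, route avoids computing the full coaction and instead works on the primitives / uses the dual action of $\mathcal{A}^*$. By the discussion in Section~\ref{sec:EM}, knowing $\dlQ^{2^s}\xi_s$ up to elements killed by the coaction is the same as pairing against $\mathcal{A}^*$; since $\mathcal{A}_{\,2^{s+1}-1}$ is spanned by monomials in the $\xi_r$ of total degree $2^{s+1}-1$, one can pin down $\dlQ^{2^s}\xi_s$ by (i) a degree count — $|\dlQ^{2^s}\xi_s| = 2^s + (2^s-1) = 2^{s+1}-1$, so the target lies in the right graded piece — and (ii) evaluating the Nishida-type relations, equivalently using the $\tQ^j\xi_1$ and $\tQ^j\xi_s$ values from Corollary~\ref{cor:Steinberger2.2} together with the Cartan formula for $\dlQ$ and the instability/excess conditions ($\dlQ^a y = 0$ for $a<|y|$, $\dlQ^a y = y^2$ for $a=|y|$). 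Concretely, writing $\xi_s = \tQ^{2^{s-1}-1}\cdots$ is not quite available, but one can instead induct: assume the formula for $s-1$, write $\xi_s$ in terms of $\tQ$-operations on $\xi_1$ via Corollary~\ref{cor:Steinberger2.2}, apply $\dlQ^{2^s}$, and use the Adem relations for the $\dlQ^a$ together with the Cartan formula to reduce everything to operations on $\xi_1$.

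The main obstacle, and where the real work sits, is the bookkeeping in the generating-function manipulation: extracting $[\,\cdot\,]_{t^{2^s}}$ from $\xi(t)^k$ for the relevant range of $k$ requires knowing the low-order coefficients of powers of $\xi(t)=t+\xi_1 t^2+\xi_2 t^4+\cdots$, and since only $k\in\{2^s-1,2^s\}$ can contribute to $t^{2^s}$ (because $\xi(t)^k$ starts in degree $t^k$), the sum collapses to very few terms — $k=2^s$ gives the leading $\xi_{s+1}\otimes 1$-type contribution and the "old" coaction terms, while $k=2^s-1$ contributes $[\xi(t)^{2^s-1}]_{t^{2^s}} = (2^s-1)\xi_1 = \xi_1$ in $\F_2$, which is exactly the source of the correction term $\xi_1\xi_s^2$. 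I would verify that these two contributions, after reassembling with the $\tQ^j$ and $\dlQ^{k-j}$ factors and matching against $\psi(\xi_{s+1}+\xi_1\xi_s^2)$, agree termwise; a short induction on $s$ (base case $s=1$: $\dlQ^2\xi_1=\xi_2+\xi_1^3$, checkable directly from Theorem~\ref{thm:Steinberger2.2} and the Cartan formula) then closes the argument. The delicate point to get right is the sign/indexing in the twisted operations $\tQ^j=\chi\dlQ^j\chi$ versus $\dlQ^j$ and making sure the antipode $\chi$ is tracked consistently through the $\xi\leftrightarrow\zeta$ substitution in \eqref{eq:tpsiQ_t-2}.
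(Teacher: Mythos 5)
Your proposal does not reproduce the paper's argument, and neither of your two routes closes as sketched. The paper's proof is more elementary than either: it expands $\xi_s$ via the antipode recursion $\xi_s=\zeta_s+\zeta_{s-1}\xi_1^{2^{s-1}}+\cdots+\zeta_1\xi_{s-1}^2$, applies $\dlQ^{2^s}$ with the Cartan formula, notes that for degree reasons the only surviving Cartan terms are $(\dlQ^{2^r}\zeta_r)\dlQ^{2^s-2^r}(\xi_{s-r}^{2^r})=\zeta_{r+1}\xi_{s-r}^{2^{r+1}}$ (using the squaring condition $\dlQ^{|y|}y=y^2$ and Steinberger's $\dlQ^{2^r}\zeta_r=\zeta_{r+1}$ from Theorem~\ref{thm:Steinberger2.2}), and then recognizes the resulting sum $\zeta_{s+1}+\zeta_s\xi_1^{2^s}+\cdots+\zeta_2\xi_{s-1}^{4}$ as $\xi_{s+1}+\zeta_1\xi_s^2$ by completing it to the antipode recursion for $\xi_{s+1}$. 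The correction term $\xi_1\xi_s^2$ arises from this completion step, not from the coefficient $[\xi(t)^{2^s-1}]_{t^{2^s}}=\xi_1$ as you suggest. No coaction formula is used, and the induction you set up is not actually needed.

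The concrete gaps: in your first route, Theorem~\ref{thm:Coaction-DL-2} applied to $x=\xi_s\in H_*(H)$ expresses $\psi(\dlQ^{2^s}\xi_s)$ in terms of $\tQ^j(\xi_a^{2^b})\otimes\dlQ^{k-j}\xi_b$ over all $a+b=s$; this includes the tautological term $1\otimes\dlQ^{2^s}\xi_s$ and, for $1\leq b\leq s-1$, the quantities $\dlQ^{k-j}\xi_b$ for a whole range of superscripts, none of which are supplied by Corollary~\ref{cor:Steinberger2.2} (which gives $\tQ^r\xi_b$, not $\dlQ^r\xi_b$) or by the inductive hypothesis (which only gives $\dlQ^{2^b}\xi_b$). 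So the sum does not collapse to the two contributions you name, and the computation is circular without a far stronger induction; one would also need the observation that $\mathcal{A}_*$ has no primitives in degree $2^{s+1}-1$ to pass from the reduced coproduct back to the element. In your second route, the step ``apply $\dlQ^{2^s}$ to a $\tQ$-word on $\xi_1$ and use the Adem relations'' fails because $\tQ^j=\chi\dlQ^j\chi$ and $\chi$ does not commute with the Dyer-Lashof action, so the composites $\dlQ^{a}\tQ^{b}$ are not subject to the Adem relations. The missing idea is precisely the use of the antipode recursion to convert the problem into one about $\dlQ^r\zeta_s$, where Steinberger's theorem applies directly.
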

\begin{proof}
Before proving this we note that if $1\leq r\leq s$,
then for degree reasons
\begin{align*}
\dlQ^{2^s}(\zeta_{r}\xi_{s-r}^{2^r})
&= (\dlQ^{2^r-1}\zeta_{r})\dlQ^{2^s-2^r+1}(\xi_{s-r}^{2^r})
   + (\dlQ^{2^r}\zeta_{r})\dlQ^{2^s-2^r}(\xi_{s-r}^{2^r}) \\
&= \zeta_{r+1}\xi_{s-r}^{2^{r+1}}.
\end{align*}

Suppose that~\eqref{eq:DL-xi} is true for $s<n$.
By definition of the antipode $\chi$, and using
Theorem~\ref{thm:Steinberger2.2} we obtain
\begin{align*}
\dlQ^{2^n}\xi_n &=
\dlQ^{2^n}(\zeta_n + \zeta_{n-1}\xi_1^{2^{n-1}}
               + \cdots + \zeta_1\xi_{n-1}^2) \\
&= \zeta_{n+1} + (\dlQ^{2^{n-1}}\zeta_{n-1})\xi_1^{2^{n}}
               + \cdots + (\dlQ^2\zeta_1)\xi_{n-1}^{2^2} \\
&= \zeta_{n+1} + \zeta_{n}\xi_1^{2^{n}}
               + \cdots + \zeta_2\xi_{n-1}^{2^2}  \\
&= (\zeta_{n+1} + \zeta_{n}\xi_1^{2^{n}} + \cdots
               + \zeta_2\xi_{n-1}^{2^2} + \zeta_1\xi_{n}^{2})
               + \zeta_1\xi_{n}^{2} \\
&= \xi_{n+1} + \xi_1\xi_{n}^{2}.
\qedhere
\end{align*}
\end{proof}

\subsection{The case of an odd prime}\label{subsec:EM-odd}

Suppose that~$p$ is an odd prime and $A=H=H\F_p$. The
dual Steenrod algebra is
\[
\mathcal{A}_*
= \F_p[\xi_r: r\geq 1]\otimes\Lambda(\tau_s: s\geq0)
= \F_p[\zeta_r: r\geq 1]\otimes\Lambda(\btau_s: s\geq0)
\]
where the Milnor generators $\xi_r\in\mathcal{A}_{2(p^r-1)}$
and $\tau_r\in\mathcal{A}_{2p^r-1}$ are the images of
generators of $H_{2p^r-1}(BC_p)$ and $H_{2p^r}(BC_p)$
under the homomorphism induced by the canonical map
$BC_p\to\Sigma H\F_p$, and $\zeta_r=\chi(\xi_r)$,
$\btau_r=\chi(\tau_r)$ are their conjugates; by
convention $\xi_0=\zeta_0=1$.

For $m\in\Z$,
\[
H_{2mp+2r(p-1)-\epsilon}(D_pS^{2m}) =
\begin{cases}
\F_p &\text{if $r\geq0$ and $\epsilon=0$}, \\
\F_p &\text{if $r\geq1$ and $\epsilon=1$}, \\
\;0 &\text{otherwise},
\end{cases}
\]
and a suitably chosen generator in degree
$2mp+2r(p-1)-\epsilon$ gives rise to the operation
$\beta^\epsilon\dlQ_r = \beta^\epsilon\dlQ^{r+m}$.

In order to give a similar discussion to that for
the case $p=2$, we follow the outline
of~\cite[section~V.2]{LNM1176}. Let
\[
W = \{(x_1,\ldots,x_p)\in\R^p : x_1+\cdots + x_p=0\}
\]
be the reduced real permutation representation of
$\Sigma_p$ in which not all elements act orientably,
although $C_p\leq\Sigma_p$ does act by preserving
orientations. Given any finite dimensional real
vector space $U$, we can view $U^p=U\oplus\cdots\oplus U$
with the permutation action of $\Sigma_p$ as equivalent
to
\[
(\R\oplus W)\otimes_\R U\iso U\oplus(W\otimes_\R U)
\]
with $\Sigma_p$ acting only on the left hand factor
and second summands respectively. As
$C_p$-representations,
\[
W \iso W_1 \oplus\cdots \oplus W_{(p-1)/2},
\]
where $W_r = \R^2$ with the generator of $C_p$ acting
as the matrix
$\begin{bmatrix}
\cos(2\pi r/p) & -\sin(2\pi r/p) \\
\sin(2\pi r/p) & \ph{-}\cos(2\pi r/p) \\
\end{bmatrix}$,
which commutes with
$\begin{bmatrix}
0 & -1 \\
1 & \ph{-}0 \\
\end{bmatrix}$.
Therefore each $W_r$ together with $W$ itself has
a natural complex structure compatible with the
$C_p$-action, so $W\otimes_\R U$ can be viewed as
a complex $C_p$-representation. In particular, for
any~$n$, as a $C_p$-representation,
\[
(\R^n)^p \iso \R^n \oplus (W\otimes_\R\R^n)
         \iso \R^n \oplus W^n,
\]
where $W^n$ has the componentwise action, and it
follows that
\[
D_{C_p}S^n \iso
  S^n\wedge E\Sigma_p\ltimes_{C_p}(W^n)^\dag,
\]
where $(-)^\dag$ denotes one-point compactification
of a vector space. Here $E\Sigma_p\ltimes_{C_p}(W^n)^\dag$
is the Thom spectrum of the bundle
\[
E\Sigma_p\ltimes_{C_p}W^n \downarrow BC_p.
\]
As explained in~\cite[section~V.2]{LNM1176}, this
spectrum can be interpreted as the suspension spectrum
of a truncated lens space, but the orientability of
this bundle suffices for our purposes since there
is a Thom isomorphism in mod~$p$ homology
\[
H_*(BC_p) \iso
H_{*+n(p-1)}(E\Sigma_p\ltimes_{C_p}(W^n)^\dag).
\]
We remark that this viewpoint is likely to be useful
in investigating the kind of operations mentioned in
Section~\ref{sec:Coactions-modules} associated with
Lubin-Tate spectra and Morava $K$-theory.

%In fact,
%\[
%D_pS^{2m} \iso
%   \Sigma^{2m}E\Sigma_p\ltimes_{\Sigma_p}(V^m)^\dag,
%\]
%where $V$ is a $(p-1)$-dimensional $\C$-vector space
%with the reduced permutation action of $\Sigma_p$,
%i.e.,
%\[
%V = \{(z_1,\ldots,z_p)\in\C^p : z_1+\cdots + z_p=0\}
%\]
%with $\Sigma_p$ acting by permuting the coordinates,
%and $(V^m)^\dag$ is the one-point compactification
%of $V^m$. We can also consider the cyclic subgroup
%$C_p\leq\Sigma_p$, which acts orientably on $V$;
%there is a natural map
%\[
%D_{C_p}S^{2m} \iso
%\Sigma^{2m}EC_p\ltimes_{C_p}(V^m)^\dag \to
%\Sigma^{2m}E\Sigma_p\ltimes_{\Sigma_p}(V^m)^\dag
%                \iso D_pS^{2m}
%\]
%which induces an epimorphism in $H_*(-)$ which is
%an isomorphism when the codomain is non-trivial.
%
%We also have
%\[
%D_pS^{2m+1} \iso
%\Sigma^{2m+1}E\Sigma_p\ltimes_{\Sigma_p}(V^m\oplus W)^\dag,
%\]
%where
%\[
%W = \{(x_1,\ldots,x_p)\in\R^p : x_1+\cdots + x_p=0\}
%\]
%is the reduced real permutation representation of
%$\Sigma_p$ in which not all elements act orientably.
%Similarly,
%\[
%D_{C_p}S^{2m+1} \iso
%\Sigma^{2m+1}E\Sigma_p\ltimes_{C_p}(V^m\oplus W)^\dag,
%\]
%but $C_p$ does act orientably on $W$. This time we have
%\[
%H_{(2m+1)p+2r(p-1)-\epsilon}(D_pS^{2m+1}) =
%\begin{cases}
%\F_p &\text{if $r\geq1$ and $\epsilon=0,1$}, \\
%\;0 &\text{otherwise}.
%\end{cases}
%\]
%Again the natural map $D_{C_p}S^{2m+1}\to D_pS^{2m+1}$
%induces an epimorphism in $H_*(-)$, but this is trivial
%in degree $p(2m+1)$ even though the domain is non-zero.

Let $z\in H^1(BC_p)$ and let $y=\beta z\in H^2(BC_p)$
be generators of
\[
H^*(BC_p) = \F_p[y]\otimes\Lambda(z).
\]
Let $a_n\in H_n(BC_p)$ be dual to
$z^{\epsilon(n)}y^{\lfloor n/2\rfloor}$, where
$\epsilon(n) = (1-(-1)^n)/2$ and $\lfloor-\rfloor$
is the floor function. We will use a formula for the
left coaction
$\psi\:H_*(BC_p)\to\mathcal{A}_*\otimes H_*(BC_p)$,
originally due to Milnor~\cite{JWM:StAlg}, see also
Boardman's account~\cite{JMB:8-foldway}.

We introduce two formal variables $t_+,t_-$ in degrees
$-2,-1$ respectively (so the usual graded commutativity
rules apply), and defining generating series
\begin{align*}
a(t) = a(t_+,t_-) &=
\sum_{\substack{n\geq1 \\ r=0,1}}a_{2n-r}t_+^{n-r}t_-^r, && \\
\xi(t) = \xi(t_+) &= \sum_{r\geq0} \xi_r t_+^{p^r}, &
\zeta(t) = \zeta(t_+) &= \sum_{r\geq0} \zeta_r t_+^{p^r}, \\
\tau(t) = \tau(t_+,t_-) &=  t_- + \sum_{r\geq0} \tau_r t_+^{p^r}, &
\btau(t) = \btau(t_+,t_-) &=  t_- + \sum_{r\geq0} \btau_r t_+^{p^r}.
\end{align*}
Notice that $\tau(t)$ and $\btau(t)$ have odd degree
and so $\tau(t)^2=0=\btau(t)^2$. The left coaction
is given by
\begin{align}
\psi a(t) &=
\sum_{\substack{n\geq1 \\ r=0,1}}\psi a_{2n-r}t_+^{n-r}t_-^r \notag\\
&= a(\xi(t_+),\tau(t_+,t_-)) \notag\\
&=
\sum_{k\geq1}
\biggl(\xi(t_+)^k \otimes a_{2k}
-\tau(t_+,t_-)\xi(t_+)^{k-1} \otimes a_{2k-1}
\biggr)                             \notag \\
&=
\sum_{k\geq1}
\left(\xi(t_+)^k \otimes a_{2k}
- \sum_{r\geq0}
    \tau_rt_+^{p^r}\xi(t_+)^{k-1}\otimes a_{2k-1}
+ \xi(t_+)^{k-1} \otimes a_{2k-1}t_-\right)
                                       \notag \\
&=
\sum_{k\geq1}
\left(\xi(t_+)^k \otimes a_{2k}
- \sum_{r\geq0}
    \tau_rt_+^{p^r}\xi(t_+)^{k-1}\otimes a_{2k-1}
- \xi(t_+)^{k-1}t_- \otimes a_{2k-1}\right).
                             \label{eq:Coaction-odd}
\end{align}
Notice the effect of interchanging $t_-$ and $a_{2k-1}$
which disappears when we instead take the right coaction:
\begin{align}
\tpsi a(t) &= a(\zeta(t_+),\btau(t_+,t_-)) \notag\\
&=
\sum_{k\geq1}
\biggl( a_{2k}\otimes\zeta(t_+)^k
+ a_{2k-1} \otimes\btau(t_+,t_-)\zeta(t_+)^{k-1}
\biggr)                \notag \\
&=
\sum_{k\geq1}
\left(a_{2k}\otimes \zeta(t_+)^k
+ \sum_{r\geq0}
a_{2k-1}\otimes\btau_rt_+^{p^r}\zeta(t_+)^{k-1}
+ a_{2k-1}\otimes \zeta(t_+)^{k-1}t_- \right).
                          \label{eq:R-Coaction-odd}
\end{align}
By comparing coefficients of monomials in $t_+$ and $t_-$
we obtain explicit formulae for $n\geq1$:
\begin{subequations}\label{eq:Coaction-odd-explicit}
\begin{align}
\psi(a_{2n})
&=
\sum_{k=1}^{n}\left[\xi(t_+)^k\right]_{t_+^{n}}\otimes a_{2k}
 - \sum_{\substack{0\leq p^r\leq n \\ 1\leq k\leq n}}
\left[\tau_rt_+^{p^r}\xi(t_+)^{k-1}\right]_{t_+^n}\otimes a_{2k-1},
\label{eq:Coaction-odd-explicit-even} \\
\psi(a_{2n-1}) &=
\sum_{k=1}^n\left[\xi(t_+)^{k-1}\right]_{t_+^{n-1}}\otimes a_{2k-1},
\label{eq:Coaction-odd-explicit-odd}
\end{align}
\end{subequations}
or equivalently
\begin{subequations}\label{eq:R-Coaction-odd-explicit}
\begin{align}
\tpsi(a_{2n})
&=
\sum_{k=1}^{n}a_{2k}\otimes\left[\zeta(t_+)^k\right]_{t_+^{n}}
+ \sum_{\substack{0\leq p^r\leq n \\ 1\leq k\leq n}}
a_{2k-1}\otimes \left[\btau_rt_+^{p^r}\zeta(t_+)^{k-1}\right]_{t_+^n},
\label{eq:R-Coaction-odd-explicit-even} \\
\tpsi(a_{2n-1}) &=
\sum_{k=1}^na_{2k-1}\otimes \left[\zeta(t_+)^{k-1}\right]_{t_+^{n-1}}.
\label{eq:R-Coaction-odd-explicit-odd}
\end{align}
\end{subequations}

We recall from \cite[chapter~III]{LNM1176} the following
definitions of Dyer-Lashof operations on $x\in H_m(E)$.
As for the prime~$2$, $\dlQ^rx$ and $\beta\dlQ^rx$
originate on elements $H_*(D_{C_p}S^m)$, namely whenever
$2r\geq m$,
\begin{align*}
(-1)^r\nu(m)a_{(2r-m)(p-1)}\otimes x_m^{\otimes p}
                &\longmapsto\dlQ^rx, \\
(-1)^r\nu(m)a_{(2r-m)(p-1)-1}\otimes x_m^{\otimes p}
                &\longmapsto \beta\dlQ^rx.
\end{align*}
where
\[
\nu(m) = (-1)^{m(m-1)(p-1)/4}\biggl(((p-1)/2)!\biggr)^m.
\]
Notice that this factor does not depend on~$r$.

We will use generating series in the indeterminates
$t_+,t_-$ for encoding actions of Dyer-Lashof operations.
We set
\begin{align*}
\dlQ_t(-) = \dlQ_{t_+,t_-}(-)
&= \sum_{r}\dlQ^r(-)\,t_+^{r(p-1)},  \\
\beta\dlQ_t(-) = \beta\dlQ_{t_+,t_-}(-)
&= \sum_{r}\beta\dlQ^r(-)\,t_+^{r(p-1)-1}t_-,
\end{align*}
where the coefficients are operators that can be
applied to homology elements.

We obtain the following result on the coaction and
Dyer-Lashof operations in the homology of an $H_\infty$
ring spectrum $E$. To ease the notation, we state it
in terms of the right coaction $\tpsi$, omitting
$\otimes$ when no confusion seems likely.

Choose $\omega\in\F_{p^2}^\times$ to be a primitive
$(p-1)$-th root of~$-1$; although not uniquely
determined, $\omega$ gives a well defined element
of the cyclic group $\F_{p^2}^\times/\F_{p}^\times$.
We make use of the ceiling function $\lceil-\rceil$.
\begin{thm}\label{thm:CoactionDLp}
For $x\in H_m(E)$,
\begin{align*}
\tpsi(\dlQ_t x)
=&
\sum_{\lceil m/2\rceil\leq r}\tpsi(\dlQ^rx)t_+^{r(p-1)} \\
=&
\dlQ_{\omega\zeta(\omega^{-1}t_+)}(\tpsi x) \;+ \\
& \quad\quad\quad 
(-1)^m\biggl(
\beta\dlQ_{\omega\zeta(\omega^{-1}t_+),
   \btau(\omega\zeta(\omega^{-1}t_+),\omega^{-1}t_-)}(\tpsi x)
- \beta\dlQ_{\omega\zeta(\omega^{-1}t_+),t_-}(\tpsi x)
\biggr)   \\
=&
\sum_{\lceil m/2\rceil\leq k}\dlQ^k(\tpsi x)
\biggl(1\otimes(\omega\zeta(\omega^{-1}t_+))^{k(p-1)}\biggr) \\
&\quad\quad\quad
+ \sum_{r\geq0} \sum_{\lceil m/2\rceil\leq\ell}
(-1)^{m+r}\beta\dlQ^\ell(\tpsi x)
\biggl(
1\otimes\btau_rt_+^{p^r}(\omega\zeta(\omega^{-1}t_+))^{\ell(p-1)-1}
\biggr), \\[10pt]
\tpsi(\beta\dlQ_t(x))
=&
\sum_{\lceil m/2\rceil\leq r}\tpsi(\beta\dlQ^rx)t_+^{r(p-1)-1}t_- \\
=&\;\;\;
\beta\dlQ_{\omega\zeta(\omega^{-1}t_+),t_-}(\tpsi x) \\
=&
\sum_{\lceil m/2\rceil\leq s}\beta\dlQ^s(\tpsi x)
\biggl(1\otimes(\omega\zeta(\omega^{-1}t_+))^{s(p-1)-1}t_-\biggr).
\end{align*}
\end{thm}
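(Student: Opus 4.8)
The plan is to run the proof of Theorem~\ref{thm:Coaction-DL-2} in the odd-primary setting, again starting from the master formula~\eqref{eq:Coaction-Powop}: for a class $e\in H_*(D_{C_p}S^m)$ with $\Theta^e(x)=\dlQ^rx$ or $\beta\dlQ^rx$ and $\psi(e)=\sum_i\theta_i\otimes e_i$, one has $\tpsi(\Theta^e(x))=\sum_i(1\otimes\chi(\theta_i))\,\Theta^{e_i}(\tpsi x)$. So everything comes down to computing the left coaction $\psi$ on the classes of $H_*(D_{C_p}S^m)$ carrying the Dyer--Lashof operations, encoding the result in the generating series $\dlQ_t$ and $\beta\dlQ_t$ in the variables $t_+,t_-$, and then applying $\chi$ and a switch. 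I would use the equivalence $D_{C_p}S^m\simeq S^m\wedge E\Sigma_p\ltimes_{C_p}(W^m)^\dag$ together with the Thom isomorphism $H_*(BC_p)\iso H_{*+m(p-1)}(E\Sigma_p\ltimes_{C_p}(W^m)^\dag)$ to transport the Milnor--Boardman coaction~\eqref{eq:Coaction-odd} (equivalently its right-handed form~\eqref{eq:R-Coaction-odd}) from $H_*(BC_p)$ to the Thom-shifted generators of $H_*(D_{C_p}S^m)$. Under this transport the images of $a_{(2r-m)(p-1)}$ and $a_{(2r-m)(p-1)-1}$ are, up to the normalisation $(-1)^r\nu(m)$ recorded above, precisely the classes defining $\dlQ^rx$ and $\beta\dlQ^rx$ (via $x\mapsto x_m^{\otimes p}$ in the extended power).

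Two features demand care, exactly as at the prime~$2$. First, the Thom isomorphism is not a comodule map: the coaction on a Thom-shifted generator $\bar a_n$ differs from that on $a_n$ by the coaction on the Thom class of the bundle $E\Sigma_p\ltimes_{C_p}W^m\downarrow BC_p$, which through the complex decomposition $W\iso W_1\oplus\cdots\oplus W_{(p-1)/2}$ (with associated first Chern classes $y,2y,\ldots,\tfrac{p-1}{2}y$) is a twist by a power of $1+y^{p-1}$; after re-indexing this twist is what fixes the lower summation bounds $\lceil m/2\rceil$ in the final formulae, just as the analogous twist turns the $\RP^\infty$ coaction $\xi(t)^s$ into $\xi(t)^{s+m}$ in the proof of Theorem~\ref{thm:Coaction-DL-2}. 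Second, the rescaling $\zeta(t_+)\rightsquigarrow\omega\zeta(\omega^{-1}t_+)$ is forced by the sign $(-1)^r$ built into the normalisation $(-1)^r\nu(m)$ that matches $a_{(2r-m)(p-1)}$ with $\dlQ^rx$: since $\omega^{r(p-1)}=(-1)^r$, substituting $\omega^{-1}t_+$ for $t_+$ reproduces that sign inside $t_+^{r(p-1)}$, while the Frobenius identity $\omega^{p^r}=(-1)^r\omega$ shows it changes the $\zeta$-series only by the overall factor $\omega$; for $p=2$ one has $\omega=1$ and recovers~\eqref{eq:tpsiQ_t-2}. The overall sign $(-1)^m$ on the $\beta\dlQ$ terms should be a Koszul sign: for the even-type class carrying $\dlQ^rx$ the coaction's $\tau$-contribution must be moved past the degree-$m$ element $x$, whereas the odd-type class carrying $\beta\dlQ^rx$ receives only $\tau$-type contributions and so acquires no such sign, which is consistent with the absence of $(-1)^m$ from the formula for $\tpsi(\beta\dlQ_tx)$. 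Keeping $\nu(m)$, all of these signs, and the exterior generators $\tau_r,\btau_r$ straight through the calculation is where essentially all the work lies.

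Granting the twisted coaction on the Thom-shifted generators, I would substitute into~\eqref{eq:Coaction-Powop} as in the proof of Theorem~\ref{thm:Coaction-DL-2}: apply $\chi$ (which carries the $\xi(t_+)$-series to the $\zeta(t_+)$-series and $\tau(t)$ to $\btau(t)$) together with the switch, re-index, and recognise the resulting double sums as $\dlQ_{\omega\zeta(\omega^{-1}t_+)}(\tpsi x)$ and $\beta\dlQ_{\omega\zeta(\omega^{-1}t_+),\,\ast}(\tpsi x)$. The two places where the odd variable $t_-$ enters the right coaction~\eqref{eq:R-Coaction-odd} --- once inside $\btau(\zeta(t_+),t_-)$ and once as the separate term $-\xi(t_+)^{k-1}t_-\otimes a_{2k-1}$ --- are exactly what split off the two summands $\beta\dlQ_{\omega\zeta(\omega^{-1}t_+),\,\btau(\omega\zeta(\omega^{-1}t_+),\omega^{-1}t_-)}(\tpsi x)$ and $-\beta\dlQ_{\omega\zeta(\omega^{-1}t_+),\,t_-}(\tpsi x)$ in the formula for $\tpsi(\dlQ_tx)$; restricting to the odd-degree part (the coefficient of $t_-$) leaves only the separate-$t_-$ contribution and gives the stated formula for $\tpsi(\beta\dlQ_tx)$. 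Finally, comparing coefficients of monomials in $t_+$ and $t_-$ converts the generating-function identities into the explicit summation formulae, and the case of odd $m$ follows from the even case by a suspension argument, using that both the Dyer--Lashof operations and the coaction are compatible with $\sigma\:H_m(E)\to H_{m+1}(\Sigma E)$.
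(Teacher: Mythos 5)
Your proposal follows essentially the same route as the paper's own proof, which is itself only a brief outline: identify $D_{C_p}S^m$ with $S^m\wedge E\Sigma_p\ltimes_{C_p}(W^m)^\dag$, transport the Milnor coaction \eqref{eq:R-Coaction-odd} through the Thom isomorphism, observe that the normalisation $(-1)^r=(\omega^{-1})^{r(p-1)}$ accounts for the substitution $t_+\mapsto\omega^{-1}t_+$, and feed the result into \eqref{eq:Coaction-Powop}. You supply rather more detail than the paper does (the Thom-class twist, the Frobenius identity $\omega^{p^r}=(-1)^r\omega$, the provenance of the sign $(-1)^m$), and your suspension argument for odd $m$ is a harmless variant of the paper's appeal to virtual-bundle Thom spectra for $m<0$.
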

\begin{proof}[Outline of proof for $m\geq0$]
Using the description of an extended power $D_{C_p}S^m$
as a suspension of a truncated lens space, we can pull
back to $BC_p$. The origins of $\dlQ^rx,\beta\dlQ^rx$
then lies in the $m$-fold suspension of the elements
\[
(-1)^ra_{2(p-1)r} = (\omega^{-1})^{r(p-1)}a_{2(p-1)r},
\quad
(-1)^ra_{2(p-1)r-1} = (\omega^{-1})^{r(p-1)}a_{2(p-1)r-1}.
\]
As the map defining the Dyer-Lashof operations factors
through $D_{C_p}S^m\to D_pS^m$, the formulae follow
{}from~\eqref{eq:R-Coaction-odd-explicit}.

The case where $m<0$ can be proved using Thom spectra
of virtual bundles.
\end{proof}

The odd primary part of
Steinberger~\cite[theorem~III.2.2]{LNM1176} gives
the following result.
\begin{thm}\label{thm:Steinberger2.2-p}
For $r,s\geq1$,
\begin{align*}
\dlQ^{(p^s-1)/(p-1)}\tau_0 &= (-1)^s\btau_s, \\
\beta\dlQ^{(p^s-1)/(p-1)}\tau_0 &= (-1)^s\chi\xi_s
                                 = (-1)^s\zeta_s, \\
\beta\dlQ^r\tau_0 &\neq 0, \\
\dlQ^r\zeta_s &=
\begin{cases}
\ph{+1}(-1)^{s}\beta\dlQ^{r+(p^s-1)/(p-1)}\tau_0
                    & \text{\rm if $r\equiv-1\pmod{p^s}$}, \\
(-1)^{s+1}\beta\dlQ^{r+(p^s-1)/(p-1)}\tau_0
                    & \text{\rm if $r\equiv0\pmod{p^s}$}, \\
\ph{abcde}0 & \text{\rm otherwise},
\end{cases} \\
\dlQ^r\btau_s &=
\begin{cases}
(-1)^{s+1}\dlQ^{r+(p^s-1)/(p-1)}\tau_0
     & \text{\rm if $r\equiv0\pmod{p^s}$}, \\
\ph{abcde}0 & \text{\rm otherwise}.
\end{cases}
\end{align*}
In particular,
\[
\dlQ^{p^s}\zeta_s = \zeta_{s+1},
\qquad
\dlQ^{p^s}\btau_s = \btau_{s+1}.
\]
\end{thm}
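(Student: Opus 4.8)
The plan is to take the first five displayed identities as (a restatement of) Steinberger's computation, and then to derive the two ``In particular'' identities from them by a short arithmetic argument.

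The identities for $\dlQ^{(p^s-1)/(p-1)}\tau_0$, $\beta\dlQ^{(p^s-1)/(p-1)}\tau_0$, $\beta\dlQ^r\tau_0$, $\dlQ^r\zeta_s$ and $\dlQ^r\btau_s$ are, after matching conventions --- in particular noting that we use the conjugate Milnor generators $\zeta_s=\chi\xi_s$, $\btau_s=\chi\tau_s$ and that the normalising factor $\nu(m)$ was chosen so that the stated signs appear --- exactly the odd-primary content of \cite[theorem~III.2.2]{LNM1176}. The heart of that result rests on two inputs that I would quote rather than reprove: the geometric seed $\dlQ^1\tau_0=-\btau_1$ and $\beta\dlQ^1\tau_0=-\zeta_1$, read off from the canonical map $BC_p\to\Sigma H\F_p$ together with the identification of $H_*(D_pS^1)$ (the $m=1$, $r=1$ case of the calculus recalled just before the theorem); and the fact that $\mathcal{A}_*=H_*(H\F_p)$ is generated as an algebra under the Dyer--Lashof operations by $\tau_0$, so that any instance of the formula may be reduced --- via the Adem relations and instability ($\dlQ^ay=0$ when $2a<|y|$, $\dlQ^ay=y^p$ when $2a=|y|$) --- to an assertion about $\dlQ^\bullet\tau_0$; the surviving constraints ``$r\equiv0,-1\pmod{p^s}$'' are precisely the $p$-adic conditions that emerge from this reduction. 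Everything is consistent with Theorem~\ref{thm:CoactionDLp} applied to $x=\tau_0\in H_1(H\F_p)$, and $\tpsi$ is injective on $\mathcal{A}_*$ (it is $\mathrm{switch}\circ(\chi\otimes\id)$ applied to the coproduct $\psi$, which is split mono via the counit), so it even suffices to verify such identities after $\tpsi$; but since $\tau_0$ is primitive, the Cartan formula together with $\dlQ^k1=0=\beta\dlQ^k1$ for $k>0$ collapses $\dlQ^k(\tpsi\tau_0)$ and $\beta\dlQ^\ell(\tpsi\tau_0)$ to combinations of $\dlQ^\bullet\tau_0\otimes1$ and $1\otimes\dlQ^\bullet\tau_0$, so the coaction alone does not bootstrap the seed.

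Granting the first five identities, the two ``In particular'' ones follow at once. Put $r=p^s$ in the formula for $\dlQ^r\zeta_s$; since $p^s\equiv0\pmod{p^s}$ the case $r\equiv0$ applies and gives
\[
\dlQ^{p^s}\zeta_s=(-1)^{s+1}\beta\dlQ^{\,p^s+(p^s-1)/(p-1)}\tau_0.
\]
Now
\[
p^s+\frac{p^s-1}{p-1}=\frac{p^s(p-1)+p^s-1}{p-1}=\frac{p^{s+1}-1}{p-1},
\]
so the right-hand side equals $(-1)^{s+1}\beta\dlQ^{(p^{s+1}-1)/(p-1)}\tau_0=(-1)^{s+1}(-1)^{s+1}\zeta_{s+1}=\zeta_{s+1}$, by the second displayed identity with $s$ replaced by $s+1$. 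The computation of $\dlQ^{p^s}\btau_s$ is identical, using the $r\equiv0\pmod{p^s}$ case of the formula for $\dlQ^r\btau_s$ and then the first displayed identity: $\dlQ^{p^s}\btau_s=(-1)^{s+1}\dlQ^{(p^{s+1}-1)/(p-1)}\tau_0=(-1)^{s+1}(-1)^{s+1}\btau_{s+1}=\btau_{s+1}$.

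The only genuine obstacle lies in the two inputs behind Steinberger's theorem: the seed computation of $\dlQ^1\tau_0$ and $\beta\dlQ^1\tau_0$, and the Adem-relation bookkeeping that yields the precise congruence conditions and signs --- both are carried out in \cite[chapter~III]{LNM1176}. Re-deriving them from the present viewpoint would mean combining the coaction formula of Theorem~\ref{thm:CoactionDLp} with the (separately established) generation statement for $\mathcal{A}_*$ under power operations and with the odd-primary Adem relations; by contrast, the formal deduction of the ``In particular'' identities above is immediate.
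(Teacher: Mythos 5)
Your proposal is correct and matches the paper's treatment: the paper offers no proof of this theorem at all, simply attributing the five displayed identities to Steinberger \cite[theorem~III.2.2]{LNM1176} (and later only partially re-deriving the vanishing pattern for $\dlQ^r\zeta_s$ via Kochman's computations in Section~\ref{sec:DL-A*}). Your arithmetic deduction of the two ``In particular'' identities from the $r\equiv0\pmod{p^s}$ cases, using $p^s+(p^s-1)/(p-1)=(p^{s+1}-1)/(p-1)$ and the sign cancellation $(-1)^{s+1}(-1)^{s+1}=1$, is exactly the (unstated) step the paper relies on and is carried out correctly.
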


Our next result is analogous to Lemma~\ref{lem:DL-xi}.
\begin{lem}\label{lem:DL-xi-tau-p}
For $r\geq0$ and $s\geq1$,
\begin{subequations}\label{eq:DL-xi-tau-p}
\begin{align}
\label{eq:DL-xi-tau-p-1}
\dlQ^{p^r}\tau_r &= \tau_{r+1} - \tau_0\xi_{r+1}, \\
\label{eq:DL-xi-tau-p-2}
\beta\dlQ^{p^r}\tau_r &= \xi_{r+1}, \\
\label{eq:DL-xi-tau-p-3}
\dlQ^{p^s}\xi_s &= \xi_{s+1} - \xi_1\xi_s^p.
\end{align}
\end{subequations}
\end{lem}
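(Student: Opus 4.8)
The strategy is to mimic the proof of Lemma~\ref{lem:DL-xi}: use the antipode to rewrite $\tau_r,\xi_s$ in terms of the conjugate generators $\btau_j,\zeta_j$, apply $\dlQ^{p^r}$ (resp.\ $\dlQ^{p^s}$) term by term using the Cartan formula, kill most terms using Theorem~\ref{thm:Steinberger2.2-p}, and then re-assemble the answer as a conjugate expression. I would prove the three formulae in the order \eqref{eq:DL-xi-tau-p-2}, \eqref{eq:DL-xi-tau-p-1}, \eqref{eq:DL-xi-tau-p-3}, since the Bockstein formula \eqref{eq:DL-xi-tau-p-2} is the cleanest and the other two will use it and each other.

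First I would record the conjugation formulae coming from the coproduct on $\mathcal{A}_*$: for $r\geq0$,
\[
\tau_r = \btau_r + \sum_{j=1}^{r}\zeta_j\,\btau_{r-j}^{\,p^{j}}
\quad\text{(so in particular $\tau_0=\btau_0$)},
\qquad
\xi_s = \zeta_s + \sum_{j=1}^{s-1}\zeta_j\,\xi_{s-j}^{\,p^{j}}
\]
up to the standard sign conventions, which should be verified against $\psi\chi = (\text{unit})\circ\epsilon$. Next I would prove the auxiliary vanishing statement analogous to the one in Lemma~\ref{lem:DL-xi}: for $1\leq j\leq r$ (resp.\ suitable ranges), degree reasons force $\dlQ^{p^r}(\zeta_j\btau_{r-j}^{p^{j}}) = \dlQ^{p^r-1}(\text{something})\cdot(\cdots)$ terms to collapse, leaving only $\dlQ^{p^j}\zeta_j = \zeta_{j+1}$ and $\dlQ^{p^j}\btau_j=\btau_{j+1}$ type contributions from Theorem~\ref{thm:Steinberger2.2-p}. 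The Cartan formula applied to a product $uv$ with $|u|,|v|$ fixed leaves only a few admissible splittings of the excess, exactly as at $p=2$.

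With these in hand, applying $\beta\dlQ^{p^r}$ to $\tau_r = \btau_0\cdot(\text{nothing})$… more precisely to the conjugate expansion, the Bockstein Cartan formula together with $\beta\dlQ^{(p^s-1)/(p-1)}\tau_0 = (-1)^s\zeta_s$ and the vanishing of $\beta\dlQ^r\zeta_j$ in the relevant degrees should telescope everything down to $\zeta_{r+1} + (\text{correction}) = \xi_{r+1}$, using the conjugation formula in reverse. For \eqref{eq:DL-xi-tau-p-1}, applying $\dlQ^{p^r}$ to the same expansion and using \eqref{eq:DL-xi-tau-p-2} to identify the term $\tau_0\cdot\beta\dlQ^{p^r}(\cdots)$ should produce $\btau_{r+1} - \tau_0\xi_{r+1}$, which re-expands to $\tau_{r+1} - \tau_0\xi_{r+1}$ after noting that the $\zeta_j\btau_{r+1-j}^{p^j}$ pieces of $\tau_{r+1}$ reassemble. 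Finally \eqref{eq:DL-xi-tau-p-3} goes exactly as Lemma~\ref{lem:DL-xi}: expand $\xi_s$ via conjugation, apply $\dlQ^{p^s}$, use $\dlQ^{p^j}\zeta_j=\zeta_{j+1}$ and the auxiliary vanishing, and recognize $\zeta_{s+1} + \sum_{j}\zeta_{j+1}\xi_{s-j}^{p^{j+1}} + \zeta_1\xi_s^p = \xi_{s+1} + \zeta_1\xi_s^p$, giving $\dlQ^{p^s}\xi_s = \xi_{s+1} + (\zeta_1 - \xi_1)\xi_s^p = \xi_{s+1} - \xi_1\xi_s^p$ once one observes $\zeta_1 = -\xi_1$ at odd primes (in contrast to $\zeta_1=\xi_1$ at $p=2$, which is why the sign differs).

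**Main obstacle.** The delicate point is bookkeeping the signs: the odd-primary Koszul signs from the exterior generators $\tau_r$, the signs $(-1)^s$ and $(-1)^{s+1}$ littering Theorem~\ref{thm:Steinberger2.2-p}, and the sign in $\zeta_1=-\xi_1$ must all be tracked consistently, and it is easy to be off by a sign somewhere in the Bockstein Cartan formula $\beta\dlQ^n(uv) = \sum(\beta\dlQ^i u)(\dlQ^{j}v) \pm (\dlQ^i u)(\beta\dlQ^{j}v)$. A secondary nuisance is verifying that the degree-counting vanishing lemma genuinely eliminates \emph{all} intermediate Cartan terms and not merely most of them; at odd primes the excess can split in slightly more ways than at $p=2$, so this needs a careful (if routine) case check. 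I expect no conceptual difficulty beyond these computations, given Theorem~\ref{thm:Steinberger2.2-p} and the $p=2$ template.
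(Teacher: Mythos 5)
Your overall strategy --- expand $\tau_r$ and $\xi_s$ via the antipode recursion, apply $\dlQ^{p^r}$ with the Cartan formula, kill the intermediate terms by degree considerations, evaluate the survivors with Theorem~\ref{thm:Steinberger2.2-p}, and complete the recursion for $\tau_{r+1}$ and $\xi_{s+1}$ at the end --- is precisely the paper's proof; \eqref{eq:DL-xi-tau-p-1} and \eqref{eq:DL-xi-tau-p-3} are established there in exactly this way.

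However, one of your stated inputs is wrong in a way that would derail the computation if used literally. Your conjugation formula
\[
\tau_r = \btau_r + \sum_{j=1}^{r}\zeta_j\,\btau_{r-j}^{\,p^{j}}
\]
puts the Frobenius powers on the odd-degree generators. Since $\btau_{r-j}$ is exterior, $\btau_{r-j}^{p^j}=0$ for $j\geq1$ and $p$ odd, so this degenerates to $\tau_r=\btau_r$, which is false (already $\btau_0=-\tau_0$, not $\tau_0=\btau_0$ as you assert). The Milnor coproduct $\psi\tau_k=\tau_k\otimes1+\sum_i\xi_{k-i}^{p^i}\otimes\tau_i$ only ever raises the \emph{even} generators to $p$-th powers, so the correct recursion is
\[
\btau_r+\btau_{r-1}\xi_1^{p^{r-1}}+\cdots+\btau_0\xi_r+\tau_r=0,
\]
which is what the paper uses. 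This is not a sign to be chased down later: with the Frobenius on the wrong factor there are no cross terms at all, and your computation would return $\dlQ^{p^r}\tau_r=\btau_{r+1}$ instead of $\tau_{r+1}-\tau_0\xi_{r+1}$. Separately, for \eqref{eq:DL-xi-tau-p-2} the paper takes a shorter route than your proposed direct Bockstein--Cartan computation: it applies $\beta$ to the already-proved \eqref{eq:DL-xi-tau-p-1}, using that $\beta$ acts through the $\mathcal{A}^*$-action with $\beta\tau_0=-1$ and $\beta\xi_{r+1}=0$, so that $\beta(\tau_{r+1}-\tau_0\xi_{r+1})=\xi_{r+1}$ falls out immediately. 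Your order (prove \eqref{eq:DL-xi-tau-p-2} first and feed it into \eqref{eq:DL-xi-tau-p-1}) could be made to work, but only after the recursion above is corrected, and at the cost of extra bookkeeping.
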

\begin{proof}
Conjugation is defined by the recursive formulae
\begin{align*}
\btau_s + \btau_{s-1}\xi_1^{p^{s-1}} + \btau_{s-2}\xi_2^{p^{s-2}}
+ \cdots + \btau_0\xi_s + \tau_s &= 0, \\
\zeta_s + \zeta_{s-1}\xi_1^{p^{s-1}} + \zeta_{s-2}\xi_2^{p^{s-2}}
+ \cdots + \zeta_1\xi_{s-1}^{p} + \xi_s &= 0.
\end{align*}
Applying $\dlQ^{p^s}$ to the first equation, using
the Cartan formula and considering degrees carefully,
we obtain
\begin{align*}
\dlQ^{p^s}\tau_s &=
- \biggl(\dlQ^{p^s}\btau_s
+ (\dlQ^{p^{s-1}}\btau_{s-1})\xi_1^{p^{s}}
+ (\dlQ^{p^{s-2}}\btau_{s-2})\xi_2^{p^{s-1}}
+ \cdots + (\dlQ^1\btau_0)\xi_s^p\biggr) \\
&= - \biggl(\btau_{s+1}
+ \btau_{s}\xi_1^{p^{s}}
+ \btau_{s-1}\xi_2^{p^{s-1}}
+ \cdots + \btau_1\xi_s^p\biggr) \\
&= - \biggl(\btau_{s+1}
+ \btau_{s}\xi_1^{p^{s}}
+ \btau_{s-1}\xi_2^{p^{s-1}}
+ \cdots + \btau_1\xi_s^p + \btau_0\xi_{s+1}
+ \tau_{s+1}\biggr)
+ \btau_0\xi_{s+1} + \tau_{s+1} \\
&= \tau_{s+1} - \tau_0\xi_{s+1}, \\
\intertext{and}
\dlQ^{p^s}\xi_s &=
- \biggl(\dlQ^{p^s}\zeta_s
+ (\dlQ^{p^{s-1}}\zeta_{s-1})\xi_1^{p^{s}}
+ (\dlQ^{p^{s-2}}\zeta_{s-2})\xi_2^{p^{s-1}}
+ \cdots + (\dlQ^p\zeta_1)\xi_s^{p^2}\biggr) \\
&= - \biggl(\zeta_{s+1}
+ \zeta_{s}\xi_1^{p^{s}}
+ \zeta_{s-1}\xi_2^{p^{s-1}}
+ \cdots + \zeta_2\xi_s^{p^2}\biggr) \\
&= - \biggl(\zeta_{s+1}
+ \zeta_{s}\xi_1^{p^{s}}
+ \zeta_{s-1}\xi_2^{p^{s-1}}
+ \cdots + \zeta_2\xi_s^{p^2}
+ \zeta_1\xi_s^p + \xi_{s+1}\biggr)
+ \zeta_1\xi_s^p + \xi_{s+1} \\
&= \xi_{s+1} - \xi_1\xi_s^p.
\end{align*}
We also have
\begin{align*}
\beta\dlQ^{p^s}\tau_s &=
   \beta\tau_{s+1} - \beta(\tau_0\xi_{s+1}) \\
&= 0 - (\beta\tau_0)\xi_{s+1} + \tau_0(\beta\xi_{s+1}) \\
&= \xi_{s+1}, \\
\end{align*}
since the Bockstein $\beta$ acts on $\mathcal{A}_*$
by the left action of $\mathcal{A}^*$, i.e., if
$a\in\mathcal{A}_*$ and $\psi(a)=\sum_ia'_i\otimes a''_i$,
then
\[
\beta a =
 \sum_i\langle\beta,\chi(a'_i)\rangle a''_i,
\]
where $\langle-,-\rangle$ is the dual pairing between
$\mathcal{A}^*$ and $\mathcal{A}_*$. This gives
$\beta\tau_0=-1$ as used above.
\end{proof}

\section{Dyer-Lashof operations on the dual Steenrod algebra}
\label{sec:DL-A*}

We will require more information on the action of Dyer-Lashof
operations in $\mathcal{A}(p)_*$. In the discussion following
we make use of Kochman~\cite{SOK:DLops} and Steinberger~\cite{LNM1176}.

Let $\mathrm{R}$ be a commutative ring. Define the \emph{Newton
polynomials}
\[
\mathrm{N}_n(t) =
\mathrm{N}_n(t_1,\ldots,t_n) \in \mathrm{R}[t_1,\ldots,t_n]
\]
recursively by setting $\mathrm{N}_1(t)=t_1$ and
\[
\mathrm{N}_n(t) =
t_1\mathrm{N}_{n-1}(t) - t_2\mathrm{N}_{n-2}(t)
+ \cdots +
(-1)^{n-2}t_{n-1}\mathrm{N}_{1}(t)
+ (-1)^{n-1}nt_{n}.
\]
It is well known that for a prime~$p$,
\[
\mathrm{N}_{pn}(t) \equiv \mathrm{N}_n(t)^p \pmod{p},
\]
In $\mathcal{A}(p)_*$, we can consider the values
of these obtained by setting
\[
t_n =
\begin{cases}
\xi_r & \text{if $n=p^r-1$}, \\
\;0 & \text{otherwise},
\end{cases}
\]
and we denote these elements by $\mathrm{N}_{n}(\xi)$.
They satisfy recurrence relations of the form
\[
\mathrm{N}_{n}(\xi) =
-\xi_1\mathrm{N}_{n-p+1}(\xi) - \xi_2\mathrm{N}_{n-p^2+1}(\xi)
+ \cdots
\]
and in particular
\begin{align*}
\mathrm{N}_{p^s-1}(\xi) &=
-\xi_1\mathrm{N}_{p^s-p}(\xi) - \xi_2\mathrm{N}_{p^s-p^2}(\xi)
+ \cdots - \xi_{s-1}\mathrm{N}_{p^s-p^{s-1}}(\xi) - (p^s-1)\xi_s \\
&=
-\xi_1\mathrm{N}_{p^{s-1}-1}(\xi)^p - \xi_2\mathrm{N}_{p^{s-2}-1}(\xi)^{p^2}
+ \cdots - \xi_{s-1}\mathrm{N}_{p-1}(\xi)^{p^{s-1}} + \xi_s.
\end{align*}
Since $\mathrm{N}_{p-1}(\xi)=\xi_1=-\zeta_1$, it follows
that the negatives $-\mathrm{N}_{p^r-1}(\xi)$ satisfy the
same recurrence relation as the conjugates $\zeta_r=\chi(\xi_r)$,
hence for each $s\geq1$,
\begin{equation}\label{eq:Nn(xi)}
\mathrm{N}_{p^s-1}(\xi) = -\zeta_s.
\end{equation}
See~\cite[lemma~2.8]{HW-StAlg} for a closely
related result which also implies this one. We
also mention another easy consequence of the
recursion formula which can be verified by
working modulo the ideal
$(\xi_i:i\geq2)\lhd\mathcal{A}(p)_*$.
\begin{lem}\label{lem:Nnnot0}
For any prime~$p$ and any $k\geq1$,
\[
\mathrm{N}_{k(p-1)}(\xi) \neq 0.
\]
\end{lem}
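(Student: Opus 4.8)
The plan is to work modulo the ideal $I=(\xi_i:i\geq2)\lhd\mathcal{A}(p)_*$. Since each Newton polynomial $\mathrm{N}_n(\xi)$ involves only the $\xi_r$, its image lies in the quotient, which contains the polynomial algebra $\F_p[\xi_1]$, and there $\xi_1^k\neq0$ for every $k\geq1$. So it suffices to show $\mathrm{N}_{k(p-1)}(\xi)\equiv(\pm1)\,\xi_1^k\pmod{I}$.

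First I would note that under the substitution defining $\mathrm{N}_n(\xi)$, the only variable $t_j$ with $1\leq j\leq n$ that survives modulo $I$ is $t_{p-1}=\xi_1$: indeed $t_{p^r-1}=\xi_r\in I$ for $r\geq2$, while every other $t_j$ vanishes identically. Feeding this into the defining recursion $\mathrm{N}_n(t)=\sum_{j=1}^{n-1}(-1)^{j-1}t_j\mathrm{N}_{n-j}(t)+(-1)^{n-1}nt_n$, one reads off: $\mathrm{N}_n(\xi)\equiv0$ for $1\leq n\leq p-2$ (no surviving variable at all); $\mathrm{N}_{p-1}(\xi)\equiv(-1)^{p-2}(p-1)\xi_1=(\pm1)\xi_1$ from the final term alone; and, for $n>p-1$, only the $j=p-1$ term survives, giving $\mathrm{N}_n(\xi)\equiv(-1)^{p-2}\,\xi_1\,\mathrm{N}_{n-(p-1)}(\xi)\pmod{I}$. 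This last congruence is also visible directly from the displayed recursion $\mathrm{N}_n(\xi)=-\xi_1\mathrm{N}_{n-p+1}(\xi)-\xi_2\mathrm{N}_{n-p^2+1}(\xi)+\cdots$, since every term after the first lies in $I$.

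Then I would iterate this congruence with $n=k(p-1)$: applying it $k-1$ times brings us down to $\mathrm{N}_{p-1}(\xi)$, so $\mathrm{N}_{k(p-1)}(\xi)\equiv(\pm1)\,\xi_1^k\not\equiv0\pmod{I}$, and therefore $\mathrm{N}_{k(p-1)}(\xi)\neq0$ in $\mathcal{A}(p)_*$.

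There is no genuine obstacle here; the only care needed is bookkeeping of signs and of the mod-$p$ coefficients. For $p=2$ all signs are irrelevant in $\F_2$, and for odd $p$ one checks that $(p-1)\equiv-1\not\equiv0\pmod p$, so the scalars occurring are units. The one structural point to emphasise is that, modulo $I$, the recursion for $\mathrm{N}_n(\xi)$ never introduces any Milnor generator other than $\xi_1$, which is exactly what forces the image to be a nonzero power of $\xi_1$.
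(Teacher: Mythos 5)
Your argument is correct and is exactly the route the paper indicates: the lemma is stated there with only the one-line hint that it ``can be verified by working modulo the ideal $(\xi_i:i\geq2)\lhd\mathcal{A}(p)_*$'', and you have simply filled in the details. Reducing mod that ideal so that the Newton recursion collapses to $\mathrm{N}_{k(p-1)}(\xi)\equiv(\pm1)\,\xi_1^k$, which is visibly nonzero in the quotient, is precisely the intended verification.
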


The generating series for the $(-1)^n\mathrm{N}_n(\xi)$
satisfies the relation
\[
\biggl(1+\sum_{r\geq1}\xi_rt^{p^r-1}\biggr)
   \biggl(\sum_{n\geq1}(-1)^n\mathrm{N}_n(\xi)t^n\biggr)
= \sum_{r\geq1}\xi_rt^{p^r-1},
\]
hence
\begin{equation}\label{eq:Nn(xi)-genfunc}
\sum_{n\geq1}(-1)^n\mathrm{N}_n(\xi)t^n =
1 - \biggl(1+\sum_{r\geq1}\xi_rt^{p^r-1}\biggr)^{-1}
= 1- \frac{t}{\xi(t)}.
\end{equation}

We will give formulae for the $\mathrm{N}_n(\xi)$
modulo the ideal~$(\zeta_j:j\neq s)\lhd\mathcal{A}(p)_*$,
for some fixed $s\geq1$. The recursive formula
for the antipode of $\mathcal{A}(p)_*$ gives
\[
\xi_{ns} \equiv -\zeta_s\xi_{(n-1)s}^{p^s}
                \mod{(\zeta_j:j\neq s)},
\]
and an induction shows that
\begin{equation}\label{eq:xismodxetas}
\xi_{ns} \equiv (-1)^n\zeta_s^{(p^{ns}-1)/(p^s-1)}
                \mod{(\zeta_j:j\neq s)}.
\end{equation}
Combining this with~\eqref{eq:Nn(xi)-genfunc}
we obtain
\begin{equation}\label{eq:Nn(xi)-genfunc-modzetas}
\sum_{n\geq1}(-1)^n\mathrm{N}_n(\xi)t^n \equiv
1 -
\biggl(1+
\sum_{r\geq1}(-1)^r\zeta_s^{(p^{rs}-1)/(p^s-1)}t^{p^{rs}-1}\biggr)^{-1}
    \mod{(\zeta_j:j\neq s)}.
\end{equation}

Our next result is a number theoretic observation.
\begin{lem}\label{lem:BinomCoeff}
Let $p$ be a prime and let $s\geq1$. Suppose that
the natural number $n$ has $p$-adic expansion
\[
n = n_kp^k + n_{k+1}p^{k+1} + \cdots + n_{k+\ell}\,p^{k+\ell}
\]
where $\ell\geq0$ and $n_k,n_{k+\ell}\not\equiv0\pmod{p}$.
Then
\[
\dbinom{np^s-1}{n}\not\equiv0\pmod{p}
\IFF
n_{s+k} \leq n_k-1,\; n_{s+k+1} \leq n_{k+1},
\;\ldots, \; n_{k+\ell} \leq n_{k+\ell-s}.
\]
\end{lem}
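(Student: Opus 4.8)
The plan is to reduce everything to Kummer's theorem (or equivalently Lucas's theorem) on $p$-adic valuations of binomial coefficients. Recall that $\binom{a}{b}\not\equiv 0\pmod p$ iff in the base-$p$ expansions of $a$ and $b$ there are no carries when adding $b$ to $a-b$; equivalently, by Lucas, iff every base-$p$ digit of $b$ is $\leq$ the corresponding digit of $a$. So the real content is purely a statement about the base-$p$ digits of $np^s-1$ versus those of $n$.

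First I would write out the base-$p$ expansion of $n$ as given, $n=\sum_{j=0}^{\ell} n_{k+j}p^{k+j}$ with $n_k,n_{k+\ell}$ nonzero mod $p$, so $n$ has digits $0$ in positions $0,\dots,k-1$, digits $n_{k+j}$ in positions $k,\dots,k+\ell$, and $0$ above. Multiplying by $p^s$ just shifts: $np^s$ has digit $n_{k+j}$ in position $k+s+j$ and $0$ elsewhere. Now subtract $1$: since $np^s$ has its lowest nonzero digit in position $k+s$ (that digit being $n_k\not\equiv 0$), the subtraction $np^s-1$ borrows all the way down — positions $0,\dots,k+s-1$ become $p-1$, the digit in position $k+s$ drops from $n_k$ to $n_k-1$, and all higher digits $n_{k+s+1},\dots,n_{k+s+\ell}$ are unchanged (here one uses $n_k\ge 1$, so no further borrow propagates; the case $n_k=1$ still works, the digit in position $k+s$ just becomes $0$). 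This gives a completely explicit digit list for $np^s-1$.

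Then I would apply Lucas's theorem to $\binom{np^s-1}{n}$: the condition is that each digit of $n$ be $\le$ the corresponding digit of $np^s-1$. For positions $0,\dots,k-1$ the digit of $n$ is $0$, automatically fine. For positions $k,\dots,k+s-1$ the digit of $np^s-1$ is $p-1\ge n_{k+j}$, again automatic (when those positions exist in $n$'s support, i.e.\ $j\le\ell$). The binding constraints come from the positions $k+s,\dots,k+s+\ell$: in position $k+s$ we need $n_{k}$ (digit of $n$ there, if $s\le\ell$, else $0$) $\le n_k-1$, i.e.\ $n_{k+s}\le n_k-1$; and in positions $k+s+i$ for $1\le i$ we need $n_{k+s+i}\le n_{k+i}$, reading the latter as $0$ once $k+i>k+\ell$. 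Collecting these reproduces exactly the stated inequalities $n_{s+k}\le n_k-1$, $n_{s+k+1}\le n_{k+1}$, \dots, $n_{k+\ell}\le n_{k+\ell-s}$ (the list being vacuous beyond where the right-hand digits run out). This establishes the equivalence.

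The main obstacle is purely bookkeeping: keeping the index ranges straight when $s$ is larger or smaller than $\ell$, and correctly handling the boundary digit in position $k+s$ in the edge case $n_k=1$. Nothing deep is involved once the borrow pattern in $np^s-1$ is written down; the only genuine input is Lucas/Kummer, which I would cite rather than prove. I would present the borrow computation carefully and then let the digit comparison be a one-line invocation of Lucas's theorem.
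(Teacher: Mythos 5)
Your proposal is correct and follows essentially the same route as the paper: write down the base-$p$ digits of $np^s-1$ (all positions below $s+k$ become $p-1$, position $s+k$ becomes $n_k-1$, higher digits shift up unchanged) and then invoke Lucas's theorem digit by digit. The only blemish is a small typo where you write ``we need $n_k$'' for the digit of $n$ in position $k+s$ when you mean $n_{k+s}$, which you immediately correct in the same clause.
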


\begin{proof}
The $p$-adic expansion of $np^s-1$ is
\begin{multline*}
np^s-1 =
(p-1) + (p-1)p + \cdots + (p-1)p^{s+k-1} \\
+ (n_k-1)p^{s+k} + n_{k+1}p^{s+k+1}
+ \cdots + n_{k+\ell}\,p^{s+k+\ell}
\ph{n_{k+\ell}\,p^{s+k+\ell}}
\end{multline*}
so
\[
\binom{np^s-1}{n} \equiv
\binom{p-1}{n_k}\cdots\binom{p-1}{n_{s+k-1}}
\binom{n_k-1}{n_{s+k}}
\binom{n_{k+1}}{n_{s+k+1}}\cdots\binom{n_{k+\ell-s}}{n_{k+\ell}}
\pmod{p}.
\]
This does not vanish mod~${p}$ exactly when
the stated conditions hold.
\end{proof}

For example, when $p=2$ and $s=1$,
\begin{equation}\label{eq:BinomCoeffvanishing-2}
\dbinom{2n-1}{n}\not\equiv0\pmod{2}
\IFF
\text{$n = 2^k$ for some $k\geq0$}.
\end{equation}

We will use this in proving our next result.
\begin{lem}\label{lem:N-InverseSeries}
Let $p$ be a prime and let $s\geq1$. Then
\begin{multline*}
\biggl(
1 +
\sum_{m\geq1}(-1)^m\zeta_s^{(p^{ms}-1)/(p^s-1)}t^{p^{ms}-1}
\biggr)^{-1} \\
\equiv
1 +
\sum_{\substack{n\in\N \\ \binom{np^s-1}{n}\not\equiv0\pmod{p}}}
                  (-1)^{n+1}\binom{np^s-1}{n}\zeta_s^{n}t^{n(p^s-1)}
                      \mod{(\zeta_j:j\neq s)}.
\end{multline*}
Hence
\[
\mathrm{N}_{n(p^s-1)}(\xi) \equiv
%(-1)^{n}
\dbinom{np^s-1}{n}\zeta_s^{n}
             \mod{(\zeta_j:j\neq s)},
\]
and this is non-zero precisely when the
coefficients in the $p$-adic expansion
\[
n=n_kp^k+\cdots+n_{k+\ell}\,p^{k+\ell}
\]
satisfy the inequalities
\[
n_{s+k} \leq n_k-1,\; n_{s+k+1} \leq n_{k+1},
\;\ldots, \; n_{k+\ell} \leq n_{k+\ell-s}.
\]
\end{lem}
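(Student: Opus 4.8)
The plan is to compute the formal inverse of the series $1 + \sum_{m\geq1}(-1)^m\zeta_s^{(p^{ms}-1)/(p^s-1)}t^{p^{ms}-1}$ by exploiting the fact that, modulo $(\zeta_j:j\neq s)$, it has an extremely simple closed form. First I would set $u = -\zeta_s t^{p^s-1}$ and observe that the exponent $(p^{ms}-1)/(p^s-1) = 1 + p^s + p^{2s} + \cdots + p^{(m-1)s}$ is exactly the exponent making $\zeta_s^{(p^{ms}-1)/(p^s-1)}t^{p^{ms}-1}$ equal to (a sign times) a power that telescopes: indeed the $m$-th term is $(-1)^m\zeta_s^{(p^{ms}-1)/(p^s-1)}t^{p^{ms}-1}$, and one checks directly from $p^{ms}-1 = (p^s-1)(1+p^s+\cdots+p^{(m-1)s})$ that this term is $u^{[m]}$ where $u^{[m]}$ denotes $u$ with each factor $t^{p^s-1}$ replaced by the appropriate Frobenius-twisted power. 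The cleanest route is to note that $\sum_{m\geq0}(-1)^m\zeta_s^{(p^{ms}-1)/(p^s-1)}t^{p^{ms}-1}$ is the value at our substitution of the formal series $\zeta(t)/t$-type expression; more precisely, comparing with \eqref{eq:xismodxetas} and \eqref{eq:Nn(xi)-genfunc}, the bracketed series is exactly $\xi(t)/t \bmod (\zeta_j:j\neq s)$ under the single-generator reduction, so its inverse is $t/\xi(t) \bmod (\zeta_j:j\neq s)$, and then \eqref{eq:Nn(xi)-genfunc} identifies the inverse with $1 - \sum_{n\geq1}(-1)^n\mathrm{N}_n(\xi)t^n$.

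With that identification in hand, the second step is to extract the coefficients explicitly. I would write the bracketed series as $1 - v$ where $v = \sum_{m\geq1}(-1)^{m+1}\zeta_s^{(p^{ms}-1)/(p^s-1)}t^{p^{ms}-1}$, expand $(1-v)^{-1} = \sum_{j\geq0}v^j$, and collect: a monomial $\zeta_s^n t^{n(p^s-1)}$ arises from choosing a composition of $n$ into parts each of the form $p^{m_i s}$-related exponents; but the key simplification, again modulo $(\zeta_j:j\neq s)$ and using that all the $\zeta_s$-powers are determined by the $t$-degree, is that the coefficient of $t^{n(p^s-1)}$ is forced to be a scalar multiple of $\zeta_s^n$. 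To pin down that scalar I would instead use the already-established closed form: the inverse equals $t/\xi(t)$, and reducing $\xi(t) = t(1 + \sum_{r\geq1}\zeta_s^{(p^{rs}-1)/(p^s-1)}(-1)^?t^{p^{rs}-1})^{\pm1}$-style — actually more efficiently, the negative of the inverse minus $1$ equals $\sum_n (-1)^n\mathrm{N}_n(\xi)t^n$ by \eqref{eq:Nn(xi)-genfunc}, so I just need the coefficient of $t^{n(p^s-1)}$ in $t/\xi(t) \bmod (\zeta_j:j\neq s)$. Writing $\xi(t)/t \equiv \sum_{r\geq0}\xi_{rs}t^{p^{rs}-1}$ with $\xi_{rs} \equiv (-1)^r\zeta_s^{(p^{rs}-1)/(p^s-1)}$ by \eqref{eq:xismodxetas}, this is the reciprocal of a series in the single variable $w := t^{p^s-1}$ with $\zeta_s$ carried along; a direct computation of $\left(\sum_{r\geq0}(-1)^r\zeta_s^{(p^{rs}-1)/(p^s-1)}w^{(p^{rs}-1)/(p^s-1)}\right)^{-1}$ gives the coefficient $(-1)^{n+1}\binom{np^s-1}{n}\zeta_s^n$ of $w^n$. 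The binomial coefficient $\binom{np^s-1}{n}$ appears because the reciprocal of $\sum_{r\geq0}(-x)^{(p^{rs}-1)/(p^s-1)}$-type series over $\F_p$ is governed by the $p$-adic/Lucas combinatorics already isolated in Lemma \ref{lem:BinomCoeff}; this is the step where I expect to have to be careful.

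The third and final step is routine: having shown $\mathrm{N}_{n(p^s-1)}(\xi) \equiv \binom{np^s-1}{n}\zeta_s^n \bmod (\zeta_j:j\neq s)$ — with the sign absorbed since $(-1)^{n}\mathrm{N}_n$ matches $(-1)^{n+1}$ times the inverse coefficient, and one notes $(-1)^{n}(-1)^{n+1} = -1$ is corrected by the $1-(\cdots)$ in \eqref{eq:Nn(xi)-genfunc} — and that $\mathrm{N}_m(\xi)=0$ for $m$ not of the form $n(p^s-1)$ in the relevant range by degree considerations modulo $(\zeta_j:j\neq s)$, the non-vanishing criterion is immediate from Lemma \ref{lem:BinomCoeff}: $\binom{np^s-1}{n}\not\equiv 0 \pmod p$ is equivalent to the stated inequalities on the $p$-adic digits $n_k,\ldots,n_{k+\ell}$ of $n$, and when this holds $\zeta_s^n \neq 0$ in $\mathcal{A}(p)_*$ since $\mathcal{A}(p)_*$ is polynomial on the $\xi_r$ (equivalently the $\zeta_r$). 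The main obstacle is the middle step: correctly organizing the reciprocal-series computation so that the single surviving monomial in each degree carries precisely the coefficient $\binom{np^s-1}{n}$, keeping track of signs and of the Frobenius-twisted exponents $(p^{rs}-1)/(p^s-1)$ throughout; once the bookkeeping is set up, Lemma \ref{lem:BinomCoeff} does the combinatorial work.
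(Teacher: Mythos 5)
Your first step is sound and agrees with the paper: by \eqref{eq:xismodxetas} the bracketed series is $\xi(t)/t$ reduced modulo $(\zeta_j:j\neq s)$, so its reciprocal is $t/\xi(t)$, and \eqref{eq:Nn(xi)-genfunc} then ties the coefficients of the reciprocal to the $\mathrm{N}_n(\xi)$. The final step (reading off non-vanishing from Lemma~\ref{lem:BinomCoeff}, plus $\zeta_s^n\neq0$ because $\mathcal{A}(p)_*$ is polynomial on the $\zeta_r$) is also fine.

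The gap is exactly where you say you "expect to have to be careful": you never actually compute the coefficient of $t^{n(p^s-1)}$ in the reciprocal. Expanding $(1-v)^{-1}=\sum_j v^j$ and summing over compositions of $n$ into parts $(p^{m_is}-1)/(p^s-1)$ with signs is a genuinely nontrivial combinatorial identity, and Lemma~\ref{lem:BinomCoeff} cannot supply it --- that lemma only decides \emph{when} $\binom{np^s-1}{n}$ is nonzero mod~$p$; it does not produce the binomial coefficient. The missing idea, which is how the paper does it, is Lagrange inversion in the form of a residue computation: writing the coefficient as $\oint \frac{t}{\xi(t)}\frac{dt}{t^{n(p^s-1)+1}}$ and substituting $t=\zeta(u)$ (using $\zeta'(u)\equiv1$ mod $p$) turns the problem of inverting an unknown series into extracting the coefficient of $u^{n(p^s-1)}$ in $\bigl(\zeta(u)/u\bigr)^{-n(p^s-1)}\equiv(1+\zeta_su^{p^s-1})^{-n(p^s-1)}$, and then the negative binomial expansion gives $-\binom{-n(p^s-1)}{n}\zeta_s^n=(-1)^{n+1}\binom{np^s-1}{n}\zeta_s^n$ directly. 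Without that substitution (or an equivalent device) your middle step is an assertion, not a proof.
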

\begin{proof}
We will use residue calculus to determine the
coefficient of $t^n$ of positive degree, and
will denote by
\[
\oint f(z)\,dz = c_{-1}
\]
the coefficient of $z^{-1}$ in a meromorphic 
Laurent series
\[
f(z) = \sum_{k_0\leq k\in\Z} c_kz^k
     \in\mathrm{R}[\![z]\!][z^{-1}],
\]
where $\mathrm{R}$ is any commutative ring and
$k_0\in\Z$. We may apply standard rules of Calculus
for manipulating such expressions. For example,
on changing variable by setting
$z=h(w)\in\mathrm{R}[\![w]\!][w^{-1}]$, we obtain
\[
\oint f(z)\,dz = \oint f(h(w))h'(w)\,dw.
\]
%Let us begin by calculation the coefficients of
%$\xi(t)\bmod{(\zeta_j:j\neq s)}\lhd\mathcal{A}_*$
%for some $s\geq1$. Taking $u=\xi(t)$ so $t=\zeta(u)$
%and $dt = \zeta'(u)\,du= du$, we have
%\begin{align*}
%\oint \frac{\xi(t)}{t^{n+1}}\,dt
%&= \oint\frac{u}{\zeta(u)^{n+1}}\,du \\
%&= \oint\biggl(\frac{\zeta(u)}{u}\biggr)^{-n-1}\,\frac{du}{u^{n}} \\
%&\equiv \oint\biggl(1+ \zeta_s u^{p^s-1}\biggr)^{-n-1}\,\frac{du}{u^{n}} \\
%&\equiv (-1)^{n-1}\binom{2n+1}{n-1}\zeta_s^{n/(p^s-1)} \\
%&\equiv (-1)^{n-1}\binom{2n+1}{n}\zeta_s^{n/(p^s-1)}
%   \mod{(\zeta_j:j\neq s)}.
%\end{align*}
%Clearly we only need to consider the case where
%$n=m(p^s-1)$ for some $m\in\N$, where we obtain
%\[
%\oint \frac{\xi(t)}{t^{m(p^s-1)+1}}\,dt \equiv
%-\binom{2m(p^s-1)+1}{m(p^s-1)}\zeta_s^m
%                       \mod{(\zeta_j:j\neq s)}.
%\]

We may determine the coefficient of $t^{n(p^s-1)}$
in $t/\xi(t)$ by calculating
\begin{align*}
\oint \frac{t}{\xi(t)}\,\frac{dt}{t^{n(p^s-1)+1}}
&= \oint\frac{\zeta(u)}{u}\,\frac{du}{\zeta(u)^{n(p^s-1)+1}} \\
&= \oint\biggl(\frac{\zeta(u)}{u}\biggr)^{-n(p^s-1)}\,\frac{du}{u^{n(p^s-1)+1}} \\
&\equiv \oint(1+\zeta_su^{p^s-1})^{-n(p^s-1)}\,\frac{du}{u^{n(p^s-1)+1}}  \\
&\equiv \oint(1+\zeta_sv)^{-n(p^s-1)}\,\frac{(-1)dv}{v^{n+1}} \\
&\equiv -\binom{-n(p^s-1)}{n}\zeta_s^n  \\
&\equiv (-1)^{n+1}\binom{n(p^s-1)+(n-1)}{n}\zeta_s^n \\
&\equiv (-1)^{n+1}\binom{np^s-1}{n}\zeta_s^n
                           \mod{(\zeta_j:j\neq s)}.
\end{align*}
Now we can use Lemma~\ref{lem:BinomCoeff} to complete
the analysis of these coefficients.
\end{proof}

%It follows that for $n\geq1$, if $n\neq p^s(p-1)$
%then
%\[
%\mathrm{N}_n(\xi) \equiv 0
%             \mod{(\zeta_2,\zeta_3,\ldots)},
%\]
%while
%\begin{equation}\label{eq:Nnmodzetas-final}
%\mathrm{N}_{p^s(p-1)}(\xi) \equiv -\zeta_1^{p^s}
%                   \mod{(\zeta_2,\zeta_3,\ldots)}.
%\end{equation}

To determine the Dyer-Lashof operations on $\mathcal{A}_*$
we use results of Kochman~\cite{SOK:DLops}, where
the operations are calculated in $H_*(BO;\F_2)$ and
$H_*(BU;\F_p)$. We remark that in each case the Thom
isomorphism is known to respect the Dyer-Lashof
operations, so this determines the Dyer-Lashof actions
in $H_*(MO;\F_2)$ and $H_*(MU;\F_p)$.

For $p=2$, $H_*(MO)=H_*(MO;\F_2)$ is the polynomial
algebra on generators $a_n\in H_n(MO)$ which correspond
to generators of $H_*(BO)$ coming from those in
$H_*(BO(1))=H_*(\RP^\infty)$. Under the homomorphism
$H_*(MO)\to\mathcal{A}_*$ induced by the orientation
$MO\to H\F_2$,
\[
a_n \mapsto
\begin{cases}
\xi_s & \text{if $n=2^s-1$}, \\
0     & \text{otherwise}.
\end{cases}
\]
The Newton polynomial
$\mathrm{N}_n(a)=\mathrm{N}_n(a_1,\ldots,a_n)\in H_n(MO)$
corresponds to the Hopf algebra primitive generator
in $H_n(BO)$, so~\cite[corollary~35]{SOK:DLops} gives
\[
\dlQ^r\mathrm{N}_n(a)
         = \binom{r-1}{n-1}\mathrm{N}_{n+r}(a).
\]
This yields the following formula in $\mathcal{A}_*$:
\begin{equation}\label{eq:DLop-Nn(xi)-2}
\dlQ^r\mathrm{N}_n(\xi)
         = \binom{r-1}{n-1}\mathrm{N}_{n+r}(\xi).
\end{equation}
Using~\eqref{eq:Nn(xi)} for $p=2$, we obtain
\begin{equation}\label{eq:DLop-zeta-2}
\dlQ^r\zeta_s
         = \binom{r-1}{2^s-2}\mathrm{N}_{2^s-1+r}(\xi),
\end{equation}
and it is easy to see that
\begin{equation}\label{eq:Qr-SOK-2}
\dbinom{r-1}{2^s-2}\equiv1\pmod{2}
\IFF
r\equiv0,-1\pmod{2^s}.
\end{equation}
Using Lemma~\ref{lem:Nnnot0}, this recovers part
of Steinberger's result, see our
Theorem~\ref{thm:Steinberger2.2}.

For an odd prime~$p$, $H_*(MU)$ is polynomial on
generators $b_n\in H_{2n}(MU)$ coinciding under
the Thom isomorphism with generators of $H_*(BU)$
coming from $H_*(BU(1))=H_*(\CP^\infty)$. Under
the homomorphism induced by the orientation
$MU\to H\F_p$,
\[
b_n \mapsto
\begin{cases}
\xi_s & \text{if $n=p^s-1$}, \\
0     & \text{otherwise},
\end{cases}
\]
and so by~\cite[theorem~5]{SOK:DLops}, the
Newton polynomial $\mathrm{N}_n(\xi)$ satisfies
\begin{equation}\label{eq:DLop-Nn(xi)-odd}
\dlQ^r\mathrm{N}_n(\xi) =
(-1)^{r+n}\binom{r-1}{n-1}
              \mathrm{N}_{n+r(p-1)}(\xi),
\end{equation}
and using~\eqref{eq:Nn(xi)} we obtain
\begin{equation}\label{eq:DLop-zeta-odd}
\dlQ^r\zeta_s =
(-1)^{r+1}\binom{r-1}{p^s-2}
         \mathrm{N}_{p^s-1+r(p-1)}(\xi).
\end{equation}
It is easy to see that
\begin{equation}\label{eq:Qr-SOK-odd}
\dbinom{r-1}{p^s-2}\not\equiv0\pmod{p}
\IFF
r\equiv0,-1\pmod{p^s},
\end{equation}
thus recovering part of Steinberger's result
(see Theorem~\ref{thm:Steinberger2.2-p}).

%\section{Junk}
%
%We define $q_{n}\in H_n(S/\!/2)$ for $n\geq1$
%using the generating series
%\[
%q(t) = \sum_{n\geq1} q_n t^n =
%\sum_{r\geq1}\dlQ^rx_1 tX(t)^r + \frac{t}{X(t)} - 1,
%\]
%where
%\[
%X(t) = t + \sum_{r\geq0}X_rt^{2^{r+1}}.
%\]
%\begin{lem}\label{lem:q-coactprim}
%The left coaction on $q(t)$ is given by
%\[
%\psi q(t) = \sum_{n\geq1} \psi q_n t^n
% = \zeta_1\otimes 1 t + \sum_{n\geq1} 1\otimes q_n t^n
% = \zeta_1\otimes 1 t + 1\otimes q(t).
%\]
%Hence for $n\geq2$, $q_n$ is coaction primitive:
%\[
%\psi q_n = 1\otimes q_n.
%\]
%\end{lem}
%\begin{proof}
%\end{proof}

\section{Verification of the Nishida relations}
\label{sec:NishidaRelns}

For completeness we show how the usual Nishida
relations are consequences of coaction formulae.

\subsection{The case $p=2$}\label{subsec:NishidaRelns-2}

First we recall that with respect to the monomial
basis for $\mathcal{A}_*=\mathcal{A}(2)_*$, the
dual element of $\xi_1^{r_1}\cdots\xi_\ell^{r_\ell}$
is $\Sq^{(r_1,\ldots,r_\ell)}\in\mathcal{A}^*$.
The dual of $\xi_1^n$ is the Steenrod operation
$\Sq^{(n)}=\Sq^n$, i.e.,
\[
\braket{\Sq^n | \xi_1^{r_1}\cdots\xi_\ell^{r_\ell}}
=
\begin{cases}
 1 & \text{if $\xi_1^{r_1}\cdots\xi_\ell^{r_\ell}=\xi_1^n$}, \\
 0 & \text{otherwise}.
\end{cases}
\]
In terms of the right pairing, this becomes
\[
\braket{\zeta_1^{r_1}\cdots\zeta_\ell^{r_\ell} | \Sq^n}
=
\begin{cases}
 1 & \text{if $\zeta_1^{r_1}\cdots\zeta_\ell^{r_\ell}=\zeta_1^n$}, \\
 0 & \text{otherwise}.
\end{cases}
\]
We will work with the right coaction so the
latter formulae will often be used.

Notice that for a left $\mathcal{A}_*$-comodule
$M_*$ and $x\in M_*$, we have
\begin{align}
\psi(x) &=
\sum_{(r_1,\ldots,r_\ell)}
\xi_1^{r_1}\cdots\xi_\ell^{r_\ell}
           \otimes\Sq^{(r_1,\ldots,r_\ell)}_*x,
                    \label{eq:psi-dual-2}\\
\tpsi(x) &=
\sum_{(r_1,\ldots,r_\ell)}
\Sq^{(r_1,\ldots,r_\ell)}_*x
           \otimes\zeta_1^{r_1}\cdots\zeta_\ell^{r_\ell}.
           \label{eq:tpsi-dual-2}
\end{align}
In particular
\[
\Sq^n_*x = \braket{\Sq^n\otimes1 | \psi x}
         = \braket{\tpsi x | 1\otimes\Sq^n}.
\]

We want to determine expressions of the form
$\Sq^r_*\dlQ^sx$ where $x\in H_*(E)$ for a
commutative $S$-algebra~$E$. We have
\[
\Sq^r_*\dlQ^sx = \braket{\tpsi\dlQ^s x | 1\otimes\Sq^r},
\]
and combining these for all values of $s$ we
obtain
\begin{align*}
\Sq^r_*\dlQ_t x =
\sum_{s}(\Sq^r_*\dlQ^sx) t^s
&= \Braket{ \sum_{s}\tpsi\dlQ^sx t^s | 1\otimes\Sq^r} \\
&= \Braket{ \sum_{k}\dlQ^k(\tpsi x) \zeta(t)^k | 1\otimes\Sq^r} \\
&= \Braket{ \dlQ_{\zeta(t)}(\tpsi x) | 1\otimes\Sq^r}.
\end{align*}

In the expression~\eqref{eq:tpsi-dual-2}, applying
$\dlQ_{\zeta(t)}$ to a term yields
\begin{align*}
\dlQ_{\zeta(t)}(\Sq^{(r_1,\ldots,r_\ell)}_*x
           \otimes\zeta_1^{r_1}\cdots\zeta_\ell^{r_\ell})
&=
\dlQ_{\zeta(t)}(\Sq^{(r_1,\ldots,r_\ell)}_*x)
\otimes
\dlQ_{\zeta(t)}(\zeta_1^{r_1}\cdots\zeta_\ell^{r_\ell}) \\
&=
\dlQ_{\zeta(t)}(\Sq^{(r_1,\ldots,r_\ell)}_*x)
\otimes
(\dlQ_{\zeta(t)}\zeta_1)^{r_1}\cdots(\dlQ_{\zeta(t)}\zeta_\ell)^{r_\ell},
\end{align*}
so we need to investigate the terms $\dlQ_{\zeta(t)}\zeta_s$.
In fact for our purposes it is sufficient to know these
$\bmod{(\zeta_j:j>1)}$.
\begin{lem}\label{lem:Qzeta(t)zeta_s-2}
For $s\geq2$,
\[
\dlQ_t\zeta_s \equiv 0 \mod{(\zeta_j:j>1)}.
\]
\end{lem}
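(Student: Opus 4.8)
The plan is to compute $\dlQ_t\zeta_s$ modulo the ideal $(\zeta_j : j > 1)$ by combining the formula~\eqref{eq:DLop-zeta-2}, which expresses $\dlQ^r\zeta_s$ as a binomial multiple of a Newton polynomial $\mathrm{N}_{2^s-1+r}(\xi)$, with the explicit congruence for $\mathrm{N}_{n(p-1)}(\xi) = \mathrm{N}_n(\xi)$ (at $p=2$) obtained in Lemma~\ref{lem:N-InverseSeries}. First I would recall that $\dlQ^r\zeta_s = \binom{r-1}{2^s-2}\mathrm{N}_{2^s-1+r}(\xi)$, and that this binomial coefficient is nonzero mod~$2$ precisely when $r \equiv 0, -1 \pmod{2^s}$, by~\eqref{eq:Qr-SOK-2}. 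So the only possibly nonzero terms in $\dlQ_t\zeta_s = \sum_r \dlQ^r\zeta_s\, t^r$ come from such $r$, and for those the relevant index is $n = 2^s - 1 + r$, which then satisfies $n \equiv 2^s - 1 \equiv -1 \pmod{2^s}$ (when $r \equiv 0$) or $n \equiv 2^s - 2 \pmod{2^s}$ (when $r \equiv -1$).

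Next I would invoke Lemma~\ref{lem:N-InverseSeries} with $s$ replaced by $1$: it gives $\mathrm{N}_n(\xi) \equiv \binom{2n'-1}{n'}\zeta_1^{n'}$ modulo $(\zeta_j : j \neq 1) \supseteq (\zeta_j : j > 1)$ when $n = n'$ is of the special form $n' \cdot 1$ (i.e. always, since $p-1 = 1$), and this is nonzero only when $n'$ is a power of $2$, by~\eqref{eq:BinomCoeffvanishing-2}. Hence $\mathrm{N}_n(\xi) \equiv 0 \pmod{(\zeta_j : j > 1)}$ unless $n$ itself is a power of $2$. Combining the two constraints: a term $\dlQ^r\zeta_s\, t^r$ survives mod $(\zeta_j : j>1)$ only if simultaneously $\binom{r-1}{2^s-2} \equiv 1 \pmod 2$ and $n = 2^s - 1 + r$ is a power of $2$. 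The heart of the argument is to show these two conditions are incompatible for $s \geq 2$.

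The main obstacle — and the only real content — is this incompatibility check. I would argue as follows: if $n = 2^s - 1 + r = 2^k$ for some $k$, then $r = 2^k - 2^s + 1$. For $\binom{r-1}{2^s-2}$ to be odd we need, by Lucas' theorem, that the binary digits of $2^s - 2 = 2^{s-1} + 2^{s-2} + \cdots + 2^1$ (a block of $1$'s in positions $1$ through $s-1$) all lie below the corresponding digits of $r - 1 = 2^k - 2^s$. Now $2^k - 2^s = 2^s(2^{k-s} - 1)$ has binary expansion consisting of $1$'s in positions $s, s+1, \ldots, k-1$ and $0$'s in positions $0$ through $s-1$; so its digits in positions $1$ through $s-1$ are all $0$, forcing $\binom{r-1}{2^s-2} \equiv 0$ as soon as $s \geq 2$ (so that the block positions $1,\ldots,s-1$ is nonempty). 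One must separately dispose of the edge cases where $k < s$ (then $n = 2^k < 2^s - 1 < 2^s - 1 + r$ for $r \geq 1$, impossible since $r \geq 0$ forces $n \geq 2^s - 1 \geq 3 > 2^k$ only for small $k$; a quick check handles $r = 0$, giving $n = 2^s - 1$, not a power of $2$ for $s \geq 1$) and $k = s$ (giving $r = 1$, and $\binom{0}{2^s-2} = 0$ for $s \geq 2$). Therefore every term vanishes and $\dlQ_t\zeta_s \equiv 0 \pmod{(\zeta_j : j > 1)}$, which is the assertion. I expect the write-up to be short, with the Lucas-theorem digit comparison being the one step needing care.
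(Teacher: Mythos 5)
Your proposal is correct and takes essentially the same route as the paper: both combine \eqref{eq:DLop-zeta-2} and \eqref{eq:Qr-SOK-2} with Lemma~\ref{lem:N-InverseSeries} (applied with $s=1$) to reduce everything to the fact that $\mathrm{N}_n(\xi)$ survives modulo $(\zeta_j:j>1)$ only when $n$ is a power of~$2$. The only cosmetic difference is the direction of the final incompatibility check --- the paper writes out the binary expansion of $n=2^s-1+r$ for $r\equiv0,-1\pmod{2^s}$ and observes it is never a power of~$2$ when $s\geq2$, whereas you assume $n=2^k$ and show via Lucas' theorem that $\binom{r-1}{2^s-2}$ must then be even.
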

\begin{proof}
By~\eqref{eq:Qr-SOK-2} and~\eqref{eq:DLop-zeta-2},
$\dlQ^r\zeta_s\neq0$ only when $r\equiv0,-1\pmod{2^s}$,
and then .
\[
\dlQ^r\zeta_s =
\begin{cases}
\mathrm{N}_{1+2+\cdots+2^{s-1}+2^k+r_{k+1}2^{k+1}+\cdots+2^\ell}(\xi)
          & \text{if $r\equiv0\pmod{2^s}$}, \\
\mathrm{N}_{2+\cdots+2^{s-1}+2^k+r_{k+1}2^{k+1}+\cdots+2^\ell}(\xi)
                  & \text{if $r\equiv-1\pmod{2^s}$},
\end{cases}
\]
for some $k,\ell$ with $s\leq k\leq\ell$. In either case
we find that $\dlQ^r\zeta_s\equiv0\bmod{(\zeta_j:j>1)}$
by using Lemma~\ref{lem:N-InverseSeries} (with $s=1$).
\end{proof}

For the case $s=1$, we have
\[
\dlQ_t\zeta_1 = \frac{1}{t}-\frac{1}{\xi(t)} + \zeta_1,
\]
hence
\begin{align*}
\dlQ_{\zeta(t)}\zeta_1
&= \frac{1}{\zeta(t)}-\frac{1}{t} + \zeta_1 \\
&\equiv \frac{1 - (1+\zeta_1 t) + \zeta_1(t+\zeta_1 t^2)}{(t+\zeta_1 t^2)} \\
&\equiv \frac{\zeta_1^2 t}{(1+\zeta_1 t)} \\
&\equiv \zeta_1^2 t(1+\zeta_1t)^{-1} \mod{(\zeta_j:j>1)}.
\end{align*}
So we have
\begin{align*}
\Sq^r_*\dlQ_t x
&=
\sum_{j\geq0}
     \Braket{\dlQ_{\zeta(t)}(\Sq^j_*x)(\dlQ_{\zeta(t)}\zeta_1)^j | 1\otimes\Sq^r} \\
&=
\sum_{j\geq0}\sum_k
\Braket{\zeta(t)^k(\dlQ_{\zeta(t)}\zeta_1)^{j} | 1\otimes\Sq^r}\dlQ^k\Sq^j_*x \\
&=
\sum_{j\geq0}\sum_k
\Braket{\zeta_1^{2j}t^{j+k}(1+\zeta_1 t)^{k-j} | 1\otimes\Sq^r}\dlQ^k\Sq^j_*x \\
&=
\sum_{j\geq0}\sum_k \binom{k-j}{r-2j}\dlQ^k\Sq^j_*x\,t^{r+k-j},
\end{align*}
or equivalently
\[
\Sq^r_*\dlQ^nx =
   \sum_{j\geq0} \binom{n-r}{r-2j}\dlQ^{n-r+j}\Sq^j_*x,
\]
which is the usual form of the Nishida relations.

\subsection{The case $p$ odd}\label{subsec:NishidaRelns-odd}

We begin by determining formulae for Dyer-Lashof operations
in $\mathcal{A}_*=\mathcal{A}(p)_*$ $\bmod{(\zeta_j:j\geq2)}$.
By~\eqref{eq:DLop-zeta-odd} and~\eqref{eq:Qr-SOK-odd} we
find that for an indeterminate~$t$ of degree~$-2$,
\begin{align*}
\dlQ_t\zeta_s &=
\sum_{k\geq1}
\biggl(
(-1)^{k+1}\binom{kp^s-1}{p^s-2}\mathrm{N}_{(kp^s+p^{s-1}+\cdots+p+1)(p-1)}(\xi)\,t^{kp^s(p-1)}\biggr. \\
& \ph{\binom{kp^s-1}{p^s-2}\mathrm{N}_{(kp^s+p^{s-1}+\cdots+p+1)(p-1)}}
\biggl. +
(-1)^{k}\binom{kp^s-2}{p^s-2}\mathrm{N}_{(kp^s+p^{s-1}+\cdots+p)(p-1)}(\xi)\,t^{(kp^s-1)(p-1)}
\biggr)                     \\
&\equiv
\sum_{k\geq1}
(-1)^{k}(\mathrm{N}_{(kp^{s-1}+p^{s-2}+\cdots+p+1)(p-1)}(\xi))^p\,t^{(kp^s-1)(p-1)}
\mod{(\zeta_j:j\geq2)},
\end{align*}
where the first term in each summand vanishes
thanks to Lemma~\ref{lem:N-InverseSeries}. Also,
when $s\geq2$, Lemma~\ref{lem:N-InverseSeries}
implies that
\begin{equation}\label{eq:Qtzetas=0}
\dlQ_t\zeta_s \equiv 0 \mod{(\zeta_j:j\geq2)}.
\end{equation}
When $s=1$,
\begin{align*}
\dlQ_t\zeta_1
&\equiv
\sum_{k\geq1}
(-1)^{k}(\mathrm{N}_{k(p-1)}(\xi))^p\,t^{(kp-1)(p-1)} \\
&\equiv
t^{-(p-1)}\biggl(\sum_{k\geq1}
\mathrm{N}_{k(p-1)}(\xi)\,(-t^{(p-1)})^k\biggr)^p  \\
&\equiv
t^{-(p-1)}\biggl(\sum_{k\geq1}
\mathrm{N}_{k(p-1)}(\xi)\,(\omega^{-1}t)^{k(p-1)}\biggr)^p
\mod{(\zeta_j:j\geq2)},
\end{align*}
where $\omega\in\F_{p^2}^\times$ is a primitive
$(p-1)$-th root of~$-1$ as introduced earlier.
Using~\eqref{eq:Nn(xi)-genfunc}, we obtain
\begin{equation}\label{eq:Qtzeta1-odd}
\dlQ_{t}\zeta_1 \equiv
-\frac{1}{(\omega^{-1}t)^{p-1}}
\biggl(1-\frac{\omega^{-1}t}{\xi(\omega^{-1}t)}\biggr)^p
\mod{(\zeta_j:j\geq2)}.
\end{equation}
Replacing $t$ by $\omega\zeta(\omega^{-1}t)$ gives
another useful formula:
\begin{align}\label{eq:Qzeta(t)zeta1-odd}
\dlQ_{\omega\zeta(\omega^{-1}t)}\zeta_1
&\equiv
-\frac{1}{\zeta(\omega^{-1}t)^{p-1}}
\biggl(1-\frac{\zeta(\omega^{-1}t)}{\omega^{-1}t}\biggr)^p
                                   \notag\\
&\equiv
\zeta_1^pt^{(p-1)^2}(1-\zeta_1t^{p-1})^{1-p} \\
&\equiv
\sum_{k\geq0}\binom{p-2+k}{k}\zeta_1^{p+k}t^{(p+k-1)(p-1)}
\mod{(\zeta_j:j\geq2)}. \notag
\end{align}

Now we  follow a similar line of argument to that for
the case $p=2$ above. We recall that $\mathcal{A}(p)_*$
has a basis consisting of monomials
\[
\xi_1^{r_1}\cdots\xi_k^{r_k}\tau_0^{e_0}\cdots\tau_\ell^{e_\ell}
\]
where $e_i=0,1$ and $r_i\geq0$. In $\mathcal{A}(p)^*$,
the dual basis element is
$\mathcal{P}^{(r_1,\ldots,r_k;e_0,\ldots,e_\ell)}$. In
particular, $\mathcal{P}^{(r)} = \mathcal{P}^r$ is the
reduced power operation.

We want to determine the
series $\mathcal{P}^r_*\dlQ_tx$, and this turns out
to be given by
\begin{align*}
\mathcal{P}^r_*\dlQ_tx
&= \sum_{j\geq0}
\Braket{\dlQ_{\omega\zeta(\omega^{-1}t)}(\mathcal{P}^j_*x)
(\dlQ_{\omega\zeta(\omega^{-1}t)}\zeta_1)^j | \mathcal{P}^r}  \\
&=
\sum_{j\geq0}\sum_{k}
\Braket{
(\omega\zeta(\omega^{-1}t))^{k(p-1)}
(\dlQ_{\omega\zeta(\omega^{-1}t)}\zeta_1)^j
| \mathcal{P}^r}\dlQ^k\mathcal{P}^j_*x  \\
&=
\sum_{j\geq0}\sum_{k}
t^{(k+j(p-1))(p-1)}\Braket{
\zeta_1^{jp}
(1-\zeta_1t^{p-1})^{(k-j)(p-1)}
| \mathcal{P}^r} \dlQ^k\mathcal{P}^j_*x \\
&=
\sum_{j\geq0}\sum_{k}
t^{(k+j(p-1))(p-1)}
(-1)^{r-jp}\binom{(k-j)(p-1)}{r-jp}t^{(r-jp)(p-1)}
\dlQ^k\mathcal{P}^j_*x \\
&=
\sum_{j\geq0}\sum_{k}
(-1)^{j+r} t^{(k-j+r)(p-1)}
\binom{(k-j)(p-1)}{r-jp}
\dlQ^k\mathcal{P}^j_*x.
\end{align*}
Taking the coefficient of $t^{s(p-1)}$ by putting
$k=s-r+j$ we obtain
\[
\mathcal{P}^r_*\dlQ^sx =
\sum_{j\geq0}(-1)^{r+j}
\binom{(s-r)(p-1)}{r-jp}\dlQ^{s-r+j}\mathcal{P}^j_*x,
\]
which is the usual form of the Nishida relations.

We leave the interested reader to perform a similar
verification of the Nishida relations for
$\mathcal{P}^r_*\beta\dlQ^s$.

\section{Working modulo squares and Milnor primitives}
\label{sec:modsquares}

In this section we work at the prime~$2$, but there
are analogous results at odd primes. Let $E$ be a
commutative $S$-algebra.

As another example of the utility of our methods,
we will investigate the induced coaction
\[
\xymatrix{
H_*(E) \ar[r]_(.4){\psi}\ar@/^20pt/[rr]^{\Psi}
  & \mathcal{A}_*\otimes H_*(E) \ar[r]
  & \mathcal{E}_*\otimes H_*(E)
}
\]
where
\[
\mathcal{E}_* = \mathcal{A}_*/\!/\mathcal{A}_*^{(2)}
              = \mathcal{A}_*/(\zeta_s^2:s\geq1)
\]
is the exterior quotient Hopf algebra dual to the
sub-Hopf algebra of $\mathcal{A}^*$ generated by
the Milnor primitives $\q^r\in\mathcal{A}^{2^{r+1}-1}$
recursively defined by setting $\q^0=\Sq^1$ and
for $r\geq1$,
\[
\q^r = [\q^{r-1},\Sq^{2^r}]
    = \q^{r-1}\Sq^{2^r} + \Sq^{2^r}\q^{r-1}.
\]
To avoid cumbersome notation, we will write $u\circeq v$
in place of $u\equiv v\bmod{(\zeta_s^2:s\geq1)}$ when
working with the quotient ring $\mathcal{E}_*$, and
identify elements of $\mathcal{A}_*$ with their residue
classes.

As with $\psi$, there is a corresponding right
coaction
\[
\xymatrix{
H_*(E) \ar[r]_(.4){\tpsi}\ar@/^20pt/[rr]^{\tPsi}
  & H_*(E)\otimes\mathcal{A}_* \ar[r]
  & H_*(E)\otimes\mathcal{E}_*
}
\]

Our interest is in the general form of the right
coaction on elements of the form $\dlQ^Iz$, or
equivalently in $\q^r_*\dlQ^Iz$ for $r\geq0$. Of
course it is well known that
\[
\q^0_*\dlQ^az = \Sq^1_*\dlQ^az = (a+1)\dlQ^{a-1}z.
\]
Using the monomial basis in the residue classes
$\bar{\xi}_i$ of the $\xi_i$, $\q^r$ is dual to
the residue class of
%\[
%\xi_r = \chi(\zeta_r)
%= \zeta_1\xi_{r-1}^2 + \cdots+ \zeta_{r-1}\xi_1^{2^{r-1}}
%  + \zeta_r
%\equiv \zeta_r \mod{(\zeta_i^2:i\geq1)}.
%\]
\[
\xi_{r+1} = \chi(\zeta_{r+1})
= \zeta_1\xi_{r}^2 + \cdots+ \zeta_{r}\xi_1^{2^{r}}
  + \zeta_{r+1}
\equiv \zeta_{r+1} \mod{(\zeta_i^2:i\geq1)}.
\]
Hence $\xi_{r+1}\circeq\zeta_{r+1}$ is primitive
in the quotient Hopf algebra $\mathcal{E}_*$. To
calculate $\q^r_*w$ we may use the formulae
\[
\q^r_*w = \braket{\q^r\otimes1|\Psi w}
        = \braket{\tPsi w|1\otimes\q^r}.
\]

It is clear that the action of the Dyer-Lashof
operations descends from $\mathcal{A}_*$ to the
quotient $\mathcal{E}_*$. We start by determining
the image of $\dlQ^r\zeta_s$ in $\mathcal{E}_*$.
\begin{lem}\label{lem:dlQ^rzetas}
For $s\geq1$ and $r\geq s$, we have
\[
\dlQ^r\zeta_s \circeq
\begin{cases}
\zeta_{s+m} & \text{\rm if $r=2^{s+m}-2^s$}, \\
\;\;\;0 & \text{\rm otherwise}.
\end{cases}
\]
\end{lem}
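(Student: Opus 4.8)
The plan is to extract the needed congruence directly from the Kochman-type formula~\eqref{eq:DLop-zeta-2} together with the residue-calculus evaluation of Newton polynomials in Lemma~\ref{lem:N-InverseSeries}. First I would recall that by~\eqref{eq:DLop-zeta-2},
\[
\dlQ^r\zeta_s = \binom{r-1}{2^s-2}\,\mathrm{N}_{2^s-1+r}(\xi),
\]
and by~\eqref{eq:Qr-SOK-2} the binomial coefficient is nonzero $\bmod 2$ exactly when $r\equiv0,-1\pmod{2^s}$. So the only candidates for a nonzero residue class are $r=2^{s+m}-2^s$ (the case $r\equiv0$, since $2^{s+m}-2^s = 2^s(2^m-1)$ runs through the positive multiples of $2^s$) and, more generally, any $r$ of the form $2^s(\text{odd}\cdot 2^j \pm \text{adjustment})$; but the real work is in showing that \emph{almost all} of these actually die in $\mathcal{E}_* = \mathcal{A}_*/(\zeta_j^2:j\geq1)$, leaving only $r=2^{s+m}-2^s$ with value $\zeta_{s+m}$.

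Second I would invoke Lemma~\ref{lem:N-InverseSeries} with its parameter equal to $1$, which tells us that modulo $(\zeta_j:j\neq1)$,
\[
\mathrm{N}_{n}(\xi) \equiv \binom{2n-1}{n}\zeta_1^{n}\quad\text{when }n\equiv0\pmod{2^1-1},
\]
i.e.\ for all $n$, and by~\eqref{eq:BinomCoeffvanishing-2} this is nonzero (mod $(\zeta_j:j\ne1)$) precisely when $n$ is a power of $2$. I want the statement modulo $(\zeta_j^2:j\geq1)$ rather than modulo $(\zeta_j:j\neq1)$, so I would argue that $\mathrm{N}_{2^s-1+r}(\xi)$, written in the monomial basis, consists of monomials all of which — except possibly a single generator $\zeta_N$ — contain a repeated factor. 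Concretely, using the recursion~\eqref{eq:Nn(xi)-genfunc} for $1 - t/\xi(t)$, one sees that $\mathrm{N}_n(\xi)$ is a polynomial in the $\xi_i$ whose only square-free monomial contribution is the linear term; and the generating-function identity~\eqref{eq:Nn(xi)} shows $\mathrm{N}_{2^k-1}(\xi) = -\zeta_k = -\xi_k + (\text{decomposables})$, so in $\mathcal{E}_*$, $\mathrm{N}_n(\xi)\circeq \zeta_k$ if $n=2^k-1$ and $\mathrm{N}_n(\xi)\circeq 0$ otherwise. Then $2^s-1+r = 2^k-1$ forces $r = 2^k - 2^s$, i.e.\ $k=s+m$ with $m\geq0$ (using $r\geq s$, hence $r>0$, hence $m\geq1$ except that one should check the boundary), and in that case $\dlQ^r\zeta_s\circeq\binom{r-1}{2^s-2}\zeta_{s+m}$; finally one checks $\binom{2^{s+m}-2^s-1}{2^s-2}\equiv1\pmod2$ by Lucas's theorem, since the binary digits of $2^s-2 = 2+4+\cdots+2^{s-1}$ are dominated by those of $2^{s+m}-2^s-1 = 1 + 2 + \cdots + 2^{s-1} + 2^{s+1} + \cdots$. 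Wait — I need to recheck the digit count, but Lemma~\ref{lem:BinomCoeff} with $s=1$ is exactly the tool, or one can just cite~\eqref{eq:Qr-SOK-2}.

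The main obstacle I anticipate is the bookkeeping that isolates the \emph{unique} square-free monomial in $\mathrm{N}_n(\xi)$ and identifies when it is a single Milnor generator (equivalently, pinning down that $\mathrm{N}_n(\xi)\circeq 0$ for $n$ not of the form $2^k-1$, and $\circeq\zeta_k$ when it is). This is essentially the content of~\eqref{eq:Nn(xi)} combined with the fact that $\zeta_k\circeq\xi_k$ and both are primitive in $\mathcal{E}_*$; the subtlety is making sure no \emph{other} monomial of $\mathrm{N}_n(\xi)$ survives modulo squares, which I would handle by noting that every non-generator basis monomial appearing has total degree forcing at least one exponent $\geq2$ once we are in the exterior quotient — or more cleanly, by working modulo $(\xi_i:i\geq 2)$ as in Lemma~\ref{lem:Nnnot0} to locate the $\zeta_1$-power and then observing it is a square unless it is $\zeta_1$ itself, and symmetrically for higher generators via the substitution used to prove~\eqref{eq:xismodxetas}. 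Once that reduction is in hand, the proof is a one-line consequence of~\eqref{eq:DLop-zeta-2}, the mod-$2$ binomial criterion~\eqref{eq:Qr-SOK-2}, and Lemma~\ref{lem:N-InverseSeries}.
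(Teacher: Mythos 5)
Your proposal is correct and follows essentially the same route as the paper: reduce via \eqref{eq:DLop-zeta-2} and \eqref{eq:Qr-SOK-2} to the cases $r\equiv0,-1\pmod{2^s}$, and then show that $\mathrm{N}_n(\xi)\circeq\zeta_k$ when $n=2^k-1$ and $\circeq0$ otherwise. The paper gets that last step directly from the Newton recurrence together with $\mathrm{N}_{2n}(\xi)=\mathrm{N}_n(\xi)^2$ (every term $\xi_j\mathrm{N}_{n-2^j+1}(\xi)$ dies modulo squares except the terminal $n\xi_j$ when $n=2^j-1$), whereas your multinomial-coefficient argument for the uniqueness of the square-free (linear) term is a valid alternative; the opening detour through Lemma~\ref{lem:N-InverseSeries} modulo $(\zeta_j:j\neq1)$ is unnecessary and, as you note yourself, not by itself sufficient.
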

\begin{proof}
Using \eqref{eq:DLop-zeta-2} and \eqref{eq:Qr-SOK-2},
it suffices to consider the cases $r=2^sk,2^sk-1$.
The Newton recurrence formula gives
\begin{align*}
\dlQ^{2^sk}\zeta_s &= \mathrm{N}_{2^s(k+1)-1}(\xi) \\
&= \xi_1\mathrm{N}_{2^s(k+1)-2}(\xi)
+ \xi_2\mathrm{N}_{2^s(k+1)-4}(\xi) + \cdots \\
&= \xi_1\mathrm{N}_{2^{s-1}(k+1)-1}(\xi)^2
+ \xi_2\mathrm{N}_{2^{s-2}(k+1)-1}(\xi)^4 + \cdots
\end{align*}
and this is $0\bmod{(\zeta_i^2:i\geq1)}$ unless
$2^s(k+1)=2^{s+m}$, \ie, $k=2^m-1$, and then
\[
\dlQ^{2^{s+m}-2^s}\zeta_s \circeq \zeta_{s+m}.
\]
Also,
\[
\dlQ^{2^sk-1}\zeta_s = \mathrm{N}_{2^s(k+1)-2}(\xi)
                     = \mathrm{N}_{2^{s-1}(k+1)-1}(\xi)^2
                     \circeq 0.
\qedhere
\]
\end{proof}

Using the notation
\begin{align*}
\Xi(r) &= \sum_{r+1\leq k}\zeta_k t^{2^k-2^r}, \\
\Xi(r,s) &= \Xi(r) - \Xi(s)
          = \sum_{r+1\leq k\leq s}\zeta_k t^{2^k-2^r},
\end{align*}
where $0\leq r<s$, we obtain the following succinct
formula:
\begin{equation}\label{eq:dlQ^rzetas}
\dlQ_t\zeta_s \circeq
   \sum_{s+1\leq k} \zeta_k t^{2^k-2^s} = \Xi(s).
\end{equation}
If $s\geq1$,
\[
\Xi(s)^2 \circeq 0,
\]
hence when $s_1<s_2$,
\begin{equation}\label{eq:Xi-product}
\Xi(s_1)\Xi(s_2) \circeq \Xi(s_1,s_2)\Xi(s_2),
\end{equation}
we can derive another useful formula. If
$s_1<s_2<\cdots<s_\ell$, then
\begin{multline}\label{eq:dlQ^rzetas-prod}
\dlQ_t(\zeta_{s_1}\zeta_{s_2}\cdots\zeta_{s_\ell}) =
\dlQ_t(\zeta_{s_1})\dlQ_t(\zeta_{s_2})\cdots\dlQ_t(\zeta_{s_\ell})
\circeq
\Xi(s_1,s_2)\Xi(s_2,s_3)\cdots\Xi(s_{\ell-1},s_\ell)\Xi(s_\ell).
\end{multline}

Now we can give a formula for the $\mathcal{E}_*$-coaction.
\begin{prop}\label{prop:tpsi(dlQtz)}
If $z\in H_n(E)$, then
\[
\tPsi\dlQ_tz \circeq
\dlQ_t(\tPsi z) +
\sum_{a\geq n}\sum_{j\geq1}(a+1)\dlQ^{a-2^j+1}(\tPsi z)\zeta_j t^a.
\]
Equivalently, for each $a\geq n$,
\[
\tPsi\dlQ^a z \circeq
\dlQ^a(\tPsi z) + (a+1)\sum_{j\geq1}\dlQ^{a-2^j+1}(\tPsi z)\zeta_j.
\]
\end{prop}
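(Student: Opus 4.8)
The plan is to obtain the formula directly from the generating-function version of the dual Nishida relation at the prime~$2$, namely~\eqref{eq:tpsiQ_t-2} (which is just Theorem~\ref{thm:Coaction-DL-2} rewritten), by reducing it modulo squares; the one genuinely useful algebraic input is that the residue class of $\zeta(t)$ squares to $t^2$ in $\mathcal{E}_*[\![t]\!]$. First I would record that, for $z\in H_n(E)$, equation~\eqref{eq:tpsiQ_t-2} reads
\[
\tpsi(\dlQ_t z)=\sum_{n\leq r}\dlQ^r(\tpsi z)\,\zeta(t)^r
\]
in $(H_*(E)\otimes\mathcal{A}_*)[\![t]\!]$, where $\dlQ^r$ acts on the tensor product through the Cartan formula; since $\tpsi z$ has total degree $n$, one has $\dlQ^r(\tpsi z)=0$ for $r<n$, in agreement with the stated range. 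Because the Dyer--Lashof action and the coaction are compatible with the quotient $\mathcal{A}_*\to\mathcal{E}_*$ (as already observed in the text), applying that quotient gives
\[
\tPsi(\dlQ_t z)\circeq\sum_{n\leq r}\dlQ^r(\tPsi z)\,\zeta(t)^r ,
\]
now in $(H_*(E)\otimes\mathcal{E}_*)[\![t]\!]$, with $\zeta(t)=t+\sum_{s\geq1}\zeta_s t^{2^s}$ read via residue classes.

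The key step is the observation that, in characteristic~$2$ and modulo the squares $\zeta_s^2$,
\[
\zeta(t)^2=t^2+\sum_{s\geq1}\zeta_s^2\,t^{2^{s+1}}\circeq t^2 ,
\]
so that $\zeta(t)^r\circeq t^r$ when $r$ is even, while for odd $r$,
\[
\zeta(t)^r=\zeta(t)^{r-1}\zeta(t)\circeq t^{r-1}\zeta(t)
         =t^r+\sum_{s\geq1}\zeta_s\,t^{r-1+2^s}.
\]
Substituting this into the displayed expression separates $\tPsi(\dlQ_t z)$ into $\sum_{n\leq r}\dlQ^r(\tPsi z)t^r=\dlQ_t(\tPsi z)$ plus the correction $\sum_{n\leq r,\ r\ \mathrm{odd}}\dlQ^r(\tPsi z)\sum_{s\geq1}\zeta_s\,t^{r-1+2^s}$. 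Extracting the coefficient of $t^a$ from the correction forces $r=a+1-2^s$; since $2^s$ is even for $s\geq1$, this $r$ is odd precisely when $a$ is even, so the $t^a$-coefficient equals $\dlQ^a(\tPsi z)+(a+1)\sum_{s\geq1}\dlQ^{a-2^s+1}(\tPsi z)\zeta_s$, with the convention $\dlQ^b(\tPsi z)=0$ for $b<n$. This is the second displayed formula in the Proposition, and summing over $a\geq n$ gives the first; the two are equivalent by comparison of coefficients of $t^a$.

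I do not expect a serious obstacle here. The only points needing care are that the reduction of~\eqref{eq:tpsiQ_t-2} modulo squares is legitimate — this uses that the Dyer--Lashof action descends to $\mathcal{E}_*$ and commutes with the quotient, which is part of the set-up — and the parity bookkeeping in the reindexing $r\leftrightarrow a$. A more computational alternative would be to expand $\tPsi z$ in the square-free monomial basis of $\mathcal{E}_*$ and evaluate $\dlQ_{\zeta(t)}$ on each tensor factor using Lemma~\ref{lem:dlQ^rzetas} together with~\eqref{eq:dlQ^rzetas} and~\eqref{eq:dlQ^rzetas-prod}; but the identity $\zeta(t)^2\circeq t^2$ is exactly what collapses that calculation, so the generating-function route seems preferable.
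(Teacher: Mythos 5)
Your proposal is correct and follows essentially the same route as the paper: both start from the generating-function coaction formula $\tpsi\dlQ_t z=\dlQ_{\zeta(t)}(\tpsi z)$ of~\eqref{eq:tpsiQ_t-2}, reduce modulo $(\zeta_s^2:s\geq1)$, and exploit the vanishing of squares to linearise $\zeta(t)^k$ before extracting the coefficient of $t^a$. The only cosmetic difference is that you organise the simplification via $\zeta(t)^2\circeq t^2$ and a parity split on the exponent, whereas the paper writes $\zeta(t)^k=t^k(1+\Xi(0))^k\circeq t^k(1+k\Xi(0))$; these yield the identical coefficient $(a+1)\bmod 2$.
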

\begin{proof}
This follows from the calculation
\begin{align*}
\tPsi\dlQ^a z
&\circeq \sum_{n\leq k\leq a}\dlQ^k(\tPsi z)
                     \left[(1+\Xi(0))^k\right]_{t^{a-k}} \\
&\circeq \dlQ^a(\tPsi z) +
\sum_{n\leq k\leq a-1}k\dlQ^k(\tPsi z)\left[\Xi(0)\right]_{t^{a-k}} \\
&\circeq \dlQ^a(\tPsi z) +
\sum_{n\leq a-2^j+1\leq a-1}(a-2^j+1)\dlQ^{a-2^j+1}(\tPsi z)\left[\Xi(0)\right]_{t^{2^j-1}} \\
&\circeq \dlQ^a(\tPsi z) +
\sum_{2\leq 2^j\leq a-n+1}(a+1)\dlQ^{a-2^j+1}(\tPsi z)\zeta_j.
\qedhere
\end{align*}
\end{proof}

We can use this to derive formulae for the action
of the Milnor primitives.
\begin{prop}\label{prop:q^s*Q^a}
If $z\in H_n(E)$, $s\geq0$ and $a>n$, then
\[
\q^s_*\dlQ^az =
(a+1)\dlQ^{a-2^{s+1}+1}z +
 \sum_{0\leq r\leq s-1}\dlQ^{a-2^{s+1}+2^{r+1}}(\q^r_*z).
\]
\end{prop}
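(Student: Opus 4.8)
The plan is to extract $\q^s_*\dlQ^a z$ from the $\mathcal{E}_*$-coaction formula of Proposition~\ref{prop:tpsi(dlQtz)} by pairing against $\q^s$ using the identity $\q^s_*w = \braket{\tPsi w \mid 1\otimes\q^s}$, exactly as in the Nishida-relation computations of Section~\ref{sec:NishidaRelns}. First I would recall from the discussion preceding this proposition that $\q^s$ is dual to the residue class $\bar\xi_{s+1}\circeq\zeta_{s+1}$ in $\mathcal{E}_*$, so that pairing a monomial $\zeta_1^{r_1}\cdots\zeta_\ell^{r_\ell}$ (with all $r_i\in\{0,1\}$, since we are in the exterior algebra $\mathcal{E}_*$) against $\q^s$ picks out exactly the linear term $\zeta_{s+1}$. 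Thus from
\[
\tPsi\dlQ^a z \circeq
\dlQ^a(\tPsi z) + (a+1)\sum_{j\geq1}\dlQ^{a-2^j+1}(\tPsi z)\,\zeta_j,
\]
I would apply $\braket{-\mid 1\otimes\q^s}$ termwise: the coalgebra structure of $\mathcal{E}_*$ is primitively generated on the $\zeta_k$, and $\tPsi z = \sum_I \dlQ^{?}\cdots$ is itself a sum of monomials in the $\zeta_k$ tensored with homology classes.

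The key computational step is to understand $\braket{\dlQ^{a-2^j+1}(\tPsi z)\,\zeta_j \mid 1\otimes\q^s}$. Here I would expand $\tPsi z = \sum_i (\q^{r_1}\cdots)_* z \otimes \zeta_1^{e_1}\cdots$ in the monomial basis and apply the Dyer-Lashof operations through the tensor via the Cartan formula, using Lemma~\ref{lem:dlQ^rzetas} (equivalently \eqref{eq:dlQ^rzetas}) to see that $\dlQ_t\zeta_k \circeq \Xi(k)$ has linear-in-$\zeta$ leading behavior, while products $\dlQ_t(\zeta_{s_1}\cdots\zeta_{s_\ell})$ contribute terms of $\zeta$-length $\geq 2$ except in controlled situations \eqref{eq:dlQ^rzetas-prod}. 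The upshot is that the only way to land on a pure $\zeta_{s+1}$ after the pairing is either (a) from the $\dlQ^a(\tPsi z)$ term, where the $z$-factor already contributes a $\q^r_* z$ with $r\leq s-1$ and $\dlQ^{\text{something}}\zeta_{s-r}$ supplies the remaining $\zeta_{s+1}$, or (b) from the $(a+1)\dlQ^{a-2^{s+1}+1}(\tPsi z)\zeta_j$ term with $j=s+1$ and the $\tPsi z$ contributing the counit part (i.e.\ $z$ itself with trivial $\zeta$-component). Matching degrees — $\q^s$ raises homology degree by $-(2^{s+1}-1)$, and $\dlQ^{a-2^{s+1}+2^{r+1}}$ combined with $\q^r_*$ must account for the total — pins down the indices $a-2^{s+1}+1$ and $a-2^{s+1}+2^{r+1}$ appearing in the statement.

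The main obstacle I expect is the bookkeeping in case (a): tracking which iterated application $\dlQ_t$ to a product $\zeta_{s_1}\cdots\zeta_{s_\ell}$ (coming from a basis monomial in $\tPsi z$) can produce a \emph{single} $\zeta_{s+1}$ after extracting the coefficient and pairing. By \eqref{eq:dlQ^rzetas-prod} a product of $\ell\geq2$ conjugate generators contributes terms that are products of $\ell$ factors $\Xi(\cdot)$; modulo $(\zeta_i^2)$ such a product has $\zeta$-length $\geq\ell$ unless cancellation via $\Xi(s_i)^2\circeq0$ reduces it — but it can never reduce below length $2$, so these never pair nontrivially with $\q^s$. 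Hence only $\ell=1$ survives, i.e.\ $\tPsi z$ is hit on its $\zeta_{s-r}$-linear part, which by definition of the comodule structure is $\q^{?}_* z$ for the appropriate index $r$. I would then just collect terms, use $a>n$ to ensure all the relevant $\dlQ$ are in the stable range and the error terms from the $\Xi(0)$-expansion in the proof of Proposition~\ref{prop:tpsi(dlQtz)} are negligible, and read off
\[
\q^s_*\dlQ^a z = (a+1)\dlQ^{a-2^{s+1}+1}z + \sum_{0\leq r\leq s-1}\dlQ^{a-2^{s+1}+2^{r+1}}(\q^r_* z),
\]
which completes the argument.
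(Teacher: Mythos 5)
Your proposal follows the paper's own argument essentially verbatim: pair $\tPsi\dlQ^a z$ against $1\otimes\q^s$, apply Proposition~\ref{prop:tpsi(dlQtz)}, observe that monomials of $\zeta$-length greater than one in $\tPsi z$ cannot contribute, and use Lemma~\ref{lem:dlQ^rzetas} (via $\dlQ^{2^{s+1}-2^{r+1}}\zeta_{r+1}\circeq\zeta_{s+1}$) together with the Cartan formula to identify the surviving terms. The approach and all the key ingredients match the paper's proof; only your passing reference to a ``$\zeta_{s-r}$-linear part'' is an indexing slip, which your final formula corrects.
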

\begin{proof}
We can determine $\q^s_*\dlQ^az$ using the inner
product, \ie,
\[
\q^s_*\dlQ^az = \braket{\tPsi\dlQ^a z|1\otimes\q^s}.
\]
By Proposition~\ref{prop:tpsi(dlQtz)}, we have
\[
\q^s_*\dlQ^az =
\braket{\dlQ^a(\tPsi z)|1\otimes\q^s}
              + (a+1)\dlQ^{a-2^{s+1}+1}z.
\]
To analyse $\braket{\dlQ^a(\tPsi z)|1\otimes\q^s}$,
we note that only the term in $\dlQ^a(\tPsi z)$ of
form $(?)\otimes\zeta_{s+1}$ can provide a non-zero
contribution, while in $\tPsi z$ any term of form
$(?)\otimes\zeta_{i_1}\cdots\zeta_{i_\ell}$ with
$\ell>1$ contributes zero. Since
$\dlQ^{2^{s+1}-2^{r+1}}(\zeta_{r+1})\circeq\zeta_{s+1}$
we must have
\begin{align*}
\braket{\dlQ^a(\tPsi z)|1\otimes\q^s}
&=
\sum_{r}\braket{\dlQ^a(\q^r_*z\otimes\zeta_r)|1\otimes\q^s} \\
&=
\sum_{r}\braket{\dlQ^{a-2^{s+1}+2^{r+1}}(\q^r_*z)
                              \otimes\zeta_{s+1}|1\otimes\q^s} \\
&= \sum_{r}\dlQ^{a-2^{s+1}+2^{r+1}}(\q^r_*z).
\qedhere
\end{align*}
\end{proof}

This result is useful when calculating with iterated
Dyer-Lashof operations. For example,
\begin{align*}
\q^1_*\dlQ^az &= (a+1)\dlQ^{a-3}z + \dlQ^{a-2}(\q^0_*z), \\
\q^1_*\dlQ^a\dlQ^bz &= (a+1)\dlQ^{a-3}\dlQ^bz + (b+1)\dlQ^{a-2}\dlQ^{b-1}z.
\end{align*}
In general, $\q^s_*\dlQ^{a_1}\cdots\dlQ^{a_{s+1}}z$
does not depend on the coaction on~$z$.

\part{Free commutative $S$-algebras}\label{part:3}

\section{Free commutative $S$-algebras and their homology}
\label{sec:PX}

Following \cite{EKMM} we work in the model categories
of left $S$-modules $\mathscr{M}=\mathscr{M}_S$ and
commutative $S$-algebras $\mathscr{C}=\mathscr{C}_S$.
The latter are the commutative monoids in $\mathscr{M}$.
The forgetful functor $\mathbb{U}\:\mathscr{C}\to\mathscr{M}$
has a left adjoint $\mathbb{P}\:\mathscr{M}\to\mathscr{C}$,
the free commutative $S$-algebra functor, giving a Quillen
adjunction.
\[
\xymatrix{
{\mathscr{C}} \ar@/_8pt/[rr]_{\mathbb{U}}
      && {\mathscr{M}} \ar@/_8pt/[ll]_{\mathbb{P}}
}
\]
For an $S$-module $X$,
\[
\mathbb{P}X = \bigvee_{j\geq 0} X^{(j)}/\Sigma_j,
\]
where $X^{(j)} = X\wedge \cdots \wedge X$ is the $j$-fold
smash power with its evident $\Sigma_j$-action, and
$X^{(j)}/\Sigma_j$ is the orbit spectrum. When $X$ is
cofibrant, the natural map $D_jX \to X^{(j)}/\Sigma_j$
is a weak equivalence, hence there is a weak equivalence
\[
\bigvee_{j\geq0}D_jX \xrightarrow{\;\sim\;} \mathbb{P}X.
\]

The mod~$p$ homology of extended powers $D_nX$ has been
studied extensively, and the answer is expressible in
terms of a free algebra construction. Recently, Kuhn \&
McCarty~\cite{NJK&JMcC:HF2InfLoopSpcs} gave an explicit
description for the prime~$2$, and we adopt a similar
viewpoint. Older references of relevance are
May~\cite{JPM:HomOps}, McClure~\cite[theorem~IX.2.1]{LNM1176},
and Kuhn~\cite{NJK:Transfer}. In keeping with our emphasis
on coactions and comodule structures, we phrase this in
terms of the dual Steenrod algebra, thus avoiding the
local finiteness condition for actions of the Steenrod
algebra.

Fix a prime~$p$ and let $\mathcal{A}_*=\mathcal{A}(p)_*$.
We adopt the following notation.
\begin{itemize}
\item
$\Comod_{\mathcal{A}_*}$ is the category of $\Z$-graded
right $\mathcal{A}_*$-comodules, where we denote the
coaction by $\Psi\:M_*\to M_*\otimes\mathcal{A}_*$.
\item
$\DLV$ is the category of graded $\F_p$-vector spaces
$V_*$ equipped with actions of Dyer-Lashof operations
$\dlQ^r\:V_*\to V_{*+2(p-1)r}$ and
$\beta\dlQ^r\:V_*\to V_{*+2(p-1)r-1}$ (when $p=2$,
$\dlQ^r\:V_*\to V_{*+r}$) subject to the Adem relations
and the unstable condition $\dlQ^r v = 0$ if $2r<|v|$
(when $p=2$, $\dlQ^r v = 0$ if $r<|v|$).
\item
$\Comod_{\mathcal{A}_*}^{\mathrm{DL}}$ is the full
subcategory of $\Comod_{\mathcal{A}_*}\cap\DLV$ which
consists of right $\mathcal{A}_*$-comodules with
Dyer-Lashof action that satisfies the formulae of
Theorem~\ref{thm:CoactionDLp} when~$p$ is odd, or
Theorem~\ref{thm:Coaction-DL-2} when~$p=2$.
\end{itemize}

The free algebra $\mathbb{P}X$ has a natural homotopy
coproduct
$\Delta\:\mathbb{P}X\to\mathbb{P}X\wedge\mathbb{P}X$
induced by the pinch map $X\to X\vee X$. The induced
homomorphism
\[
\Delta_*\:H_*(\mathbb{P}X)
   \to H_*(\mathbb{P}X)\otimes H_*(\mathbb{P}X)
\]
turns $H_*(\mathbb{P}X)=H_*(\mathbb{P}X;\F_p)$ into
cocommutative coalgebra, and so $H_*(\mathbb{P}X)$
is a bicommutative Hopf algebra. This structure is
discussed in detail in~\cite[section~2.3]{NJK&JMcC:HF2InfLoopSpcs}
at least for the prime~$2$. The component maps of
$\Delta$ are transfers associated to inclusions of
block subgroups $\Sigma_r\times\Sigma_s\leq\Sigma_{r+s}$
and the Dyer-Lashof operations on $H_*(\mathbb{P}X)$
satisfy a Cartan formula making it a bicommutative
$\mathcal{A}_*$-comodule Hopf algebra with Dyer-Lashof
action satisfying the restriction condition
$\dlQ^{|x|/2}x=x^p$ if $|x|$ is even (and $\dlQ^{|x|}x=x^2$
if $p=2$). We denote the category of all such bicommutative
Hopf algebras by $\HA^{\mathcal{A}_*,\,\mathrm{DL}}$.

There are two algebraic free functors that are relevant
here.
\begin{itemize}
\item
The left adjoint
\[
\mathbf{R}\:\Comod_{\mathcal{A}_*}
               \to\Comod_{\mathcal{A}_*}^{\mathrm{DL}}
\]
of the forgetful functor
$\Comod_{\mathcal{A}_*}^{\mathrm{DL}}\to\Comod_{\mathcal{A}_*}$;
this is a coproduct $\mathbf{R}=\bigoplus_s\mathbf{R}_s$
where the summand $\mathbf{R}_s$ is expressed in terms
of Dyer-Lashof words of length~$s$.
\item
The left adjoint
\[
\mathbf{U}\:\Comod_{\mathcal{A}_*}^{\mathrm{DL}}
            \to\HA^{\mathcal{A}_*,\,\mathrm{DL}}
\]
of the coalgebra primitives functor
$\mathbf{Pr}\:\HA^{\mathcal{A}_*,\,\mathrm{DL}}
            \to\Comod_{\mathcal{A}_*}^{\mathrm{DL}}$;
this involves the free graded commutative algebra
functor with additional relations coming from the
restriction condition.
\end{itemize}

The structure of $H_*(\mathbb{P}X)$ is given by
the next result.
\begin{thm}\label{thm:H*PX}
If\/ $X$ is cofibrant, then in\/ $\HA^{\mathcal{A}_*,\,\mathrm{DL}}$
there is a natural isomorphism
\[
\mathbf{U}(\mathbf{R}(H_*(X))) \iso H_*(\mathbb{P}X).
\]
\end{thm}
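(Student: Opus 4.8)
The plan is to build the isomorphism by adjunction, matching the left adjoints $\mathbb P$ (topological) and $\mathbf U\circ\mathbf R$ (algebraic) through the homology functor. First I would set up the comparison natural transformation: for cofibrant $X$ there is the stable splitting $\bigvee_{j\geq0}D_jX\xrightarrow{\sim}\mathbb PX$ recalled above, so $H_*(\mathbb PX)\iso\bigoplus_{j\geq0}H_*(D_jX)$ as $\mathcal A_*$-comodules. The generator $X\to\mathbb PX$ (the $j=1$ wedge summand, composed with the splitting map) induces a map of $\mathcal A_*$-comodules $H_*(X)\to H_*(\mathbb PX)$; since $H_*(\mathbb PX)$ lies in $\HA^{\mathcal A_*,\,\mathrm{DL}}$ by the structure discussion preceding the theorem (Cartan formula, restriction condition $\dlQ^{|x|/2}x=x^p$, and the coaction formulae of Theorems~\ref{thm:Coaction-DL-2} and~\ref{thm:CoactionDLp}), the universal properties of $\mathbf R$ and then $\mathbf U$ produce a canonical morphism
\[
\Phi_X\:\mathbf U(\mathbf R(H_*(X)))\longrightarrow H_*(\mathbb PX)
\]
in $\HA^{\mathcal A_*,\,\mathrm{DL}}$, natural in $X$. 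The bulk of the proof is then showing $\Phi_X$ is an isomorphism.

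**Reduction to spheres and a cell/colimit argument.**
Next I would reduce to the case $X=S^m$ (or a wedge of spheres), using that both sides take cofibrant $X$ to functors that behave well under homotopy colimits: $\mathbb P$ is a left Quillen functor so preserves cofibrant objects and homotopy pushouts of cofibrations, and $H_*(-)$ preserves filtered homotopy colimits and sends cofibre sequences of cofibrations to short exact sequences after using that everything in sight is free/flat. On the algebraic side $\mathbf R$ and $\mathbf U$ are left adjoints, hence preserve colimits, and one checks they are compatible with the relevant filtrations. Writing $X$ as a homotopy colimit of its skeleta built from sphere cells, it suffices to treat $X=S^m$, $m\in\Z$. (Negative-dimensional spheres are handled as in the proof of Theorem~\ref{thm:CoactionDLp}, via Thom spectra of virtual bundles.)

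**The case $X=S^m$.**
For $X=S^m$ I would invoke the classical computation of $H_*(D_jS^m)$ in terms of admissible Dyer-Lashof monomials (May, McClure~\cite[theorem~IX.2.1]{LNM1176}, and Kuhn~\cite{NJK:Transfer}; for $p=2$, Kuhn--McCarty~\cite{NJK&JMcC:HF2InfLoopSpcs}): a basis for $H_*(\mathbb PS^m)=\bigoplus_j H_*(D_jS^m)$ is given by products of $\dlQ^I x_m$ (and $\beta$-variants at odd $p$) over admissible $I$ of excess $\geq m$ (with the restriction $\dlQ^{|y|/2}y=y^p$ built in), which is exactly an $\F_p$-basis for $\mathbf U(\mathbf R(\F_p\{x_m\}))=\mathbf U(\mathbf R(H_*(S^m)))$ by construction of $\mathbf R_s$ (Dyer-Lashof words of length $s$, subject to Adem and instability) and $\mathbf U$ (free graded-commutative with the restriction relation). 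Thus $\Phi_{S^m}$ is a bijection on this basis, hence an algebra isomorphism; that it respects the $\mathcal A_*$-coaction and the $\dlQ$-action is automatic since $\Phi$ is a morphism in $\HA^{\mathcal A_*,\,\mathrm{DL}}$, and that it respects the coproduct follows because the coproduct on $H_*(\mathbb PX)$ is the transfer coproduct, which on Dyer-Lashof generators is determined by the same Cartan-type formula that defines the Hopf structure on $\mathbf U(\mathbf R(-))$.

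**Main obstacle.**
The delicate point is not the sphere computation (which is in the literature) but verifying that the comparison transformation $\Phi$ is genuinely a map in $\HA^{\mathcal A_*,\,\mathrm{DL}}$ and that both functors are compatible with the cell-by-cell colimit filtration, including at odd primes where the $\beta\dlQ^r$ operations and the sign conventions in Theorem~\ref{thm:CoactionDLp} must be tracked; equivalently, one must check that the coaction-on-$\dlQ$ formulae that cut out the subcategory $\Comod_{\mathcal A_*}^{\mathrm{DL}}$ are exactly the ones satisfied by $H_*(\mathbb PX)$, so that $\mathbf R$ and $\mathbf U$ are the right algebraic targets. This is where the coaction formulae established earlier in the paper (Theorems~\ref{thm:Coaction-DL-2} and~\ref{thm:CoactionDLp}) do the essential work: they are precisely what makes $H_*(\mathbb PX)\in\Comod_{\mathcal A_*}^{\mathrm{DL}}$ and hence what makes the adjunction argument go through.
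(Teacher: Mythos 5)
The paper does not actually prove this theorem: it states it and defers to \cite{TAQI} and the classical references (May, McClure, Kuhn, Kuhn--McCarty), remarking only that it is an abstract repackaging of the free-algebra-on-admissible-monomials description. Your overall architecture --- a comparison map $\Phi_X$ obtained from the unit $H_*(X)\to H_*(\mathbb{P}X)$ via the universal properties of $\mathbf{R}$ and $\mathbf{U}$, followed by a computation showing it is bijective --- is exactly the standard route those references take, and your treatment of the sphere/wedge case and of why $\Phi$ automatically respects all the structure is fine.

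The genuine gap is in your reduction step. You claim one may induct over a cell structure because ``$H_*(-)$ sends cofibre sequences of cofibrations to short exact sequences'' and because $\mathbb{P}$ preserves homotopy pushouts. Neither assertion does the work you need: the connecting maps in the mod~$p$ homology long exact sequence of a skeletal filtration need not vanish (cellular attaching maps can be nonzero mod~$p$, so you would at least need a minimal cell structure), and even when they do vanish, $H_*$ of the homotopy pushout $\mathbb{P}X\wedge_{\mathbb{P}S^m}\mathbb{P}D^{m+1}$ is computed by a K\"unneth spectral sequence with higher $\Tor$ terms over $H_*(\mathbb{P}S^m)$ --- this is precisely the nontrivial computation of \cite[theorem~1.7]{BP-Einfinity} that the paper invokes for $S/\!/2$ and $S/\!/p$ in Sections~7--8, not a formal consequence of left-adjointness. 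The standard (and much cleaner) reduction bypasses cells entirely: over the field $\F_p$ one has $H_*(D_jX)\iso H_*(\Sigma_j;H_*(X)^{\otimes j})$ via the chain-level model $W\otimes_{\Sigma_j}C_*(X)^{\otimes j}$ with $W$ a free resolution, so $H_*(\mathbb{P}X)$ depends functorially only on the quasi-isomorphism type of $C_*(X;\F_p)$, which is $H_*(X)$ with zero differential --- i.e.\ the chain complex of a wedge of spheres. That observation reduces the theorem to the wedge-of-spheres case in one step, where your basis-matching argument then applies. As written, your colimit/cell argument would not close without either this chain-level input or a substantial $\Tor$ computation that you have not supplied.
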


Of course this is an abstract version of a description
in terms of a free algebra on admissible Dyer-Lashof
monomials applied to elements of $H_*(X)$ with suitable
excess conditions; see~\cite{TAQI} for details.

\section{Sample calculations for $p=2$}\label{sec:Sample-2}

In this section we take $p=2$, and assume that all spectra
are localised at~$2$.

Consider the commutative $S$-algebra $S/\!/2$ obtained
as the pushout in the diagram of commutative $S$-algebras
\[
\xymatrix{
& \ar[dl]\mathbb{P}S^0
 \ar@{}[dr]|{\PO}
 \ar@{ >->}[r]^{}\ar@{ >->}[d]
        & \mathbb{P}D^1\ar@{ >->}[d] \\
S &\ar@{->>}[l]_(.4){\sim} \tilde{S} \ar@{ >->}[r] & S/\!/2
}
\]
in which $S^0\xrightarrow{\;\sim\;}S$ is the functorial
cofibrant replacement of $S$ as an $S$-module, $S^0\to D^1\sim*$
is the functorial cofibrant replacement for the collapse
map $S^0\to*$, the diagonal map is induced from a
realisation of the degree~$2$ map $S^0\to S$, and $\tilde{S}$
is defined using the functorial factorisation in the model
category $\mathscr{C}_S$. It follows that $S/\!/2$ is cofibrant
in $\mathscr{C}_S$, and furthermore there is an isomorphism
of commutative $S$-algebras
\[
S/\!/2\iso \tilde{S}\wedge_{\mathbb{P}S^0}\mathbb{P}D^1.
\]
This description allows a calculation of homology using
the K\"unneth spectral sequence. Since the degree~$2$
map induces the trivial map in mod~$2$ ordinary homology,
we can determine $H_*(S/\!/2)=H_*(S/\!/2;\F_2)$ with the
aid of~\cite[theorem~1.7]{BP-Einfinity}. The answer is
\[
H_*(S/\!/2) =
   \F_2[\dlQ^Ix_1:\text{$I$ admissible, $\exc(I)>1$}],
\]
where $x_1\in H_1(S/\!/2)$ satisfies $\Sq^1_*x_1=1$.

Our formulae for the right coaction give
\begin{align*}
\tpsi \dlQ^rx_1 &=
\sum_{1\leq k\leq r}
\dlQ^k(x_1\otimes1 + 1\otimes\zeta_1)[1\otimes\zeta(t)^k]_{t^r} \\
&=
\sum_{1\leq k\leq r}
\biggl[\dlQ^kx_1\otimes\zeta(t)^k
         + 1\otimes(\dlQ^k\zeta_1)\zeta(t)^k\biggr]_{t^r}.
\end{align*}
For example
\begin{align*}
\tpsi \dlQ^2x_1 &=
\dlQ^2x_1\otimes1 + x_1^2\otimes\zeta_1 + 1\otimes(\zeta_1^3 + \zeta_2)
= \dlQ^2x_1\otimes1 + x_1^2\otimes\zeta_1 + 1\otimes\xi_2, \\
\tpsi \dlQ^3x_1 &= \dlQ^3x_1\otimes1 + 1\otimes\dlQ^3\zeta_1
                     = \dlQ^3x_1\otimes1 + 1\otimes\zeta_1^4, \\
\intertext{which give}
\psi\dlQ^2x_1 &=
1\otimes\dlQ^2x_1 + \zeta_1\otimes x_1^2 + \zeta_2\otimes1, \\
\psi\dlQ^3x_1 &= 1\otimes\dlQ^3x_1 + \zeta_1^4\otimes1.
\end{align*}

Using ideas of \cite{DJP:LondonConf,DJP:MSO&MSU}
we will give a description of $H_*(S/\!/2)$ as an
extended $\mathcal{A}_*$-comodule algebra which then
gives an explicit description of $S/\!/2$ as a wedge
of suspensions of $H\F_2$.

First we specify some elements, namely for $s\geq1$,
\[
X_s = \dlQ^{2^s}X_{s-1}
    = \dlQ^{2^s}\dlQ^{2^{s-1}}\cdots\dlQ^2x_1,
\]
where $X_0=x_1$. Notice that the degree of $X_s$
is $|X_s| = 2^{s+1}-1$.

\begin{defn}\label{defn:StrictAllow}
When considering an element of the form $\dlQ^Ix$,
we will say that it is \emph{strictly allowable}
if $I$ is admissible and $\exc(I)>|x|$.
\end{defn}
This is more stringent than the usual notion of
allowable where only $\exc(I)\geq|x|$ is required.
\begin{lem}\label{lem:Xs}
The Dyer-Lashof monomial\/ $\dlQ^rX_s$ is only
strictly allowable if $r=2^{s+1}$.
\end{lem}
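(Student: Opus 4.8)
The statement to prove is that $\dlQ^rX_s$ is strictly allowable (meaning the one-term admissible word $(r)$ prefixed to the admissible word $I$ with $\dlQ^IX_0 = X_s$, yielding an admissible word, and with excess exceeding $|X_s|$) only when $r = 2^{s+1}$. First I would make the degree bookkeeping explicit: since $|X_s| = 2^{s+1}-1$, any $\dlQ^rX_s$ has degree $r + 2^{s+1}-1$, and admissibility of $\dlQ^r\dlQ^{2^s}\cdots\dlQ^2x_1$ requires $r \le 2\cdot 2^s = 2^{s+1}$ (the top operation in $X_s$ is $\dlQ^{2^s}$, and for the prime~$2$ admissibility of a sequence $(r, 2^s, \ldots)$ means $r \le 2\cdot 2^s$). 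So the constraint $r \le 2^{s+1}$ is immediate; the content is the lower bound, i.e.\ that every $r$ with $r < 2^{s+1}$ fails the excess condition $\exc > |X_s| = 2^{s+1}-1$.

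**Key steps.** I would compute $\exc(\dlQ^rI)$ where $\dlQ^IX_0 = X_s$. Recall that for an admissible word $J = (j_1,\ldots,j_\ell)$ the excess is $\exc(J) = j_1 - (j_2 + \cdots + j_\ell)$ at the prime~$2$ (equivalently $j_1 - |\dlQ^{j_2}\cdots\dlQ^{j_\ell}x|$ relative to the class it is applied to). Here the word below $r$ is $(2^s, 2^{s-1}, \ldots, 2)$ applied to $x_1 \in H_1$, and $\sum_{i=1}^{s} 2^i = 2^{s+1}-2$, so $|X_s| = 1 + (2^{s+1}-2) = 2^{s+1}-1$, consistent with the stated degree. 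Then $\exc(\dlQ^rX_s) = r - |X_s| = r - (2^{s+1}-1)$. The strict-allowability condition $\exc(\dlQ^rX_s) > |X_s|$ becomes $r - (2^{s+1}-1) > 2^{s+1}-1$, i.e.\ $r > 2^{s+2} - 2$, i.e.\ $r \ge 2^{s+2}-1$. But this contradicts admissibility $r \le 2^{s+1}$ for every $s \ge 1$ — so I must be using the wrong excess convention or the wrong reading of ``$\dlQ^IX_s$''.

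**The real reading, and the obstacle.** The point is surely that $\dlQ^rX_s$ is itself already of the form $\dlQ^JX_0$ with $J = (r, 2^s, 2^{s-1},\ldots,2)$, and strict allowability is the condition $\exc(J) > |X_0| = |x_1| = 1$, together with admissibility of $J$. Admissibility of $J$ needs $r \le 2^{s+1}$ (the pairwise condition $j_i \le 2 j_{i+1}$ is automatic for the tail $2^s, 2^{s-1}, \ldots, 2$). The excess is $\exc(J) = r - (2^s + 2^{s-1} + \cdots + 2) = r - (2^{s+1}-2)$, and $\exc(J) > 1$ means $r > 2^{s+1}-1$, i.e.\ $r \ge 2^{s+1}$. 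Combined with admissibility $r \le 2^{s+1}$, this forces $r = 2^{s+1}$ exactly. That is the whole argument. The main obstacle is purely notational: pinning down, from Definition~\ref{defn:StrictAllow} and the recursive definition $X_s = \dlQ^{2^s}X_{s-1}$, that the admissible word in question is the full length-$(s{+}1)$ word $(r, 2^s, \ldots, 2)$ applied to $x_1$, so that the excess is measured against $|x_1| = 1$ and not against $|X_s|$; once that is fixed, the two inequalities $r \le 2^{s+1}$ (admissibility) and $r \ge 2^{s+1}$ (excess $> 1$) pincer $r$ to its unique value. I would also note in passing that when $r = 2^{s+1}$ one does get $X_{s+1}$, so this case genuinely occurs and the lemma is sharp.
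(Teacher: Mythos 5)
Your final argument is correct and is essentially the paper's proof: admissibility of $(r,2^s,\ldots,2)$ forces $r\leq 2\cdot 2^s=2^{s+1}$, while the excess condition forces $r>2^{s+1}-1$, pinning $r=2^{s+1}$. The detour in your middle paragraph (computing an ``excess'' as $r-|X_s|$ and comparing it to $|X_s|$) misreads Definition~\ref{defn:StrictAllow}, but you correctly identify and discard it, and in any case both legitimate readings --- $\exc(I)>|x_1|$ for the full word, or $r>|X_s|$ for the outermost operation --- give the same bound $r>2^{s+1}-1$.
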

\begin{proof}
If $\dlQ^rX_s$ is admissible then $r\leq2^{s+1}$,
while the excess condition holds only if $r>2^{s+1}-1$.
\end{proof}

Next we consider the coaction on these elements.
\begin{prop}\label{prop:psi(Xs)}
In the ring $H_*(S/\!/2)$, the sequence
$X_0,X_1,X_2,\ldots$ is regular, and generates
an ideal $J=(X_s:s\geq0)\lhd H_*(S/\!/2)$. The
left coaction on $X_s$ is given by
\begin{align}
\label{eq:psi(Xs)}
\psi X_s &=
1\otimes X_s + \zeta_1\otimes X_{s-1}^2
   + \zeta_2\otimes X_{s-2}^{2^2} + \cdots
   + \zeta_{s}\otimes X_0^{2^{s}}
   + \zeta_{s+1}\otimes 1 \\
   &\equiv \zeta_{s+1}\otimes 1 \mod{J}.
   \notag
\end{align}
\end{prop}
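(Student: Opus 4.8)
The plan is to prove the three assertions — regularity, the ideal structure, and the coaction formula \eqref{eq:psi(Xs)} — largely in parallel, with the coaction computation doing most of the work. First I would recall from the stated description
\[
H_*(S/\!/2) = \F_2[\dlQ^Ix_1:\text{$I$ admissible, $\exc(I)>1$}]
\]
that, by Lemma~\ref{lem:Xs}, each $X_s = \dlQ^{2^s}\cdots\dlQ^2x_1$ is itself one of these polynomial generators (the iterated operation is admissible since $2^{s}>2^{s-1}$, and the excess condition is met at each stage). Distinct $X_s$ are distinct polynomial generators, and a sequence of distinct polynomial generators in a polynomial ring over a field is automatically a regular sequence; this also shows $X_0,X_1,\ldots$ generate the ideal $J=(X_s:s\geq0)$, so the quotient $H_*(S/\!/2)/J$ is again polynomial on the remaining generators. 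The regularity and ideal statements therefore reduce to the bookkeeping that the $X_s$ are genuinely among the listed generators and pairwise distinct.

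The main content is \eqref{eq:psi(Xs)}, which I would prove by induction on $s$ using the coaction formula of Theorem~\ref{thm:Coaction-DL-2} (equivalently the right-coaction form, since $\psi$ and $\tpsi$ are interchanged by the antipode and a switch). The base case $s=0$ is $\psi x_1 = 1\otimes x_1$; indeed $x_1\in H_1(S/\!/2)$ is primitive because $\Sq^1_*x_1 = 1\in H_0$ lands in degree $0$, and in fact the stated relation with $\zeta_1$ only appears once we apply $\dlQ^2$ — this matches the sample computation $\psi\dlQ^2x_1 = 1\otimes\dlQ^2x_1 + \zeta_1\otimes x_1^2 + \zeta_2\otimes 1$ already recorded in the text, which is exactly \eqref{eq:psi(Xs)} for $s=1$. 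For the inductive step, assuming the formula for $X_{s-1}$, I would apply $\psi$ to $X_s = \dlQ^{2^s}X_{s-1}$ using Theorem~\ref{thm:Coaction-DL-2}: $\psi(\dlQ^{2^s}X_{s-1})$ is a sum over $k$ and $j$ of terms $\big[\xi(t)^k\big]_{t^{2^s}}\,\tQ^j\alpha_i\otimes\dlQ^{2^s-j}(X_{s-1})_i$, where $\psi X_{s-1}=\sum_i\alpha_i\otimes (X_{s-1})_i$. Feeding in the inductive expression for $\psi X_{s-1}$, whose $\alpha_i$ are the $\zeta_r$'s (and $1$), and whose components are the $X_{s-1-r}^{2^r}$ (and $X_{s-1}$), one must identify each resulting term. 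The term with $\alpha_i=1$, component $X_{s-1}$, $j=0$, $k=2^s$ produces $1\otimes\dlQ^{2^s}X_{s-1}=1\otimes X_s$; the terms with $\alpha_i=\zeta_{r}$, $r\geq1$, combined with $\dlQ^{2^s-j}$ applied to the corresponding power of an $X$, must be matched against $\zeta_r\otimes X_{s-r}^{2^r}$ and against the conjugation/Dyer-Lashof relations for $\zeta$'s supplied by Theorem~\ref{thm:Steinberger2.2}, Lemma~\ref{lem:DL-xi}, and the fact (used repeatedly in §\ref{sec:modsquares}) that $\dlQ^{2^s}$ of a $2^r$-th power is again a $2^{r+1}$-th power controlled by $\dlQ^{2^{s-r}}$ on the base; the new "top" term $\zeta_{s+1}\otimes 1$ arises from $\tQ^{2^s}$ or $\dlQ^{2^s}$ applied to $\zeta_s$, which by Theorem~\ref{thm:Steinberger2.2} (or Lemma~\ref{lem:DL-xi} after conjugating) equals $\zeta_{s+1}$ plus a decomposable, the decomposable cancelling against another contribution.

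I expect the combinatorial cancellation in this last identification to be the main obstacle: the formula of Theorem~\ref{thm:Coaction-DL-2} produces, a priori, many terms with binomial-coefficient weights $\big[\xi(t)^k\big]_{t^r}$, and one must check that after substituting the inductive form of $\psi X_{s-1}$ the only surviving terms are the clean ones in \eqref{eq:psi(Xs)}. Two features make this tractable: the degrees are rigid (each $X_j$ has odd degree $2^{j+1}-1$, so $X_j^2$ has degree $2^{j+2}-2$, and these force $j$ and $k$ uniquely once the total degree is fixed), and the relevant Dyer-Lashof values on the $\zeta$'s are exactly the ones computed in Theorem~\ref{thm:Steinberger2.2} and Lemma~\ref{lem:DL-xi}, where the "extra" term $\xi_1\zeta_?^2$-type decomposables cancel in pairs. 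Finally the congruence $X_s\equiv \zeta_{s+1}\otimes 1\pmod{J}$ is immediate, since every other term on the right of \eqref{eq:psi(Xs)} has a factor $X_j$ (or $X_j^{2^j}$) with $j\leq s$, hence lies in $H_*(S/\!/2)\otimes J$, while $1\otimes X_s$ lies in $J\otimes H_*(S/\!/2)\subseteq$ the ideal generated by $J$ in the tensor product; modulo $J$ on the appropriate side only $\zeta_{s+1}\otimes 1$ remains.
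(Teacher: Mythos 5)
Your overall strategy --- induction on $s$ via the coaction/Dyer--Lashof interchange of Theorem~\ref{thm:Coaction-DL-2}, with Lemma~\ref{lem:DL-xi} and Theorem~\ref{thm:Steinberger2.2} controlling the operations on the $\zeta_j$, and regularity coming from the $X_s$ being distinct polynomial generators --- is the same as the paper's. But your base case is wrong, and in a way that would derail the whole induction. You assert that $x_1$ is primitive, $\psi x_1=1\otimes x_1$, ``because $\Sq^1_*x_1=1$ lands in degree $0$''. The condition $\Sq^1_*x_1=1\neq0$ says exactly the opposite: by \eqref{eq:psi-dual-2}, $\Sq^1_*x_1$ is the coefficient of $\xi_1$ in $\psi x_1$, so
\[
\psi x_1 \;=\; 1\otimes x_1+\xi_1\otimes1 \;=\; 1\otimes X_0+\zeta_1\otimes1 ,
\]
which is precisely the $s=0$ instance of \eqref{eq:psi(Xs)} that the proposition asserts (the paper's induction starts from $\tpsi X_0=X_0\otimes1+1\otimes\xi_1$). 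Your claimed base case therefore contradicts the statement being proved at $s=0$, and it is also inconsistent with the sample computation you invoke: if $x_1$ were primitive, then $\tpsi\dlQ^2x_1=\sum_k\dlQ^k(x_1\otimes1)\,[1\otimes\zeta(t)^k]_{t^2}=\dlQ^2x_1\otimes1+x_1^2\otimes\zeta_1$, with no $1\otimes\xi_2$ term. All of the terms $\zeta_{s+1}\otimes1$ in \eqref{eq:psi(Xs)} descend from the $\xi_1\otimes1$ in $\psi x_1$ via $\dlQ^{2^s}\xi_s=\xi_{s+1}+\xi_1\xi_s^2$ (Lemma~\ref{lem:DL-xi}); with your base case the induction produces the wrong answer.

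Separately, the inductive step is only a sketch: you say the binomial-weighted terms ``must be matched'' and that the decomposables ``cancel in pairs'', but you do not exhibit the cancellation. The paper does this explicitly: for degree reasons only $k=2^{s+1}-1$ and $k=2^{s+1}$ contribute, giving $\tpsi X_{s+1}=(\dlQ^{2^{s+1}-1}\tpsi X_s)(1\otimes\xi_1)+\dlQ^{2^{s+1}}(\tpsi X_s)$; applying the Cartan formula termwise and pairing each $\xi_r^2\xi_1$ with $\dlQ^{2^r}\xi_r$ collapses the sum via Lemma~\ref{lem:DL-xi}. You would need to carry out this (or an equivalent) computation for a complete proof. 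The regularity/ideal discussion and the final reduction mod $J$ in your write-up are fine.
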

\begin{proof}
We begin with the formula
\[
\tpsi(X_0) = X_0\otimes 1 +1\otimes\zeta_1
           = X_0\otimes 1 +1\otimes\xi_1.
\]
We will verify by induction on $s$ that
\[
\tpsi X_s =
X_s\otimes 1 + X_{s-1}^2\otimes\xi_1
   + X_{s-2}^{2^2}\otimes\xi_2 + \cdots
   + X_0^{2^{s}}\otimes\xi_{s}
   + 1\otimes\xi_{s+1}.
\]
So assume that this holds for some $s\geq0$. We have
\begin{align*}
\tpsi X_{s+1} = \tpsi\dlQ^{2^{s+1}}X_s
&= (\dlQ^{2^{s+1}-1}\tpsi X_s)(1\otimes\xi_1)
   + \dlQ^{2^{s+1}}\tpsi X_s \\
&= X_s^2\otimes\xi_1 + X_{s-1}^{2^2}\otimes\xi_1^3
   + X_{s-2}^{2^3}\otimes\xi_2^2\xi_1 \\
& \quad\quad\quad
   + \cdots + X_0^{2^{s+1}}\otimes\xi_{s}^2\xi_1
   + 1\otimes\xi_{s+1}^2\xi_1  + X_{s+1}\otimes1 \\
& \quad\quad\quad
   +\dlQ^{2^{s+1}-2}(X_{s-1}^{2})\otimes\dlQ^2\xi_1
   + \dlQ^{2^{s+1}-2^2}(X_{s-2}^{2^2})\otimes\dlQ^{2^2}\xi_2 \\
& \quad\quad\quad
   + \cdots + \dlQ^{2^{s+1}-2^s}(X_0^{2^s})\otimes\dlQ^{2^s}\xi_2
   + 1\otimes\dlQ^{2^{s+1}}\xi_{s+1} \\
&= X_s^2\otimes\xi_1 + X_{s-1}^{2^2}\otimes\xi_1^3
   + X_{s-2}^{2^3}\otimes\xi_2^2\xi_1
   + \cdots + X_0^{2^{s+1}}\otimes\xi_{s}^2\xi_1   \\
& \quad\quad\quad
+ 1\otimes\xi_{s+1}^2\xi_1
+ X_{s+1}\otimes1 + X_{s-1}^{2^2}\otimes\dlQ^2\xi_1
+ X_{s-2}^{2^3}\otimes\dlQ^{2^2}\xi_2 \\
& \quad\quad\quad + \cdots + X_0^{2^{s+1}}\otimes\dlQ^{2^s}\xi_s
        +1\otimes\dlQ^{2^{s+1}}\xi_{s+1} \\
&= X_{s+1}\otimes1 + X_s^2\otimes\xi_1
   + X_{s-1}^{2^2}\otimes(\xi_1^3 + \dlQ^2\xi_1)
   + X_{s-2}^{2^3}\otimes(\xi_2^2\xi_1 + \dlQ^{2^2}\xi_2) \\
& \quad\quad\quad
   + \cdots +
     X_0^{2^{s+1}}\otimes(\xi_{s}^2\xi_1 + \dlQ^{2^s}\xi_s)
   + 1\otimes(\xi_{s+1}^2\xi_1 + \dlQ^{2^{s+1}}\xi_{s+1}) \\
&= X_{s+1}\otimes1 + X_s^2\otimes\xi_1
               + X_{s-1}^{2^2}\otimes\xi_2
               + X_{s-2}^{2^3}\otimes\xi_3 \\
& \quad\quad\quad + \cdots + X_0^{2^{s+1}}\otimes\xi_{s+1}
                  + 1\otimes\xi_{s+2},
\end{align*}
where we make use of Lemma~\ref{lem:DL-xi} in
the last step.
\end{proof}

Now consider the following composition of left
$\mathcal{A}_*$-comodule algebra homomorphisms
\[
\xymatrix{
H_*(S/\!/2) \ar[r]_(.4){\psi}\ar@/^18pt/[rr]^{\bar{\psi}}
& \mathcal{A}_*\otimes H_*(S/\!/2)\ar[r]_(.47){\mathrm{quo}}
& \mathcal{A}_*\otimes H_*(S/\!/2)/J
}
\]
where the second and third terms are extended left
comodules. By Proposition~\ref{prop:psi(Xs)} this
composition is an isomorphism of comodule algebras
\[
H_*(S/\!/2) \xrightarrow[\;\iso\;]{\bar{\psi}}
               \mathcal{A}_*\otimes H_*(S/\!/2)/J
\]
and there is a polynomial subalgebra $P_*\subseteq H_*(S/\!/2)$
with $\bar{\psi}P_* = \F_2\otimes H_*(S/\!/2)/J$.
A standard argument shows that
\[
\pi_*(S/\!/2)\iso P_*
= \Ext^{0,*}_{\mathcal{A}_*}(\F_2,H_*(S/\!/2))
\subseteq H_*(S/\!/2),
\]
and in fact as a spectrum $S/\!/2$ is weakly
equivalent to a wedge of suspensions of $H\F_2$,
and a choice of basis for $P_*$ determines such
a splitting.

We remark that any connective commutative $S$-algebra
$E$ for which $0=2\in\pi_0(E)$ admits a morphism
of commutative $S$-algebras $u\:S/\!/2\to E$. Using
the commutative diagram of $\F_2$-algebras
\[
\xymatrix{
H_*(S/\!/2) \ar@{ >->}[r]^(.45){\psi}\ar[d]_{u_*}
 & \mathcal{A}_*\otimes H_*(S/\!/2)\ar[d]^{I\otimes u_*}  \\
H_*(E) \ar@{ >->}[r]^(.45){\psi} & \mathcal{A}_*\otimes H_*(E)
}
\]
we see that
\[
\psi(u_*X_s) = 1\otimes u_*X_s + \zeta_1\otimes u_*X_{s-1}^2
   + \zeta_2\otimes u_*X_{s-2}^{2^2} + \cdots
   + \zeta_{s}\otimes u_*X_1^{2^{s}}
   + \zeta_{s+1}\otimes 1
\]
so the $u_*X_s$ is a sequence of algebraically independent
elements. It follows that there is an isomorphism of
$\mathcal{A}_*$-comodule algebras
\[
H_*(E) \iso \mathcal{A}_*\otimes H_*(E)/(u_*X_s:s\geq0),
\]
so $H_*(E)$ is also an extended comodule and $E$ is
weakly equivalent to a wedge of suspensions of $H\F_2$.
This gives a different approach to proving Steinberger's
result~\cite[theorem~III.4.1]{LNM1176} which potentially
contains more information on the multiplicative structure
of the splitting.

Rolf Hoyer has pointed out some explicit formulae for
primitives in $H_*(S/\!/2)$ and thus for families of
polynomial generators for $\pi_*(S/\!/2)$.

\section{Sample calculations for odd primes}
\label{sec:Sample-odd}

Now we assume that $p$ is an odd prime and that all
spectra are localised at $p$. There are similarities
to the $2$-primary case, although some of the details
are slightly more complicated.

Consider the commutative $S$-algebra $S/\!/p$ which is
the pushout in the diagram of commutative $S$-algebras
\[
\xymatrix{
& \ar[dl]\mathbb{P}S^0
 \ar@{}[dr]|{\PO}
 \ar@{ >->}[r]^{}\ar@{ >->}[d]
        & \mathbb{P}D^1\ar@{ >->}[d] \\
S &\ar@{->>}[l]_(.4){\sim} \tilde{S} \ar@{ >->}[r] & S/\!/p
}
\]
where the notation is similar to that in the case
$p=2$. Then $S/\!/p$ is cofibrant in $\mathscr{C}_S$,
and there is an isomorphism of commutative $S$-algebras
\[
S/\!/p\iso \tilde{S}\wedge_{\mathbb{P}S^0}\mathbb{P}D^1.
\]
Since the degree $p$ map induces the trivial map in mod~$p$
ordinary homology, $H_*(S/\!/p)=H_*(S/\!/p;\F_p)$ can be
determined by methods of~\cite[theorem~1.7]{BP-Einfinity}.
The answer is a free graded commutative algebra
\[
H_*(S/\!/p) =
\F_p\langle\dlQ^Ix_1:\text{$I$ admissible, $\exc(I)>1$}\rangle,
\]
where $x_1\in H_1(S/\!/p)$ satisfies $\beta x_1=1$.

We define two sequences of elements, beginning with $X_0=x_1$
and $Y_0=1$,
%\begin{align*}
%X_s &= \dlQ^{p^{s-1}}X_{s-1}
%  = \dlQ^{p^{s-1}}\dlQ^{p^{s-2}}\cdots\dlQ^{p}\dlQ^1x_1, \\
%Y_s &= \beta\dlQ^{p^{s-1}}X_{s-1}
%  = \beta\dlQ^{p^{s-1}}\dlQ^{p^{s-2}}\cdots\dlQ^{p}\dlQ^1x_1.
%\end{align*}
\[
X_s = \dlQ^{p^{s-1}}X_{s-1}
  = \dlQ^{p^{s-1}}\dlQ^{p^{s-2}}\cdots\dlQ^{p}\dlQ^1x_1,
  \;
Y_s = \beta\dlQ^{p^{s-1}}X_{s-1}
  = \beta\dlQ^{p^{s-1}}\dlQ^{p^{s-2}}\cdots\dlQ^{p}\dlQ^1x_1.
\]
Notice that the degrees of these elements are $|X_s|=2p^s-1$
and $|Y_s|=2(p^s-1)$.

We will again use the terminology of strictly allowable
introduced in Definition~\ref{defn:StrictAllow}.

\begin{lem}\label{lem:XsYs-p}
Let\/ $r\geq1$. Then the Dyer-Lashof monomial $\dlQ^rX_s$
is strictly allowable only if\/ $r=p^{s}$, while
$\dlQ^rY_s$ is never strictly allowable.
\end{lem}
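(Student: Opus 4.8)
The plan is to run the same play as in the proof of the prime-$2$ analogue, Lemma~\ref{lem:Xs}: for $s\geq1$ the elements $X_s=\dlQ^{p^{s-1}}X_{s-1}$ and $Y_s=\beta\dlQ^{p^{s-1}}X_{s-1}$ each have a well-defined leading Dyer-Lashof operation, and one plays the admissibility constraint on $\dlQ^r$ placed to the left of that operation against the excess constraint coming from strict allowability, using the degrees $|X_s|=2p^s-1$ and $|Y_s|=2(p^s-1)$ already recorded.

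Consider $\dlQ^rX_s$ first. Since $X_s=\dlQ^{p^{s-1}}\dlQ^{p^{s-2}}\cdots\dlQ^{1}x_1$ is itself admissible (each interior pair sits at the boundary case $p^{i}\leq p\cdot p^{i-1}$), the word $\dlQ^rX_s$ is admissible precisely when its leading pair $\dlQ^r\dlQ^{p^{s-1}}$ is, i.e., by the odd-primary Adem relations of~\cite{LNM1176}, precisely when $r\leq p\cdot p^{s-1}=p^s$. On the other hand strict allowability forces $\exc>|X_s|$, i.e., $2r>2p^s-1$; since $2r$ is even and $2p^s-1$ is odd this is equivalent to $r\geq p^s$. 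The two inequalities together give $r=p^s$, which is the first assertion.

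Now consider $\dlQ^rY_s$. Here the leading two operations are $\dlQ^r$ followed by $\beta\dlQ^{p^{s-1}}$, and the relevant odd-primary Adem relation for $\dlQ^a\beta\dlQ^b$ already applies when $a\geq pb$; hence admissibility forces the strict inequality $r<p\cdot p^{s-1}=p^s$, that is $r\leq p^s-1$. But strict allowability forces $\exc>|Y_s|=2(p^s-1)$, i.e., $2r>2(p^s-1)$, i.e., $r\geq p^s$. These two conditions are incompatible, so no $\dlQ^rY_s$ with $r\geq1$ is strictly allowable.

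The only delicate point, which is also where the content lies, is the asymmetry of the odd-primary admissibility conventions: the cutoff is $a\leq pb$ for $\dlQ^a\dlQ^b$ but only $a\leq pb-1$ for $\dlQ^a\beta\dlQ^b$. That single unit of slack, combined with the parity difference between $|X_s|$ and $|Y_s|$, is exactly what turns ``$r=p^s$'' in the $X$-case into ``no solution'' in the $Y$-case; so in a full write-up I would quote the Adem relations in the precise form that exhibits this asymmetry.
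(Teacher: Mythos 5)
Your proposal is correct and follows essentially the same argument as the paper: play the admissibility bound on the leading pair ($r\leq p^s$ for $\dlQ^r\dlQ^{p^{s-1}}$, $r<p^s$ for $\dlQ^r\beta\dlQ^{p^{s-1}}$) against the excess inequality $2r>|X_s|$ or $2r>|Y_s|$. The paper's proof is just a terser version of yours; your explicit remarks on the interior admissibility of $X_s$ and the $\beta$-shift in the admissibility convention are exactly the points the paper leaves implicit.
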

\begin{proof}
If $\dlQ^rX_s=\dlQ^r\dlQ^{p^{s-1}}X_{s-1}$ is admissible
then $r\leq p^{s}$, while the required excess condition
is $2r>2p^{s}-1$.

If $\dlQ^rY_s=\dlQ^r\beta\dlQ^{p^{s-1}}X_{s-1}$ is
admissible then $r<p^{s}$, while the excess condition
required for it to be strictly allowable is $2r>2(p^s-1)$.
Clearly these conditions are contradictory.
\end{proof}

\begin{prop}\label{prop:psi(Xs,Ys)-p}
The left coaction on $X_s$ and $Y_s$ is given by
\begin{subequations}\label{eq:psi(Xs,Ys)-p}
\begin{align}\label{eq:psi(Xs)-p}
\psi X_s &=
1\otimes X_s
   + \btau_0\otimes Y_{s}
   + \btau_1\otimes Y_{s-1}^p
   + \btau_2\otimes Y_{s-2}^{p^2} + \cdots
   + \btau_{s-1}\otimes Y_1^{p^{s-1}}
   + \btau_{s}\otimes 1, \\
\label{eq:psi(Ys)-p}
\psi Y_s &= 1\otimes Y_s + \zeta_1\otimes Y_{s-1}^p
   + \zeta_2\otimes Y_{s-2}^{p^2} + \cdots
   + \zeta_{s-1}\otimes Y_1^{p^{s-1}}
   + \zeta_{s}\otimes 1.
\end{align}
\end{subequations}
%Hence the ideal $J=(X_s,Y_{s+1}:s\geq0)\lhd H_*(S/\!/p)$
%is invariant under the coaction $\psi$.
\end{prop}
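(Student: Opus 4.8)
The plan is to dualize and argue by induction on $s$, closely following the $2$-primary argument of Proposition~\ref{prop:psi(Xs)}. First I would pass to the right coaction $\tpsi$: under the correspondence $\alpha\otimes x\leftrightarrow(-1)^{|\alpha|\,|x|}x\otimes\chi\alpha$ of Section~\ref{sec:EM}, which sends $\btau_j\mapsto\tau_j$ and $\zeta_j\mapsto\xi_j$ and introduces only trivial signs because every $Y_{s-j}^{p^j}$ and $\xi_j$ has even degree, the assertion is equivalent to
\begin{align*}
\tpsi X_s &= X_s\otimes 1 + \sum_{j=0}^{s} Y_{s-j}^{p^j}\otimes\tau_j, &
\tpsi Y_s &= Y_s\otimes 1 + \sum_{j=1}^{s} Y_{s-j}^{p^j}\otimes\xi_j,
\end{align*}
with the convention $Y_0=1$. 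The base case $s=0$ is immediate: $\tpsi Y_0=1\otimes 1$, while $\tpsi x_1=x_1\otimes 1+1\otimes\tau_0$ follows from $\beta x_1=1$ together with $H_0(S/\!/p)=\F_p$ and $H_1(S/\!/p)=\F_p\{x_1\}$.

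For the inductive step I would exploit $X_{s+1}=\dlQ^{p^s}X_s$ and $Y_{s+1}=\beta\dlQ^{p^s}X_s$ and feed the known $\tpsi X_s$ into Theorem~\ref{thm:CoactionDLp}. The crucial observation is that $|X_s|=2p^s-1$ is odd, so $\lceil|X_s|/2\rceil=p^s$ and $\dlQ^{p^s},\beta\dlQ^{p^s}$ are the bottom operations on $X_s$; hence, on extracting the coefficient of $t_+^{p^s(p-1)}$ (respectively $t_+^{p^s(p-1)-1}t_-$) from the generating-function identities of Theorem~\ref{thm:CoactionDLp}, every sum collapses to a single term, the powers of $\omega$ cancelling because all exponents in sight are multiples of $p-1$. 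This leaves
\[
\tpsi Y_{s+1}=\beta\dlQ^{p^s}(\tpsi X_s),\qquad
\tpsi X_{s+1}=\dlQ^{p^s}(\tpsi X_s)-(1\otimes\btau_0)\,\tpsi Y_{s+1},
\]
the factor $1\otimes\btau_0$ and the sign $(-1)^{|X_s|}=-1$ arising from the two odd-primary twist terms in Theorem~\ref{thm:CoactionDLp}.

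It then remains to evaluate $\dlQ^{p^s}$ and $\beta\dlQ^{p^s}$ on $\tpsi X_s=X_s\otimes 1+\sum_j Y_{s-j}^{p^j}\otimes\tau_j$ by the signed Cartan formula. Every Cartan sum reduces to one term by elementary input: $\dlQ^k(v^{p^j})$ vanishes unless $p^j\mid k$, in which case it equals $(\dlQ^{k/p^j}v)^{p^j}$, while $\beta\dlQ^k(v^{p^j})=0$ (both because $\dlQ_t$ is a ring homomorphism and Frobenius is additive mod~$p$); $\dlQ^0$ annihilates positive-degree classes; the restriction law gives $\dlQ^{|v|/2}v=v^p$; and $\dlQ^b\tau_j=0$ for $b<p^j$. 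Feeding in Lemma~\ref{lem:DL-xi-tau-p} (namely $\dlQ^{p^j}\tau_j=\tau_{j+1}-\tau_0\xi_{j+1}$ and $\beta\dlQ^{p^j}\tau_j=\xi_{j+1}$) yields
\[
\beta\dlQ^{p^s}(\tpsi X_s)=Y_{s+1}\otimes 1+\sum_{j=0}^{s}Y_{s-j}^{p^{j+1}}\otimes\xi_{j+1},\quad
\dlQ^{p^s}(\tpsi X_s)=X_{s+1}\otimes 1+\sum_{j=0}^{s}Y_{s-j}^{p^{j+1}}\otimes(\tau_{j+1}-\tau_0\xi_{j+1}).
\]
The first of these is already the desired $\tpsi Y_{s+1}$ after reindexing. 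For the second, using $\btau_0=-\tau_0$ one gets $-(1\otimes\btau_0)\tpsi Y_{s+1}=Y_{s+1}\otimes\tau_0+\sum_{j=0}^{s}Y_{s-j}^{p^{j+1}}\otimes\tau_0\xi_{j+1}$, and adding this to $\dlQ^{p^s}(\tpsi X_s)$ cancels all $\tau_0\xi_{j+1}$-terms in pairs, leaving $\tpsi X_{s+1}=X_{s+1}\otimes 1+Y_{s+1}\otimes\tau_0+\sum_{j=1}^{s+1}Y_{(s+1)-j}^{p^j}\otimes\tau_j$. Transcribing back through the correspondence of the first paragraph then gives the stated formulas for $\psi$.

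I expect the main obstacle to be controlling the two reductions just sketched: showing that the generating-function sums of Theorem~\ref{thm:CoactionDLp} genuinely collapse (tracking the $\omega$-twists and isolating which term of $\btau(\omega\zeta(\omega^{-1}t_+),\cdot)$ contributes), and verifying the dimension inequalities that pin each Cartan sum to a single summand — in particular the clean vanishings $\dlQ^0\tau_0=0$ and $\beta(v^{p})=0$, which are exactly what prevents spurious terms from appearing. The cancellation of the $\tau_0\xi_{j+1}$-terms at the end is the odd-primary counterpart of the telescoping with the correction term $\xi_1\xi_n^2$ in the proof of Proposition~\ref{prop:psi(Xs)}.
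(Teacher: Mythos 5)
Your proposal is correct and follows essentially the same route as the paper: dualize to the right coaction, induct on $s$ via $\tpsi X_{s+1}=\dlQ^{p^s}(\tpsi X_s)-\beta\dlQ^{p^s}(\tpsi X_s)(1\otimes\btau_0)$, collapse each Cartan sum using the instability and restriction axioms together with Lemma~\ref{lem:DL-xi-tau-p}, and cancel the $\tau_0\xi_{j+1}$-terms. Your explicit justification of the collapse of Theorem~\ref{thm:CoactionDLp} to two terms, and the observation that $\tpsi Y_{s+1}=\beta\dlQ^{p^s}(\tpsi X_s)$ disposes of the $Y$-formula simultaneously, are welcome refinements of detail but not a different argument.
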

\begin{proof}
Translating the formulae into statements about
the right coaction we must prove that the
following equations are satisfied for every~$s$:
\begin{align*}
\tpsi X_s &=
X_s\otimes1
   + Y_{s}\otimes\tau_0
   + Y_{s-1}^p\otimes\tau_1
   + Y_{s-2}^{p^2}\otimes\tau_2 + \cdots
   + Y_1^{p^{s-1}}\otimes\tau_{s-1}
   + 1\otimes\tau_{s}, \\
\tpsi Y_s &= Y_s\otimes1 + Y_{s-1}^p\otimes\xi_1
   + Y_{s-2}^{p^2}\otimes\xi_2 + \cdots
   + Y_1^{p^{s-1}}\otimes\xi_{s-1}
   + 1\otimes\xi_{s}.
\end{align*}
Assuming these are true for some $s$, we have
\begin{align*}
\tpsi X_{s+1} =&\; \tpsi\dlQ^{p^{s}}X_{s} \\
=&\; \dlQ^{p^{s}}(\tpsi X_{s})
     - \beta\dlQ^{p^{s}}(\tpsi X_{s})\btau_0 \\
=&\;
\dlQ^{p^{s}}
\biggl(X_s\otimes1 + Y_{s}\otimes\tau_0 + Y_{s-1}^p\otimes\tau_1
   + Y_{s-2}^{p^2}\otimes\tau_2 + \cdots
   + Y_1^{p^{s-1}}\otimes\tau_{s-1}
   + 1\otimes\tau_{s}\biggr) \\
&  + \beta\dlQ^{p^{s}}\biggl(X_s\otimes1
   + Y_{s}\otimes\tau_0
   + Y_{s-1}^p\otimes\tau_1
   + Y_{s-2}^{p^2}\otimes\tau_2
   + \cdots
   + Y_1^{p^{s-1}}\otimes\tau_{s-1}
   + 1\otimes\tau_{s}\biggr)\tau_0 \\
=&\;
\biggl(\dlQ^{p^{s}}X_s\otimes1
  + (\dlQ^{p^{s}-1}Y_{s})\otimes\dlQ^1\tau_0
  + (\dlQ^{p^{s-1}-1}Y_{s-1})^p\otimes\dlQ^{p}\tau_1 \\
& + (\dlQ^{p^{s-2}-1}Y_{s-2})^{p^2}\otimes\dlQ^{p^2}\tau_2
+ \cdots
+ (\dlQ^{p-1}Y_{1})^{p^{s-1}}\otimes\dlQ^{p^{s-1}}\tau_{s-1}
   + 1\otimes\dlQ^{p^{s}}\tau_{s}\biggr) \\
&  + \biggl(\beta\dlQ^{p^{s}}X_s\otimes1
   + (\dlQ^{p^{s}-1}Y_{s})\otimes\beta\dlQ^{1}\tau_0
   + (\dlQ^{p^{s-1}-1}Y_{s-1})^p\otimes\beta\dlQ^{p}\tau_1 \\
&  + (\dlQ^{p^{s-2}-1}Y_{s-2})^{p^2}\otimes\beta\dlQ^{p^2}\tau_2
   + \cdots
   + (\dlQ^{p-1}Y_1)^{p^{s-1}}\otimes\beta\dlQ^{p^{s-1}}\tau_{s-1}
   + 1\otimes\beta\dlQ^{p^{s}}\tau_{s}\biggr)\tau_0 \\
=&\;
\biggl(X_{s+1}\otimes1 + Y_{s}^{p}\otimes(\tau_1 - \tau_0\xi_1)
+ Y_{s-1}^{p^2}\otimes(\tau_2 - \tau_0\xi_2)
+ Y_{s-2}^{p^3}\otimes(\tau_3 - \tau_0\xi_3) \\
& \quad\quad\quad\quad\quad\quad\quad\quad\quad\quad\quad\quad\quad\quad\quad
   + \cdots + Y_{1}^{p^{s}}\otimes(\tau_{s} - \tau_0\xi_{s})
   + 1\otimes(\tau_{s+1} - \tau_0\xi_{s+1})\biggr) \\
&  + \biggl(Y_{s+1}\otimes1
   + Y_{s}^{p}\otimes\xi_1
   + Y_{s-1}^{p^2}\otimes\xi_2
   + Y_{s-2}^{p^3}\otimes\xi_3
   + \cdots + Y_1^{p^{s}}\otimes\xi_{s}
   + 1\otimes\xi_{s+1}\biggr)\tau_0 \\
=&\;
X_{s+1}\otimes1 + Y_{s+1}\otimes\tau_0
+ Y_{s}^p\otimes\tau_1
+ Y_{s-1}^{p^2}\otimes\tau_2
+ \cdots +
Y_{1}^{p^{s}}\otimes\tau_{s}
+ 1\otimes\tau_{s+1}.
\end{align*}
A similar calculation shows that
\[
\tpsi Y_{s+1} = Y_{s+1}\otimes1 + Y_{s}^p\otimes\xi_1
 + Y_{s-1}^{p^2}\otimes\xi_2 + \cdots
 + Y_2^{p^{s-1}}\otimes\xi_{s-1} + Y_1^{p^{s}}\otimes\xi_{s}
 + 1\otimes\xi_{s+1}.
\]
The result follows by Induction.
\end{proof}

Let $J=(X_s,Y_{s+1}:s\geq0)\lhd H_*(S/\!/p)$ be the
ideal generated by the elements $X_s,Y_s$. As happens
for the prime~$2$, the following composition of left
$\mathcal{A}_*$-comodule algebra homomorphisms
\[
\xymatrix{
H_*(S/\!/p) \ar[r]_(.4){\psi}\ar@/^18pt/[rr]^{\bar{\psi}}
& \mathcal{A}_*\otimes H_*(S/\!/p)\ar[r]_(.47){\mathrm{quo}}
& \mathcal{A}_*\otimes H_*(S/\!/p)/J
}
\]
is an isomorphism, where the second and third terms are
extended left comodules. Here $H_*(S/\!/p)/J$ is a free
graded commutative algebra since the generators $X_s$
and $Y_s$ are amongst the generators of the free graded
commutative algebra $H_*(S/\!/p)$. There is a subalgebra
$P_*\subseteq H_*(S/\!/p)$ which is identified with
$\F_p\otimes H_*(S/\!/p)/J$ under the isomorphism $\bar{\psi}$,
i.e., $\bar{\psi}P_* = \F_p\otimes H_*(S/\!/p)/J$. A
standard argument shows the spectrum $S/\!/p$ is equivalent
to a wedge of suspensions of $H\F_p$. As we saw in the
$2$-primary case, this leads to a proof of
Steinberger's result~\cite[theorem~III.4.1]{LNM1176}.

\end{document}